\providecommand{\U}[1]{\protect\rule{.1in}{.1in}}
\newtheorem{theorem}{Theorem}
\newtheorem{corollary}[theorem]{Corollary}
\newtheorem{definition}[theorem]{Definition}
\newtheorem{example}[theorem]{Example}
\newtheorem{lemma}[theorem]{Lemma}
\newtheorem{proposition}[theorem]{Proposition}
\newtheorem{remark}[theorem]{Remark}
\newtheorem{assumption}[theorem]{Assumption}
\newtheorem{idea}[theorem]{Idea}
\numberwithin{theorem}{section}
\numberwithin{equation}{section}
\begin{document}
\title[Roots of polynomials under repeated differentiation]{Roots of polynomials under repeated differentiation and repeated applications
of fractional differential operators }
\author{Brian C. Hall}
\address{Brian C. Hall: University of Notre Dame, Notre Dame, IN 46556, USA }
\email{bhall@nd.edu}
\author{Ching-Wei Ho}
\address{Ching-Wei Ho: Institute of Mathematics, Academia Sinica, Taipei 10617, Taiwan }
\email{chwho@gate.sinica.edu.tw}
\author{Jonas Jalowy}
\address{Jonas Jalowy: Institut f\"ur Mathematische Stochastik, Westf\"alische
Wilhelms-Universit\"at\linebreak M\"unster, Orl\'eans-Ring 10, 48149
M\"unster, Germany}
\email{jjalowy@uni-muenster.de}
\author{Zakhar Kabluchko}
\address{Zakhar Kabluchko: Institut f\"ur Mathematische Stochastik, Westf\"alische
Wilhelms-Universit\"at M\"unster, Orl\'eans-Ring 10, 48149 M\"unster, Germany}
\email{zakhar.kabluchko@uni-muenster.de}
\keywords{Random polynomials, complex zeros, empirical distribution of zeros, weak
convergence, differential operators, circular law, Hamilton--Jacobi PDEs, Weyl
polynomials, Littlewood--Offord polynomials, Kac polynomials, logarithmic
potential theory, free probability, hydrodynamic limit}
\subjclass[2020]{Primary: 30C15; Secondary: 35F21, 60B20, 30C10, 60G57, 31A05, 60B10,
30D20, 46L54}

\begin{abstract}
We start with a random polynomial $P^{N}(z)$ of degree $N$ with independent
coefficients. We then consider a new polynomial $P_{t}^{N}$ obtained by $\lceil Nt\rceil$ applications
of a fractional 
differential operator of the form $z^{a}
(d/dz)^{b},$ where $a$ and $b$ are real numbers. When $b>0,$ we compute the
limiting root distribution $\mu_{t}$ of $P_{t}^{N}$ as $N\rightarrow\infty.$
We show that $\mu_{t}$ is the push-forward of the limiting root distribution
of $P^{N}$ under a transport map $T_{t}$. The map $T_{t}$ is defined by
flowing along the characteristic curves of a PDE satisfied by the log
potential of $\mu_{t}.$ 

In the special case of repeated differentiation, our
results may be interpreted as saying that the roots evolve radially
\textit{with constant speed} until they hit the origin, at which point, they
cease to exist. For general $a$ and $b,$ the transport map $T_{t}$ has a free
probability interpretation as multiplication of an $R$-diagonal operator by an
$R$-diagonal \textquotedblleft transport operator.\textquotedblright As an application, 
we obtain a push-forward characterization of the free
self-convolution semigroup $\oplus$ of radial measures on $\mathbb{C}$. 

We also consider the case $b<0,$ which includes the case of repeated integration.
More complicated behavior of the roots can occur in this case.

\end{abstract}
\maketitle
\tableofcontents

\section{Introduction}

In this paper, we return to the much-studied question of the evolution of
zeros of high-degree polynomials under repeated differentiation. We also consider the
evolution of zeros under repeated applications of a differential operator of
the form%
\begin{equation}
z^{a}\left(  \frac{d}{dz}\right)  ^{b}. \label{TheOp}%
\end{equation}
For now, the reader may think that $a$ and $b$ are non-negative integers,
although we will eventually allow greater generality. We take the initial
polynomial to be a random polynomial with independent coefficients, of the
sort studied by Kabluchko and Zaporozhets in \cite{KZ}, in which case the
empirical root distribution of the initial polynomial will be asymptotically radial.

We propose that under repeated applications of the operator in (\ref{TheOp}),
the zeros will move approximately under certain \textit{explicit} curves,
depending on the limiting distribution of zeros of the initial polynomial. See
Idea \ref{abMotion.idea} for the precise formula. In the case $a=0$, $b=1$ of
repeated differentiation, our proposal says that the zeros should evolve
radially inward \textit{with constant speed}; see Idea \ref{radialMotion.idea}. We then establish our general
proposal rigorously at the bulk level. This means that we describe the
limiting distribution of zeros of the evolved polynomial as a push-forward of
the limiting distribution of zeros of the original polynomial, under a map
given by the formulas in Idea \ref{abMotion.idea}. Even in the case of
repeated differentiation, this result is new (Theorem \ref{pushRepeated.thm}).

The results of this paper are in the same spirit as in our paper
\cite{HHJK2}, which studies the evolution of zeros of random polynomials
under the heat flow. Both papers present an explicit proposal for how
the zeros move and establish the result rigorously at the bulk level. (See
Theorem 3.3 in \cite{HHJK2} and see also the earlier paper \cite{HeatConj} by
Hall and Ho.) Furthermore, in both cases, the
log potential of the limiting root distribution satisfies a PDE and the
proposed motion of the zeros is along the characteristic curves of the PDE. In
the present paper, the bulk result (Corollary \ref{push.cor}) establishes a
push-forward under a transport map in agreement with Idea \ref{abMotion.idea},
whose trajectories are the characteristic curves of the relevant PDE
(Proposition \ref{charCurves.prop}). Lastly, in Sections
\ref{fracConvolve.sec} and \ref{freeProbInterpret.sec}, we interpret our
results in terms of free probability. (Compare Section 6 in \cite{HHJK2} in
the setting of polynomials undergoing the heat flow.)

\subsection{Prior results on repeated differentiation\label{prior.sec}}

We begin with a basic definition.

\begin{definition}\label{rootmeasure.def}
If $P$ is a nonconstant polynomial in one complex variable, the \textbf{empirical root measure} of
$P$ is the probability measure on $\mathbb{C}$ given by%
\[
\frac{1}{\deg(P)}\sum_{j=1}^{\deg(P)}\delta_{z_{j}},
\]
where $z_{1},\ldots,z_{\deg(P)}$ are the roots of $P,$ listed with their
multiplicity. If $P^{N}$ is a sequence of random polynomials with $\deg(P^N)\rightarrow\infty$, we say that a
(deterministic) probability measure $\mu$ is the \textbf{limiting root
distribution} of $P^{N}$ if the (random) empirical root measure of $P^N$ converges weakly
in probability to $\mu$ as $N\rightarrow\infty$.
\end{definition}

Let $P^{N}$ be such a sequence of random polynomials with $\deg(P^N)=N$ and with limiting root
distribution $\mu.$ The relationship between the zeros of $P^{N}$ and the
zeros of its derivative $dP^{N}/dz$ has been investigated in the physics
literature by Dennis and Hannay \cite{DenHan} and in the mathematics
literature by Pemantle and Rivin~\cite{pemantlerivin},
Subramanian~\cite{subramanian}, Hanin \cite{Han1,Han2}, Kabluchko
\cite{KabluchkoSingleDiff,KabluchkoRepDiff}, O'Rourke~\cite{orourke}, Totik \cite{Totik}, O'Rourke
and Williams~\cite{orourkewilliams}, Kabluchko and Seidel
\cite{KabluchkoSeidel}, Byun, Lee, and Reddy \cite{BLR}, Michelen and Vu
\cite{MichVu}, and Angst, Malicet, and Poly \cite{AMP}. In these works, the
following idea emerges.

\begin{idea}
\label{singleDeriv.idea} Suppose $P^N$ is a sequence of polynomials with limiting root distribution $\mu$ and fix some large value of $N$. Then, upon applying a single derivative, a root $z$ of
$P^{N}$ should move by an amount approximately equal to $1/N$ times the
negative reciprocal of the Cauchy transform of $\mu$ at $z.$
\end{idea}

See, for example, the discussion preceding Conjecture 1.1 in
\cite{KabluchkoRepDiff} or Theorem 2.8 in \cite{orourkewilliams}. For the
readers convenience, we motivate this idea here, following
\cite{KabluchkoRepDiff}.

\begin{proof}
[Heuristic derivation of Idea \ref{singleDeriv.idea}]Denote the zeros of
$P^{N}$ by $z_{1},\ldots,z_{N}$ and let $m(z)$ be the Cauchy transform of the
limiting root distribution $\mu,$ given by%
\begin{equation}
m(z)=\int_{\mathbb{C}}\frac{1}{z-w}~d\mu(w). \label{CauchyDef}%
\end{equation}
We easily compute that%
\begin{equation}
\frac{dP^{N}/dz}{P^{N}(z)}=\sum_{j=1}^{N}\frac{1}{z-z_{j}}. \label{LogDerivP1}%
\end{equation}
If $z$ is close to one of the zeros of $P^{N}$ ---say, the zero $z_{1}$--- the
$j=1$ term on the right-hand side of (\ref{LogDerivP1}) will be larger than
all the others. The remaining terms may be approximated by $N$ times the
Cauchy transform of $\mu$ at $z\approx z_{1}.$ Thus, we expect that%
\begin{equation}
\frac{dP^{N}/dz}{P(z)}\approx\frac{1}{z-z_{1}}+Nm(z_{1}),\quad z\approx z_{1}.
\label{LogDerivP2}%
\end{equation}
Setting the right-hand side of (\ref{LogDerivP2}) equal to zero and solving
for $z$ gives%
\[
z=z_{1}-\frac{1}{Nm(z_{1})}.
\]
This value is the approximate location of a zero of $(dP^{N}/dz)/P^{N}(z)$ and
thus, also, of $dP^{N}/dz.$
\end{proof}

Steinerberger \cite{Stein} then investigated the evolution of polynomials with
real roots under \textit{repeated} differentiation, where the number of
derivatives is proportional to the degree of the polynomial. He introduced a
nonlocal PDE that was conjectured to describe the evolution of the density of
roots. Meanwhile, work of Bercovici and Voiculescu \cite{BV}, further
developed by Nica and Speicher \cite{NicaSpeicher} and Shlyakhtenko and Tao
\cite{STao}, introduced the concept of \textquotedblleft fractional free
convolution,\textquotedblright\ which turned out to describe precisely the
evolution of the density of zeros in Steinerberger's work. Specifically, the
PDE\ in \cite[Eq. (3.6)]{STao} is the same as the one in Steinerberger's work,
up to a scaling. Steinerberger's conjecture was then established rigorously in
work of Hoskins and Kabluchko \cite{HoskinsKabluchko}. A different proof was
given by Arizmendi, Garza-Vargas, and Perales \cite{AGP} using the method of
\textquotedblleft finite free convolution\textquotedblright\ introduced by
Marcus, Spielman, and Srivastava \cite{MSS,Mar}, which then was generalized by Jalowy, Kabluchko and Marynych \cite{JKM1,JKM2} to differential operators including \eqref{TheOp}.

Various authors have then investigated the evolution of zeros under repeated
differentiation when the roots are not real, mainly in the case where the
zeros have an asymptotically radial distribution. Feng and Yao \cite{FengYao}
determined the limiting root distribution for repeated differentiation of
random polynomials with independent coefficients (as in \cite{KZ}). These
polynomials have the property that the limiting root distribution is
rotationally invariant. O'Rourke and Steinerberger \cite{OS} then proposed a
nonlocal PDE for random polynomials whose \textit{roots} (not coefficients)
are i.i.d. with a radial distribution. Hoskins and Kabluchko
\cite{HoskinsKabluchko} then verified that the O'Rourke--Steinerberger PDE
holds in the setting of polynomials with independent coefficients (which, we
note, was not the setting that O'Rourke and Steinerberger considered). Further
work on the evolution of zeros under repeated differentiation was done by
Alazard, Lazar, and Nguyen \cite{ALN}, Kiselev and Tan \cite{KT}, B{\o }gvad,
H{\"{a}}gg and Shapiro \cite{bogvadhaeggshapiro}, Kabluchko
\cite{KabluchkoRepDiff}, Galligo \cite{Galligo}, and Galligo, Najnudel, and Vu \cite{GNV}.

Recent work of Campbell, O'Rourke, and Renfrew \cite{COR} has given an
interpretation of the evolution of zeros in terms of fractional free
convolution for $R$-diagonal operators, analogous to the fractional free
convolution for self-adjoint operators in \cite{BV,NicaSpeicher,STao}. We will
discuss this result further in Section \ref{fracConvolve.sec}.

\subsection{New results on repeated differentiation in the radial
case\label{newResults.sec}}

Although much work has been done on repeated differentiation in the radial
case, one question has remained unanswered, which is to give an explicit
formula for how the zeros move. We propose such a formula here. We note that
since the number of zeros decreases with the number of derivatives, any
description of how the zeros move must include a mechanism for zeros to
\textquotedblleft die\textquotedblright\ at a certain point.

Let $P_{0}^{N}$ be a random polynomial with independent coefficients, as in
\cite{KZ}, and let $\mu_{0}$ be the limiting root distribution of $P_{0}^{N}.$
(Precise assumptions will be stated in Section \ref{PNandQN.sec}.) In that case, $\mu_0$ will be a 
rotationally invariant measure on the plane; see Theorem \ref{KZ.thm} below. Furthermore, essentially all compactly 
supported, rotationally invariant probability measures $\mu_0$ (subject to very mild conditions) occur as the limiting root measure for some 
choice of the random polynomials $P^N_0$.

Although we defer the details of these random polynomials to Section \ref{PNandQN.sec}, we give one explicit example here.

\begin{example}\label{Weyl.example}
The random Weyl polynomials are given by 
\begin{equation}\label{weyldef}
P^N(z)=\sum_{j=0}^N\xi_j\frac{(\sqrt{N}z)^j}{\sqrt{j!}}
\end{equation}
where the $\xi_j$'s are nonconstant i.i.d. random variables with finite log moment. Then
as a special case of a result of Kabluchko and Zaporozhets \cite{KZ} (see Theorem 3.4
and Example 3.3 below), these polynomials have limiting root distribution equal to
the uniform probability measure on the unit disk.
\end{example}

Then let
$P_{t}^{N}$ be the $\lceil Nt\rceil$-th derivative of $P_{0}^{N}$ and let
$\mu_{t}$ be the limiting root distribution of $P_{t}^{N},$ for $0\leq t<1,$
where by Definition \ref{rootmeasure.def}, $\mu_{t}$ is a probability measure. Using results of Feng and Yao
\cite[Theorem 5(2)]{FengYao}, this measure can be computed more-or-less explicitly; 
it is absolutely continuous with respect to the Lebesgue measure and
is rotationally invariant. Let $m_{t}$ be the Cauchy transform of $\mu_{t},$ as in \eqref{CauchyDef}.
Since $\mu_{t}$ is absolutely continuous and rotationally invariant, it is not hard to see
that $zm_{t}(z)$ is always a non-negative real number, namely
\begin{equation}
zm_{t}(z)=\mu_{t}(D_{\left\vert z\right\vert }),\label{zmt}%
\end{equation}
where $D_{r}$ denotes the closed disk of radius $r$ centered at 0.%

\begin{figure}[ptb]%
\centering
\includegraphics[scale=0.3]
{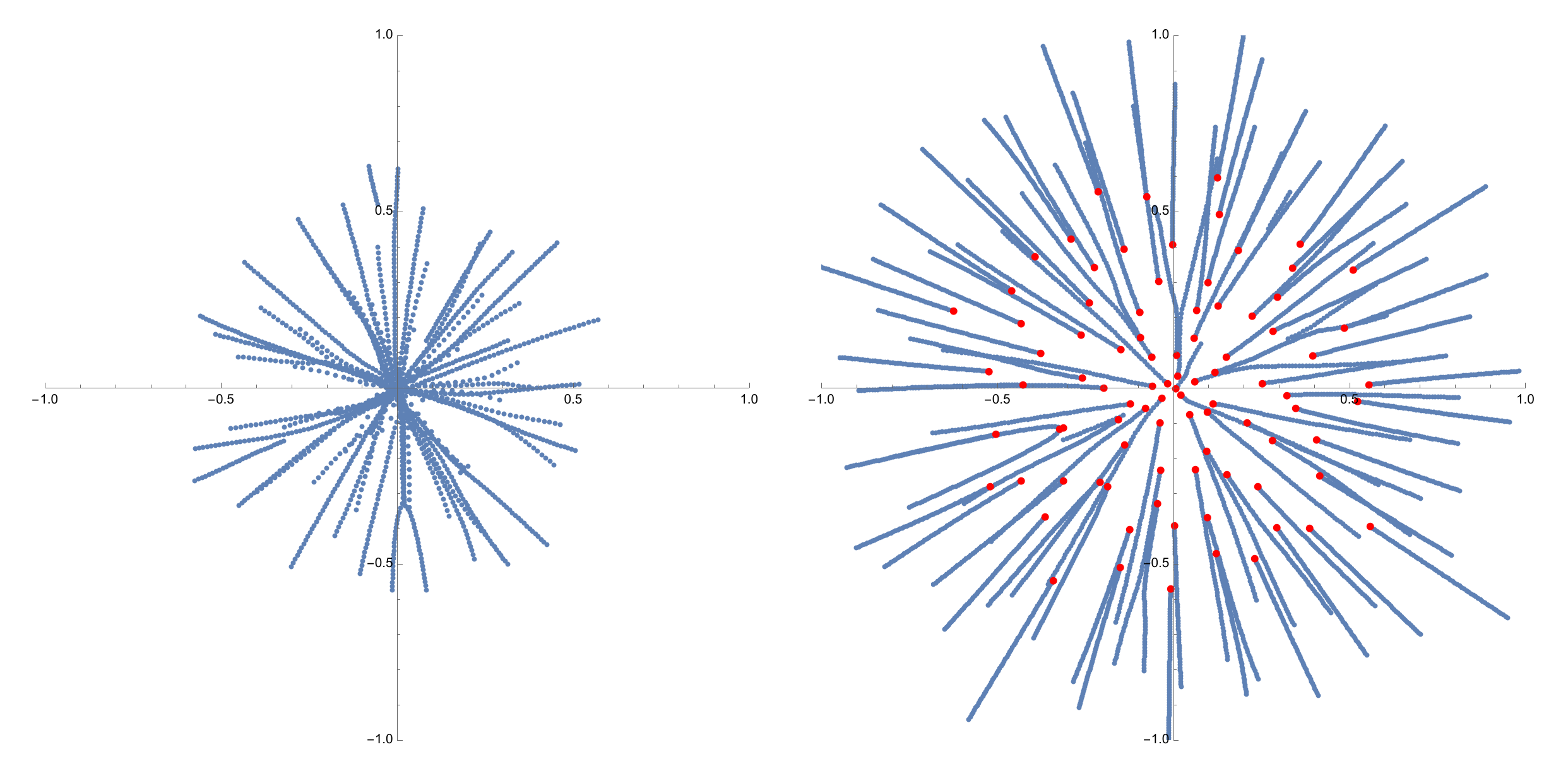}%
\caption{The smaller roots (left) travel radially at constant speed until they
hit the origin and die before time $t.$ The larger roots (right) travel
radially at constant speed without hitting the origin. The blue dots show the
roots of all the polynomials with time $s<t$, while the red dots show the
roots with time $t$. Shown for $t=0.4$ starting from a Weyl polynomial (Example \ref{Weyl.example}) with
$N=300.$}%
\label{bigandsmall.fig}%
\end{figure}

We can then state our proposal for how the zeros move ---and eventually die---
as follows.

\begin{idea}
\label{radialMotion.idea}Let $P_{0}^{N}$ be a random polynomial with
independent coefficients, as in \cite{KZ}, for some fixed, large $N$. Then
consider the roots of the $\lceil Nt\rceil$-th derivative $P_{t}^N$ of $P^N_0$ as a function of $t$, where a time-interval of size
$1/N$ corresponds to a single differentiation. Then each root $z_{0}$ of $P_0^N$ 
moves approximately radially inward \emph{at constant speed equal to }$-1/m_{0}(z_{0})$ 
until it hits the origin, at which point, it ceases to
exist. That is, the roots should approximately follow the curves%
\begin{equation}
z(t)=z_{0}-\frac{t}{m_{0}(z_{0})}=z_{0}\left(  1-\frac{t}{z_{0}m_{0}(z_{0}%
)}\right)  \label{ztGen}%
\end{equation}
for%
\begin{equation}
t<z_{0}m_{0}(z_{0}),\label{tCondition}%
\end{equation}
and the roots should cease to exist when $t\approx z_{0}m_{0}(z_{0}).$

In the case of the Weyl polynomials in Example \ref{LO.example}, $\mu_{0}$ is the uniform
measure on the unit disk and $m_{0}(z)=\bar{z}$ for all $z$ in the unit disk.
In that case, the roots should approximately follow the curves
\[
z(t)=z_{0}-\frac{t}{\bar{z}_{0}}%
\]
for $\left\vert z_{0}\right\vert <1$ and cease to exist when $t\approx
\left\vert z_{0}\right\vert ^{2}.$
\end{idea}

We remark that it is not immediately obvious how Idea \ref{radialMotion.idea} fits with Idea \ref{singleDeriv.idea}. Let $m_t$ denote the 
Cauchy transform of the limiting root distribution of $P_t^N$. If Idea \ref{singleDeriv.idea} holds, the roots should move along
curves $z(t)$ satisfying
\begin{equation}\label{veloc}
\frac{dz}{dt}= - \frac{1}{m_t(z(t))}.
\end{equation} 
Then if Idea \ref{radialMotion.idea} \textit{also} holds, the curves $z(t)$ of the form in \eqref{ztGen} must satisfy \eqref{veloc}. But the curves in \eqref{ztGen} have constant velocity: $dz/dt\equiv -1/m_0(z_0)$. 
Thus, for Ideas \ref{singleDeriv.idea} and \ref{radialMotion.idea} to be consistent, we must have that $m_t(z(t))$ is independent of $t$. That is to say, the Cauchy transform $m_t$, \textit{evaluated along the path of a single root}, must remain constant. 
We give a heuristic derivation of this claim in Appendix \ref{PDEderiv.appendix}.

The condition (\ref{tCondition}) states that a root starting at the point
$z_{0}$ will die before time $t$ precisely if $z_{0}m_{0}(z_{0})<t.$ Thus, the
zeros that die before time $t$ are those with magnitude less than $r,$ where
$r$ is the radius at which $z_{0}m_{0}(z_{0})$ equals $t.$ Thus, by
\eqref{zmt} with $t=0,$ the set of roots that die before time $t$ is
assigned mass $t$ by $\mu_{0},$ meaning that approximately $Nt$ roots die.
This is what we expect when applying $Nt$ derivatives to a polynomial of
degree $N.$

Note also that if $P_{0}^{N}$ is a random polynomial with independent
coefficients, then $P_{t}^{N}$ also has independent coefficients, so that its
distribution of zeros is still asymptotically radial. Thus, the Cauchy
transform of the limiting root distribution will vanish at the origin. Thus,
Idea \ref{singleDeriv.idea} becomes undefined for zeros very close to the
origin. It is therefore plausible that the origin should be the place where
the zeros die as we take repeated derivatives.

We will establish a rigorous version of Idea \ref{radialMotion.idea} at the
bulk level; see Theorem \ref{pushRepeated.thm}.

\begin{remark}
\label{charMotion.rem}One of the key ideas of this paper is that the log
potential $S(z,t)$ of the limiting root distribution of $P_t^N$ satisfies a simple \emph{local} PDE, which
can be solved by the method of characteristics. (See Section \ref{PDE.sec} and
Appendix \ref{PDEderiv.appendix}.) This PDE is to be contrasted
with the nonlocal PDE satisfied by the density of the measure. Idea
\ref{radialMotion.idea} may then be restated in a more fundamental way:
\textbf{The zeros should move approximately along the characteristic curves of
the relevant PDE}, until they reach the origin.
\end{remark}

We actually expect that a variant of Remark \ref{charMotion.rem} should hold in a more general setting, in which
$P_0^N$ is a sequence of (not necessarily random) polynomials of degree $N$ whose empirical root measures are 
converging to a fixed compactly supported probability measure $\mu_0$. The only modification needed to the statement 
is that instead of dying when they reach the origin, the roots will die when they reach a point where the Cauchy transform
of the limiting root distribution of $P_t^N$ is zero. A heuristic argument for this more general statement is given in Appendix 
\ref{PDEderiv.appendix}.

Establishing Idea \ref{radialMotion.idea} rigorously as stated is not easy. We
will, however, prove that the result holds \textit{at the level of the bulk
distribution} of zeros. We let
\[
\alpha_{0}(r)=\mu_{0}(D_{r}),
\]
where $D_{r}$ is the closed disk of radius $r$ centered at the origin. Then we
have the following result.

\begin{theorem}
[Push-forward Theorem for Repeated Differentiation]\label{pushRepeated.thm}Let
$P_{0}^{N}$ be a random polynomial with independent coefficients satisfying
precise assumptions stated in Section \ref{PNandQN.sec}. Let $P_{t}^{N}$ be
the $\lceil Nt\rceil$-th derivative of $P_{0}^{N}$ for $0\leq t<1,$ and let
$\mu_{t}$ be the limiting root distribution of $P_{t}^{N}.$ Assume continuity of the function
\[
r\mapsto\mu_0(D_r),
\]
where $D_r$ is the closed disk of radius $r$ centered at the origin. Let
\[
A_{t}=\left\{  w\in\mathbb{C}:\alpha_{0}(|w|)>t\right\}  .
\]
Define a transport map $T_{t}:A_t\to\mathbb{C}$ by the
right-hand side of (\ref{ztGen}), namely%
\begin{equation}
T_{t}(w)=w-\frac{t}{m_{0}(w)}. \label{TtDef}%
\end{equation}
Then we have the following result connecting $\mu_{t}$ to $\mu_{0}$:%
\[
\mu_{t}=\frac{1}{1-t}(T_{t})_{\#}\left(  \left.  \mu_{0}\right\vert _{A_{t}%
}\right)  ,
\]
where $(T_{t})_{\#}$ denotes push-forward by $T_{t}.$
\end{theorem}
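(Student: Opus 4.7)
My plan is to reduce the claimed identity to matching rotationally symmetric cumulative distribution functions, and then to evaluate both sides using the Hamilton--Jacobi theory of the PDE (\ref{diffPDE}).

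\textbf{Reduction to radial profiles.} Because $\mu_{0}$ is rotationally invariant we have $m_{0}(e^{i\theta}w)=e^{-i\theta}m_{0}(w)$, so the transport map $T_{t}$ of (\ref{TtDef}) commutes with rotations and the push-forward $\frac{1}{1-t}(T_{t})_{\#}(\mu_{0}|_{A_{t}})$ is rotationally invariant. The measure $\mu_{t}$ is also rotationally invariant, as already observed in the paper from the fact that $P_{t}^{N}$ again has independent coefficients. It therefore suffices to prove, for every $r>0$, the equality of disk masses
\[
\mu_{t}(D_{r})=\frac{1}{1-t}\,\mu_{0}\bigl(T_{t}^{-1}(D_{r})\cap A_{t}\bigr).
\]

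\textbf{Evaluating the two sides.} The rigorous version of (\ref{diffPDE}) for the large-$N$ limit $S$ of $S^{N}$, to be established in Section \ref{PDE.sec}, is a constant-coefficient Hamilton--Jacobi equation whose characteristics through $z_{0}$ are the straight lines $t\mapsto T_{t}(z_{0})$, along which $\partial_{z}S$ is preserved (Proposition \ref{charCurves.prop}). Together with the identification $(1-t)m_{t}=\partial_{z}S$ from (\ref{mtFromS}), this yields the key identity
\[
(1-t)\,m_{t}(T_{t}(z_{0}))=m_{0}(z_{0}),\qquad z_{0}\in A_{t}.
\]
Specializing to $z_{0}=\sigma>0$ with $\alpha_{0}(\sigma)>t$ and setting $r=T_{t}(\sigma)$, then applying (\ref{zmt}) at both $r$ and $\sigma$, one computes
\[
\mu_{t}(D_{r})=r\,m_{t}(r)=\frac{r\,m_{0}(\sigma)}{1-t}=\frac{\sigma m_{0}(\sigma)-t}{1-t}=\frac{\alpha_{0}(\sigma)-t}{1-t},
\]
where the third equality uses $r=\sigma-t/m_{0}(\sigma)$. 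On the other hand, the radial map $\rho\mapsto\rho(1-t/\alpha_{0}(\rho))$ is strictly increasing on $\{\rho:\alpha_{0}(\rho)>t\}$ (using that $\alpha_{0}$ is non-decreasing and continuous), so $T_{t}^{-1}(D_{r})\cap A_{t}$ is the annulus of radii in $(\rho_{t},\sigma]$, where $\rho_{t}$ is the threshold radius with $\alpha_{0}(\rho_{t})=t$. Its $\mu_{0}$-mass equals $\alpha_{0}(\sigma)-t$, so the two sides agree on the image of $T_{t}|_{A_{t}}$; continuity of $\alpha_{0}$ and monotonicity handle the remaining radii, where both sides equal $0$ for small $r$ and $1$ for large $r$.

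\textbf{Main obstacle.} The principal difficulty lies in the rigorous derivation of (\ref{diffPDE}) carried out in Section \ref{PDE.sec}: one must establish that $S^{N}$ converges to a sufficiently regular limit $S$ away from the origin, justify the Hamilton--Jacobi method (including well-posedness of the characteristic picture up to the "death time" $\alpha_{0}(|z_{0}|)=t$), and control the degeneracy of the equation at the origin, where the zeros disappear and $m_{0}$ vanishes. Once this analytic input is in hand, the push-forward identity follows essentially algebraically from the characteristic structure combined with rotational symmetry, as sketched above.
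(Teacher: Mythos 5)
Your proposal is correct in spirit, but it takes a genuinely different route from the paper's actual proof, and one that is arguably circular in the logical development of the paper.

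The paper proves Theorem~\ref{pushRepeated.thm} as a special case ($a=0$, $b=1$) of Theorem~\ref{push.thm}, and the proof of Theorem~\ref{push.thm} is carried out \emph{without any PDE input} (the paper says so explicitly after Remark~\ref{charMotion.rem}). Instead, the paper uses the explicit exponential-profile formula for $g_t$ (Theorem~\ref{QNprofile.thm}, established via Stirling/Gamma-function asymptotics) together with the Kabluchko--Zaporozhets description of the limiting root distribution (Theorem~\ref{KZ.thm}). The key observation is purely algebraic: Theorem~\ref{KZ.thm} expresses the radial part of each $\mu_t$ as the push-forward of the \emph{same} uniform measure on $[0,1]$ under the map $r_t(\alpha)=e^{-g_t'(\alpha)}$, while $\alpha_0$ takes the radial part of $\mu_0$ back to the uniform measure on $[0,1]$; then Corollary~\ref{rt.cor} identifies $r_t\circ\alpha_0$ with $T_t$. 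The PDE in Section~\ref{PDE.sec} and its Hamilton--Jacobi analysis are established \emph{afterward}, and the paper explicitly notes (Remark in Section~\ref{hj.sec}) that the PDE could give a different proof of the push-forward result, but declines to pursue it because the direct route is simpler.

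Your argument instead assumes the rigorous PDE (\ref{diffPDE}) and the characteristic picture, extracts the identity $(1-t)\,m_t(T_t(z_0))=m_0(z_0)$, and then matches radial CDFs. The numerical verification that $\mu_t(D_r)=(\alpha_0(\sigma)-t)/(1-t)$ and that $T_t^{-1}(D_r)\cap A_t$ has $\mu_0$-mass $\alpha_0(\sigma)-t$ is correct, as is the monotonicity of the radial transport map $\rho\mapsto\rho(1-t/\alpha_0(\rho))$ on $\{\alpha_0(\rho)>t\}$ (a product of a strictly increasing positive function and a nondecreasing positive one). Two caveats, however. First, in this paper the rigorous PDE (Theorem~\ref{thePDEforS.thm}) is itself proved from the explicit Legendre-type formula for $S_t$ coming from the same exponential-profile machinery, so invoking the PDE as the analytic engine does not actually save the Gamma-function work; if anything your route has more to establish. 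Second, the rigorous PDE requires the stronger regularity Assumption~\ref{pde.assumption} ($g_t\in C^2$ with $g_t''<0$, $r_{\mathrm{in}}=0$), whereas Theorem~\ref{push.thm} is proved under the weaker Assumption~\ref{r0.assumption} (mere strict concavity of $g_0$). So a proof running through the PDE would, as literally written, apply only under hypotheses strictly stronger than what the theorem asserts. You correctly identify the well-posedness of the Hamilton--Jacobi picture as the main obstacle; the paper simply sidesteps it.
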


Theorem \ref{pushRepeated.thm} may be interpreted as saying that the limiting
distribution of zeros of $P_{t}^{N}$ \textit{behaves as if} the zeros are
evolving as described in Idea \ref{radialMotion.idea}. More specifically, the
theorem should be interpreted as saying that the small roots of $P_{0}^{N}$
---those with $\left\vert z\right\vert \leq r_{t}$ for $\alpha_{0}(r_{t})=t$
--- die before time $t,$ while the big roots ---those with $\left\vert
z\right\vert >r_{t}$--- evolve according to the curve on the right-hand side
of (\ref{TtDef}).

\subsection{Flowing by fractional derivatives with powers of $z$%
\label{flowFractional.sec}}

We also establish similar results for repeated applications of fractional
differential operators of the form%
\begin{equation}
z^{a}\left(  \frac{d}{dz}\right)  ^{b},\label{abOp}%
\end{equation}
where $a$ and $b$ are real numbers, where the fractional derivative
$(d/dz)^{b}$ is defined on powers of $z$ by a straightforward extension of the
case when $b$ is a positive integer. (See Section \ref{fractional.sec}.) Let us assume for the moment that $a-b$
is rational with denominator $l.$ We then start with a polynomial $P_{0}^{N}$
and apply the operator in (\ref{abOp}) repeatedly, with the following
stipulation: Each time we apply the operator, \textit{we throw away any
negative powers of }$z$\textit{ that arise}. (Negative terms arise only when
$a<b.$) If we then apply (\ref{abOp}) $Nt$ times to a polynomial, assuming
that $Nt$ is an integer multiple of $l,$ we will obtain again a polynomial.
(After applying the operator $l$ times, all powers of $z$ will be integers and
negative powers of $z$ are killed by definition.) The procedure of throwing
away negative terms ensures that the general differential flow behaves
similarly to the case of repeated differentiation. See Remark
\ref{throwaway.rem} for further discussion of this point. When $b>0,$ we will
find a PDE satisfied by the limiting log potential of the polynomials and
establish a push-forward theorem similar to Theorem \ref{pushRepeated.thm}.
The theorem will be the \textquotedblleft bulk\textquotedblright\ version of
the claim that the zeros evolve along the characteristic curves of the
relevant PDE. The characteristic curves, however, will no longer be linear in time.%

\begin{figure}[ptb]%
\centering
\includegraphics[
height=3.6037in,
width=3.6288in
]%
{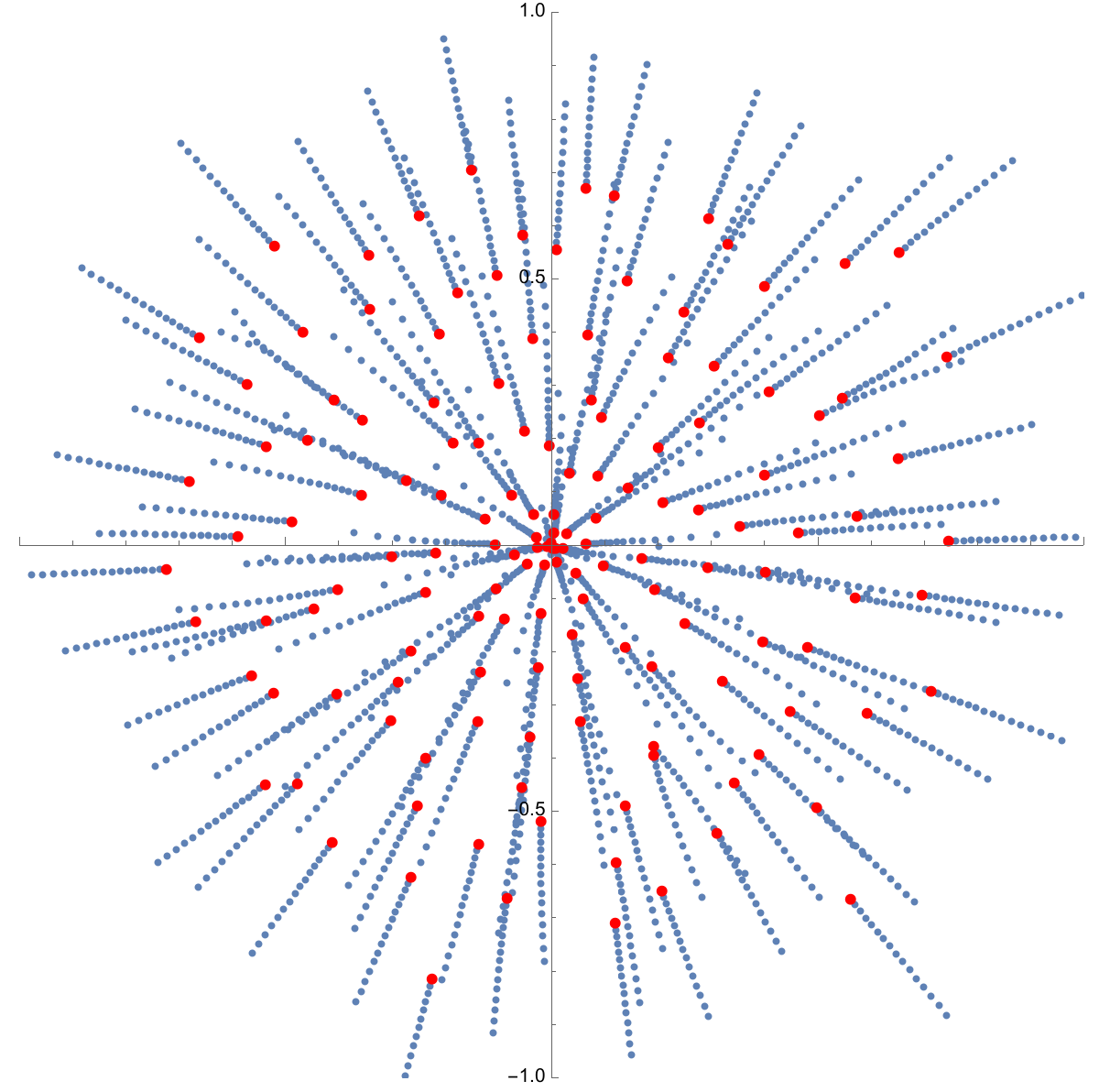}%
\caption{The degree-increasing case with $a=5/2$ and $b=3/2.$ The roots move
radially inward without reaching the origin. Shown for $N=150$ and $t=1/5,$
starting from a Weyl polynomial  (Example \ref{Weyl.example}). The blue dots show the roots at 15 different times less than $t$ and the red dots show the roots at time $t$.} 
\label{degincreasing.fig}%
\end{figure}

In the case that $b>0$ and $a<b,$ the degree of the polynomial decreases with
time and the behavior of the system is similar to the repeated differentiation
case: the zeros will move radially inward and eventually hit the origin. In
the case that $b>0$ and $a>b,$ the degree of the polynomial increases with
time. In that case, the zeros of the original polynomial move radially inward
\textit{without} reaching the origin, while at the same time, zeros are being
created at the origin. See Figure \ref{degincreasing.fig}.

We consider also the case $b<0$ (Section \ref{bLessZero.sec}) which includes
the case of repeated integration ($a=0$ and $b=-1$). This case is more
complicated, in that the limiting root distribution can have mass concentrated
on a circle. This singular behavior arises because the characteristic curves
may collide in this case. See Figure \ref{intweyl.fig}.%

\begin{figure}[ptb]%
\centering
\includegraphics[
height=2.7095in,
width=5.4457in
]%
{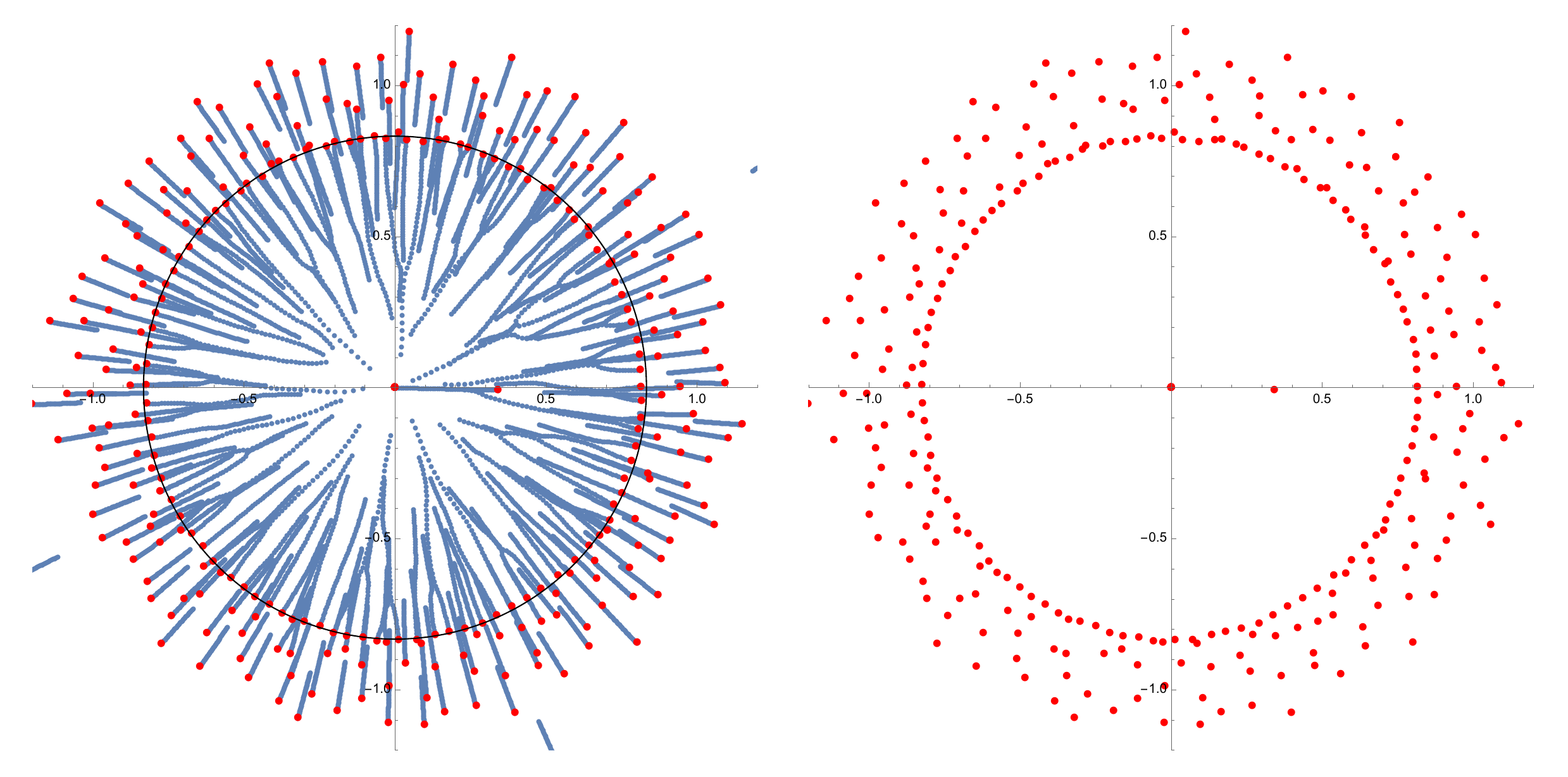}%
\caption{The repeated integration case ($a=0,$ $b=-1$) starting from a Weyl
polynomial (Example \ref{Weyl.example}). The blue dots show the roots of all the polynomials with time
$s<t$, while the red dots show the roots with time $t$. Shown for $t=0.15$ and
$N=300.$}%
\label{intweyl.fig}%
\end{figure}

\section{The fractional flow}

\subsection{Fractional derivatives}\label{fractional.sec}

Recall that the gamma function $\Gamma(z)$ has simple poles at
$z=0,-1,-2,\ldots.$ In what follows, we therefore interpret $1/\Gamma(z)$ as
being zero at these points. When $b$ and $j$ are positive integers, we have%
\begin{align*}
\left(  \frac{d}{dz}\right)  ^{b}z^{j}  &  =j(j-1)\cdots(j-b+1)z^{j-b}\\
&  =\frac{j!}{(j-b)!}z^{j-b}\\
&  =\frac{\Gamma(j+1)}{\Gamma(j+1-b)}z^{j-b},
\end{align*}
where the result is zero (because of the pole in the denominator) when $b>j.$
We then take this formula as a definition for all real numbers $b$ and $j,$
subject to the restriction that the formula should be nonsingular, namely that
$j$ should not be a negative integer. This definition agrees with the
Riemann--Liouville fractional derivative operator with basepoint 0, cf.
\cite{fraccalc}.

\subsection{The general flow\label{generalFlow.sec}}

We now consider applying the operator $z^{a}(d/dz)^{b}$ repeatedly to a
polynomial of degree $N.$ When computing the limiting distribution of roots,
it is convenient to scale this operator by a constant depending on $N,$ which
of course does not affect the roots. Thus, we will actually consider the
operators of the form%
\begin{equation}
\frac{1}{N^{b}}z^{a}\left(  \frac{d}{dz}\right)  ^{b}, \label{firstOp}%
\end{equation}
where $a$ and $b$ are real numbers. The power of $N$ in (\ref{firstOp}) is
chosen so that the coefficients of polynomials obtained by repeated
applications of the operator will have suitable asymptotics as $N\rightarrow
\infty.$

We then apply the operator in (\ref{firstOp}) to powers of $z,$ with the
convention that we \textit{throw away any negative powers of }$z$\textit{ that
arise}, as in the following definition. (See Remark \ref{throwaway.rem} for
the motivation behind this convention.)

\begin{definition}
\label{DAB.def}We define an operator $D^{a,b}$ as follows. For each real
number $j\geq0,$ we define
\begin{equation}
D^{a,b}z^{j}=\frac{1}{N^{b}}\frac{\Gamma(j+1)}{\Gamma(j+1-b)}z^{j+a-b}
\label{dab}%
\end{equation}
when $j+a-b\geq0$ and
\[
D^{a,b}z^{j}=0
\]
when $j+a-b<0.$ Even if $j+a-b\geq0,$ we interpret $D^{a,b}z^{j}$ as being
zero if the gamma function in the denominator on the right-hand side of
(\ref{dab}) is evaluated at a nonpositive integer, i.e., if $b-j$ is a
positive integer. We then extend $D^{a,b}$ linearly to the space of all linear combinations
of powers of $z$.
\end{definition}

If $a-b$ is rational with denominator $l$, then
$(D^{a,b})^{l}$ will map polynomials to polynomials. After all, when $D^{a,b}$
is applied $l$ times to a polynomial, all powers of $z$ will be integers, and
negative powers are thrown away by definition.

\begin{proposition}
Suppose $a-b$ is rational with denominator $l$ and $t>0$ is chosen so that
$Nt$ is an integer multiple of $l.$ Then for a polynomial $P_{0}^{N}$ of
degree $N,$ we define
\[
P_{t}^{N}=(D^{a,b})^{Nt}P_{0}^{N}%
\]
and%
\begin{equation}
Q_{t}^{N}(z)=z^{-Nt(a-b)}P_{t}^{N}(z),\label{QntPnt}%
\end{equation}
where the power of $z$ in the definition of $Q_{t}^{N}$ is chosen so that
$Q_{t}^{N}$ again has degree $N.$ If the coefficient of $z^{j}$ in $P_{0}^{N}$
is $c(N,j),$ then the coefficient $c(N,j,t)$ of $z^{j}$ in $Q_{t}^{N}$ is
computed as follows:%
\begin{equation}
c(N,j,t)=0,\quad0\leq j<Nt(b-a),\label{cnjt0}
\end{equation}
and
\begin{equation}
c(N,j,t)=c(N,j)\cdot\frac{1}{N^{Ntb}}\prod_{0\leq m<Nt}\frac{\Gamma
(j+1+m(a-b))}{\Gamma(j+1+m(a-b)-b)},\label{cnjt}%
\end{equation}
if $j\geq0$ and $Nt(b-a)\leq j\leq N.$
\end{proposition}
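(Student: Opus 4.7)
The plan is to reduce the claim to a monomial-by-monomial computation, using the linearity of $D^{a,b}$. Writing $P^{N}(z) = \sum_{j=0}^{N} c(N,j) z^{j}$, we have $P_{t}^{N} = \sum_{j=0}^{N} c(N,j) (D^{a,b})^{Nt} z^{j}$, so it suffices to show that $(D^{a,b})^{Nt} z^{j}$ equals a specific constant times $z^{j+Nt(a-b)}$ when $j+Nt(a-b) \geq 0$ (i.e.\ $j \geq Nt(b-a)$), and vanishes otherwise. Multiplying by $z^{-Nt(a-b)}$ as in (\ref{QntPnt}) then shifts the exponent back to $j$, and reading off the coefficient of $z^{j}$ gives formulas (\ref{cnjt0}) and (\ref{cnjt}).

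First I would establish by induction on $m \geq 0$ that, as long as the monomial has not yet been thrown away,
\[
(D^{a,b})^{m} z^{j} \;=\; \frac{1}{N^{mb}} \left( \prod_{k=0}^{m-1} \frac{\Gamma(j+1+k(a-b))}{\Gamma(j+1+k(a-b)-b)} \right) z^{j+m(a-b)}.
\]
The base case $m=0$ is trivial. For the inductive step, one applies (\ref{dab}) to $z^{j+m(a-b)}$: the exponent picks up an additional $a-b$, and the coefficient picks up an additional factor of $\frac{1}{N^{b}}\frac{\Gamma(j+1+m(a-b))}{\Gamma(j+1+m(a-b)-b)}$, extending the product. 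Setting $m = Nt$ reproduces the product in (\ref{cnjt}), and multiplying by $z^{-Nt(a-b)}$ in the definition of $Q_{t}^{N}$ converts this into a coefficient of $z^{j}$ in $Q_{t}^{N}$, giving (\ref{cnjt}). The instances where the denominator gamma factor is evaluated at a non-positive integer (i.e.\ $b - j - k(a-b)$ is a non-negative integer for some $k$) are automatically handled by the convention $1/\Gamma = 0$ at such points, consistent with the ``zero'' clause in Definition \ref{DAB.def}.

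The one place that needs care, and is the closest thing to an obstacle, is to justify the vanishing clause (\ref{cnjt0}) and to confirm that the throw-away convention does not kick in prematurely whenever the condition $j \geq Nt(b-a)$ holds. The intermediate exponents are $j + m(a-b)$ for $m = 0, 1, \ldots, Nt$, which form a monotone sequence in $m$. If $a \geq b$, the sequence is non-decreasing, so all exponents are $\geq j \geq 0$ and nothing is thrown away. If $a < b$, the sequence is decreasing, and its minimum is attained at $m = Nt$, which equals $j + Nt(a-b)$; this is $\geq 0$ precisely when $j \geq Nt(b-a)$. Hence when $j \geq Nt(b-a)$ no intermediate step produces a negative exponent, and the induction above applies. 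On the other hand, if $0 \leq j < Nt(b-a)$, then the exponent becomes negative at some intermediate step $m_{0} \leq Nt$ and the monomial is killed by Definition \ref{DAB.def}, so $(D^{a,b})^{Nt} z^{j} = 0$, yielding (\ref{cnjt0}). This completes the proof.
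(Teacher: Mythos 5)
Your argument is correct and matches the paper's intent, which simply says ``Direct calculation using (\ref{dab}).'' You expand this into an explicit induction on the number $m$ of applications of $D^{a,b}$, together with the monotonicity observation that for $a<b$ the running exponent $j+m(a-b)$ attains its minimum at $m=Nt$, which cleanly justifies both the vanishing clause (\ref{cnjt0}) and the fact that the throw-away convention does not interfere when $j\geq Nt(b-a)$.

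One small slip: you write that the denominator gamma factor is at a pole when $b-j-k(a-b)$ is a \emph{non-negative} integer, but the correct condition is that it be a \emph{positive} integer (if $b-j-k(a-b)=0$ then the argument is $\Gamma(1)=1$, not a pole). This does not affect the argument, since the product in (\ref{cnjt}) uses $1/\Gamma$, which the paper defines to vanish at the poles, so the intermediate-kill cases are automatically recorded as a zero factor; but the parenthetical should be corrected.
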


Note that in the general situation, we still use the conditions \eqref{cnjt0} and \eqref{cnjt} as written, even though $Nt(b-a)$ need not be an integer. The condition on $j$ in \eqref{cnjt0} is equivalent to the condition
\[
j<\lceil Nt(b-a)\rceil .
\]

\begin{proof}
Direct calculation using (\ref{dab}).
\end{proof}

With the formulas (\ref{cnjt0}) and (\ref{cnjt}) in hand, we see that $Q_{t}^{N}$ 
is well defined and a polynomial for any real numbers $a$ and $b,$ even if
$a-b$ is irrational. We thus make the following general definitions.

\begin{definition}
\label{FlowGeneralAB.def}For any real numbers $a,$ $b,$ and $t$ with $t>0,$ we
define a polynomial $Q_{t}^{N}$ of degree $N$ with coefficients $c(N,j,t)$
defined by (\ref{cnjt0}) and (\ref{cnjt}). Then we define a polynomial
$P_{t}^{N}$ by reversing the roles of $P_{t}^{N}$ and $Q_{t}^{N}$ in
(\ref{QntPnt}):%
\[
P_{t}^{N}(z)=z^{\lceil Nt(a-b)\rceil}Q_{t}^{N}(z).
\]
Here, if $Nt$ is not an integer, the product in (\ref{cnjt}) is understood as
being over all non-negative integers $m$ that are less than $Nt.$ We
generally assume that $t<t_{\max},$ where%
\begin{equation}
t_{\max}=\left\{
\begin{array}
[c]{cc}%
\infty & a\geq b\\
\frac{1}{b-a} & a<b
\end{array}
,\right.  \label{tmax}%
\end{equation}
since $Q_{t}^{N}$ becomes the zero polynomial for $t>t_{\mathrm{\max}}.$
\end{definition}

We now further comment on the convention that when $a<b,$ the coefficients of
$Q_{t}^{N}$ with $j<Nt(b-a)$ are taken to be zero.

\begin{remark}
\label{throwaway.rem}In the case $a<b,$ one \emph{could} define a polynomial
$R_{t}^{N}$ by applying the formula (\ref{cnjt}) for \emph{all} $j\geq0.$ When
$a-b$ is rational, this approach would amount to keeping the negative terms
when iterating the operator $z^{a}(d/dz)^{b},$ and then making all the powers
non-negative at the end by multiplying by $z^{Nt(b-a)}.$ While this approach
gives a well-defined polynomial $R_{t}^{N}$, its roots behave quite
differently from those of $Q_{t}^{N}.$ In particular, the distribution of the
roots of $Q_{t}^{N}$ behaves continuously as $(a,b)$ approaches $(0,1)$ (the
repeated differentiation case), while the distribution of the roots of
$R_{t}^{N}$ behaves discontinuously in this limit.
\end{remark}

To understand Remark \ref{throwaway.rem}, let us consider the case when
$(a,b)$ is close to $(0,1).$ When $a=0$ and $b=1,$ the polynomial $R_{t}^{N}$
is equal to $Q_{t}^{N}$, because repeated differentiation of a polynomial does
not generate any negative powers of $z.$ (That is to say, the expression on
the right-hand side of (\ref{cnjt}) will be zero for $j<Nt(b-a)$ in this
case.) But if we take, say, $a=0$ and $b=0.999,$ and some large value of $N,$
the low-degree coefficients of $Q_{t}^{N}$ will be zero by definition---but
the low-degree coefficients of $R_{t}^{N}$ will not even be small. Figure
\ref{randq.fig} then shows how the roots of $Q_{t}^{N}$ and $R_{t}^{N}$ behave
in this case. The roots of $Q_{t}^{N}$ resemble what we would get for
\emph{either} polynomial when $a=0$ and $b=1,$ namely the roots of the $Nt$-th
derivative of the original polynomial, together with $Nt$ roots at the origin.
By contrast, $R_{t}^{N}$ has a positive fraction of its roots concentrated
near a circle of some positive radius $r.$ (The roots having magnitude greater
than \thinspace$r$ are almost the same for the two polynomials.)%

\begin{figure}[ptb]%
\centering
\includegraphics[
height=2.6187in,
width=5.2278in
]%
{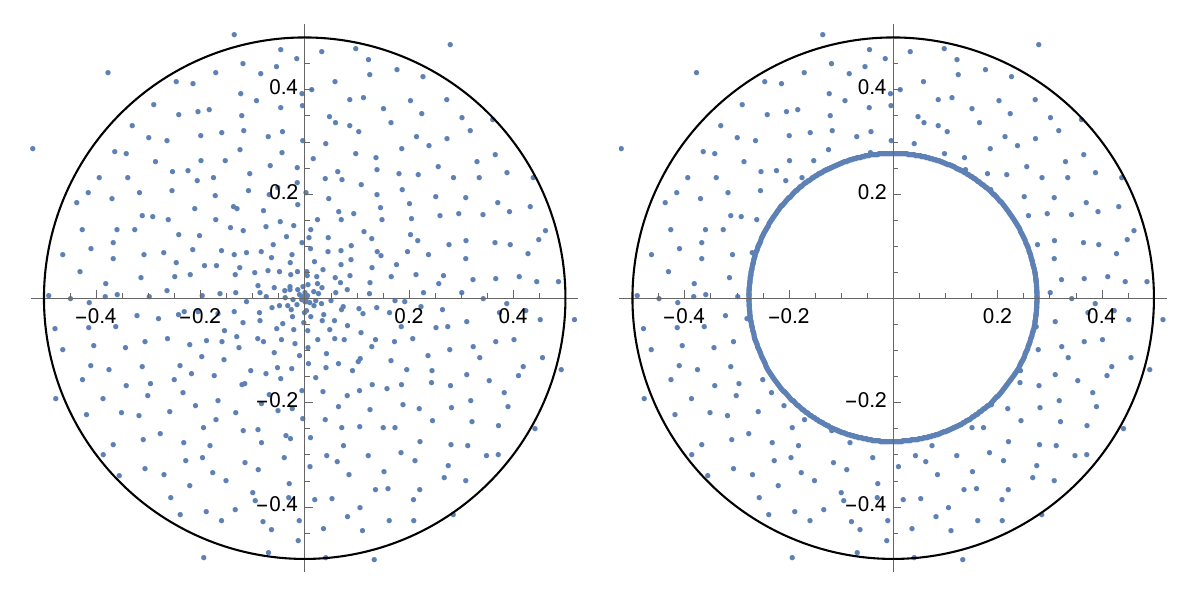}%
\caption{The roots of $Q_{t}^{N}$ (left) and $R_{t}^{N}$ (right) with $a=0$
and $b=0.999,$ starting from a Weyl polynomial. Shown with $N=1000$ and
$t=1/2.$}%
\label{randq.fig}%
\end{figure}

Returning to the situation of general $a$ and $b$, let us now assume that $j\geq0$ and $j\geq Nt(b-a).$ Then the arguments of the
gamma functions in the numerator of the right-hand side of (\ref{cnjt}) are
all at least 1. But for a small number of such $j$'s, the argument of the
gamma function in the \textit{denominator} can be a nonpositive integer, in
which case, we interpret the right-hand side of (\ref{cnjt}) as being zero.
If, for example, $a=3$ and $b=2,$ then $c(N,j,t)=0$ for $j=0$ and~$j=1$,
essentially because $D^{a,b}$ kills $z^{0}$ and $z^{1}.$ (When $j=1$ or $j=1$
and $m=0,$ the gamma function in the denominator is evaluated at a nonpositive
integer.) This sort of behavior causes a small technical difficulty in the
arguments in the next section. The following result will then be useful.

\begin{lemma}
\label{bottom.lem}Define%
\[
j_{\min}=\left\{
\begin{array}
[c]{cc}%
0 & a\geq b\\
\lceil Nt(b-a)\rceil & a<b
\end{array}
\right.  .
\]
Then for each $N$ and each $t$ with $0\leq t<t_{\max},$ there exists a
non-negative integer $j_{0}$ such that for $j\geq0,$ we have $c(N,j_{\min
}+j,t)=0$ if and only if $j<j_{0}.$ Furthermore, the $j_{0}$ depends only on $a$ and $b$; it does not depend on $N$, $t$, $g_0$, or the distribution of the random variables $\xi_j$.
\end{lemma}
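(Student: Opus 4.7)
The plan is to interpret the vanishing of $c(N,j_{\min}+j,t)$ as a dynamical question, and to show that the ``bad set'' of $j$ for which the coefficient vanishes is forced to be an initial segment of $\mathbb{Z}_{\geq 0}$ of uniformly bounded length. Under the standing assumption that the initial coefficients $c(N,k)$ are nonzero (almost sure in the random setting), a comparison of (\ref{cnjt}) with Definition \ref{DAB.def} shows that $c(N,j_{\min}+j,t)=0$ is equivalent to $(D^{a,b})^{Nt}z^{j_{\min}+j}=0$. Tracking the monomial through $Nt$ iterations, this happens exactly when, along the arithmetic progression $k_m:=(j_{\min}+j)+m(a-b)$ for $m=0,\dots,Nt-1$, one of two failure modes occurs: \emph{(A)} some exponent $k_m$ satisfies $b-k_m\in\mathbb{Z}_{>0}$ (the denominator gamma factor in (\ref{dab}) hits a pole), or \emph{(B)} the final exponent $k_{Nt}=k_0+Nt(a-b)$ is negative, so the monomial is discarded by the convention of Definition \ref{DAB.def}. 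Denote the corresponding subsets of $\mathbb{Z}_{\geq 0}$ by $\mathrm{Bad}_A$ and $\mathrm{Bad}_B$.

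The set $\mathrm{Bad}_B$ is handled immediately: it is empty when $a\geq b$; and when $a<b$, the condition $k_{Nt}<0$ reduces to $j<\delta$ with $\delta:=Nt(b-a)-\lfloor Nt(b-a)\rfloor\in[0,1)$, so $\mathrm{Bad}_B\subseteq\{0\}$. The core step is showing that $\mathrm{Bad}_A$ is downward-closed in $j$. Suppose $j\geq 1$ lies in $\mathrm{Bad}_A$, witnessed by some $m$ and some $n\in\mathbb{Z}_{>0}$ with $k_m(j)=b-n$. At $j-1$ the same index $m$ yields $k_m(j-1)=b-(n+1)$, still a pole-inducing value with $n+1\in\mathbb{Z}_{>0}$. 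The only possible obstruction is the indeterminate $0/0$ case in the gamma ratio, which forces $b$ to be a positive integer and $n=b$, hence $k_m(j)=0$. But $k_m(j)=0$ with $m\in\{0,\dots,Nt-1\}$ would force $m(b-a)=j+j_{\min}\geq 1+j_{\min}$; for $a\geq b$ one has $m(b-a)\leq 0<1+j_{\min}$, while for $a<b$ one gets $m\geq Nt+(1-\delta)/(b-a)>Nt-1$, both contradictions. Hence $\mathrm{Bad}_A$ is downward-closed, and so $\mathrm{Bad}:=\mathrm{Bad}_A\cup\mathrm{Bad}_B=\{0,1,\dots,j_0-1\}$ for some $j_0\geq 0$.

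For the uniform bound on $j_0$, every $j\in\mathrm{Bad}_A$ must satisfy $k_m(j)\leq b-1$ for some $m$. When $a\geq b$, $j_{\min}=0$ and $k_m(j)\geq j$, giving $j\leq b-1$, so $j_0\leq\lceil b\rceil$. When $a<b$, the minimum of $k_m(j)$ over $m\in\{0,\dots,Nt-1\}$ is attained at $m=Nt-1$ and equals $j+(b-a)-\delta$; the inequality $j+(b-a)-\delta\leq b-1$ gives $j\leq a-1+\delta<a$, so $j_0\leq\lceil a\rceil$. In either regime $j_0$ depends only on $a,b$, as required. The main technical nuisance is handling the $0/0$ boundary case in the downward-closure step; once that edge case is ruled out by the short arithmetic check above, both the initial-segment structure and the uniform bound follow from elementary estimates.
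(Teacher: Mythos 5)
Your proof is correct and takes essentially the same route as the paper's: show that the bad set of indices is downward-closed and bounded independent of $N$ and $t$, forcing it to be a bounded initial segment. The only superfluous step is ruling out an $\infty\cdot 0$ indeterminacy in the gamma ratio---by the convention of Definition \ref{DAB.def}, $D^{a,b}z^{j}$ (and hence each factor in (\ref{cnjt})) is declared zero as soon as the denominator gamma sits at a nonpositive integer, regardless of the numerator, so downward closure follows just from observing that decrementing $j$ decrements each denominator argument by one; your extra check that $k_m(j)>0$ is correct but not needed.
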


What this result means is that problematic coefficients are consecutive, with
indices between $j_{\min}$ and $j_{\min}+j_{0}-1,$ and that the number of
problematic coefficients is bounded.

\begin{proof}
If $c(N,j_{\min}+j,t)=0$ for some $j\geq0,$ then one of the gamma functions in the
denominator of (\ref{cnjt}) must be evaluated at a nonpositive integer when
$j$ replaced by $j_{\min}+j.$ The same will then be true for $c(N,j_{\min
}+k,t)$ for $0\leq k\leq j.$ On the other hand, it is easy to see that if $j$
is big enough---with bounds independent of $N$ and $t$---then all the gamma
functions in the computation of $c(N,j_{\min}+j,t)$ will be evaluated at
positive numbers. We thus take $j_{0}$ to be one more than the largest
$j\geq0$ for which $c(N,j_{\min}+j,t)=0,$ if such a $j$ exists, and we take
$j_{0}$ to be zero, otherwise.
\end{proof}

\section{The exponential profiles of $P_{t}^{N}$ and $Q_{t}^{N}$%
\label{PNandQN.sec}}

In this section, we consider a class of random polynomials with independent
coefficients, studied by Kabluchko and Zaporozhets in \cite{KZ}. We show that
if $P_{0}^{N}$ is of this type, so are the polynomials $P_{t}^{N}$ and
$Q_{t}^{N}$ in Definition \ref{FlowGeneralAB.def}. In the case of repeated
differentiation ($a=0$ and $b=1$), this result was obtained previously by Feng
and Yao \cite[Theorem 5(2)]{FengYao}, using different scaling conventions. 

\subsection{Random polynomials with independent coefficients}

Consider i.i.d. random variables $\xi_{0},\xi_{1},\ldots$ that are
nondegenerate (i.e., not almost surely constant) and satisfy%
\[
\mathbb{E}\log(1+\left\vert \xi_{j}\right\vert )<\infty.
\]
Then, for each $N,$ consider a random polynomial of degree $N$ of the form
\begin{equation}
P_{0}^{N}(z)=\sum_{j=0}^{N}\xi_{j}a_{j;N}z^{j}\label{PNform}%
\end{equation}
for certain deterministic complex constants $\{a_{j;N}\}.$ Thus, the coefficients of
$P_{0}^{N}$ are independent (but typically not identically distributed).

We now assume that the constants $a_{j;N}$ have a particular asymptotic
behavior as $N$ tends to infinity. We consider a function
\[
g:[0,1]\rightarrow\lbrack-\infty,\infty),
\]
where we emphasize that $-\infty$ is an allowed value. For our purposes, it is
convenient to make the following assumptions on $g.$

\begin{assumption}
\label{gProperties.assumption}There is some $\alpha_{\min}$ with $0\leq
\alpha_{\min}<1$ such that the following conditions hold:

\begin{enumerate}
\item $g(\alpha)=-\infty$ for $0\leq\alpha<\alpha_{\min}$

\item $g$ is finite, continuous, and concave on $(\alpha_{\min},1],$ so that
$g$ has a left derivative, denoted by $g^{\prime}$, at each point in
$(\alpha_{\min},1]$. In addition, $g$ is right-continuous at $\alpha_{\min}$.

\item $g^{\prime}(1)>-\infty.$
\end{enumerate}
\end{assumption}

We emphasize that $g$ need not be \textit{strictly} concave on $(\alpha_{\min},1]$. We also note that $g(\alpha_{\min})$ is allowed to have the value $-\infty$; see Remark \ref{KZtechnicalities.remark}. 

We then make the following assumption on the deterministic coefficients
$a_{j;N}$.

\begin{assumption}
\label{KZcoeff.assumption}The coefficients $a_{j;N}$ satisfy
\begin{equation*}
a_{j;N}=0\quad\text{if }j/N<\alpha_{\min}
\end{equation*}
and
\begin{equation}
\vert a_{j;N}\vert=e^{Ng(j/N)+o(N)} \label{aFromG}%
\end{equation}
in the following precise sense:
\begin{equation}
\lim_{N\rightarrow\infty}\sup_{0\leq j\leq N}\left\vert \left\vert
a_{j;N}\right\vert ^{1/N}-e^{g(j/N)}\right\vert =0, \label{gDefNew}%
\end{equation}
where $e^{g(j/N)}$ is interpreted be zero when $g(j/N)=-\infty.$
\end{assumption}
The function $g$ is called the \textbf{exponential profile} of the random
polynomial $P_{0}^{N}.$

Suppose that $g(\alpha_{\min})$ is finite, so that $g$ is bounded on $[\alpha_{\min},1]$. Then by the uniform
continuity of the exponential and logarithm functions on closed intervals, the following
condition is equivalent to (\ref{gDefNew}):%
\begin{equation}
\lim_{N\rightarrow\infty}\sup_{\alpha_{\min}N\leq j\leq N}\left\vert \frac
{1}{N}\log\left\vert a_{j;N}\right\vert -g(j/N)\right\vert =0.\label{gDef}%
\end{equation}
In the $a=b$ case of Theorem \ref{QNprofile.thm}, we will consider a profile $g_t$ with $\alpha_{\min}=0$ and $g_t(0)=-\infty$, in which case, 
we really need to use \eqref{gDefNew} rather than \eqref{gDef}.

It is possible to remove the assumption that $g$ be concave on $[\alpha_{\min
},1]$. But then the limiting behavior of the zeros of $P_{0}^{N}$ is
determined by the concave majorant of $g,$ that is, the smallest concave
function that is greater than or equal to $g.$ We will restrict our attention
to the concave case until Section \ref{bLessZero.sec}.

\begin{example}
[Littlewood--Offord polynomials]\label{LO.example}For all $\beta\geq0,$ the
function%
\begin{equation}
g(\alpha)=-\beta(\alpha\log\alpha-\alpha) \label{LOprofile}%
\end{equation}
defines a concave exponential profile with $\alpha_{\min}=0.$ The associated
random polynomials are called Littlewood--Offord polynomials with parameter
$\beta.$ One concrete realization of the Littlewood--Offord polynomials is given by 
\begin{equation*}
P_0^N(z)=\sum_{j=0}^N \xi_j\frac{(N^{\beta}z)^j}{(j!)^{\beta}},
\end{equation*}
which is (up to a scaling of the variable) the form introduced by Littlewood and Offord in \cite[Theorem 3]{LO}. Special cases include $\beta=0$ (Kac polynomials), $\beta=1/2$ (Weyl polynomials as in \eqref{weyldef}), and $\beta=1$ (exponential polynomials).
\end{example}

Note that when $\beta=0$ in (\ref{LOprofile}), we have $g\equiv0$ and we may
take the deterministic coefficients $a_{j;N}$ to be equal 1 for all $j$ and
$N.$ In that case, the coefficients of the associated polynomial are
independent and identically distributed, as in the work of Littlewood--Offord \cite{LO} and Kac \cite{Kac}.

We now state a main result of \cite{KZ}, in a level of generality convenient
for our applications.

\begin{theorem}
[Kabluchko--Zaporozhets]\label{KZ.thm}Let $P_{0}^{N}$ be a random polynomial
with independent coefficients, with a fixed exponential profile $g$ satisfying
Assumptions \ref{gProperties.assumption} and \ref{KZcoeff.assumption}. Then
the empirical root measure of $P_{0}^{N}$ converges weakly in probability to a
rotationally invariant probability measure $\mu.$ The radial part of $\mu$ is
the push-forward of the uniform measure on $[0,1]$ under the map $r(\alpha)$
given by%
\begin{equation}
r(\alpha)=\left\{
\begin{array}
[c]{cc}%
0 & 0\leq\alpha\leq\alpha_{\min}\\
e^{-g^{\prime}(\alpha)} & \alpha>\alpha_{\min}%
\end{array}
\right.  .\label{RofAlpha}%
\end{equation}
That is to say, $\mu$ is the unique rotationally invariant probability measure
on $\mathbb{C}$ such that the distribution of $\,r=\left\vert z\right\vert $
is the above push-forward measure. In particular, $\mu$ will consist of
$\alpha_{\min}$ times a $\delta$-measure at the origin, plus a measure
supported on an annulus with inner and outer radii given by%
\begin{equation}
r_{\mathrm{in}}=\lim_{\alpha\rightarrow\alpha_{\min}^{+}}e^{-g^{\prime}%
(\alpha)},\quad r_{\mathrm{out}}=e^{-g^{\prime}(1)}.\label{InnerOuter}%
\end{equation}
By Assumption \ref{gProperties.assumption}, $r_{\mathrm{out}}$ is finite, and
the inner radius $r_{\mathrm{in}}$ can be positive or zero.
\end{theorem}

If we consider, for example, the Weyl polynomials (taking $\beta=1/2$ in
(\ref{LOprofile})), we have $g^{\prime}(\alpha)=-\frac{1}{2}\log\alpha$ and%
\[
r(\alpha)=e^{-g^{\prime}(\alpha)}=\sqrt{\alpha}.
\]
This means that $r^{2}=\alpha$ is uniformly distributed on $[0,1],$ indicating
that $\mu$ is the uniform probability density on the unit disk.

\begin{remark}\label{KZtechnicalities.remark}
Theorem \ref{KZ.thm} corresponds to the case of \cite[Theorem 2.8]{KZ} where the constant $T_0$ in Assumption (A1) of \cite{KZ} equals 1. Theorem 2.8 of \cite{KZ} actually assumes $\alpha_{\min}=0$, but the general case of Theorem \ref{KZ.thm} can easily be reduced to the case 
$\alpha_{\min}=0$, by pulling out a factor of $z^k$, where $k$ is the largest integer less than $\alpha_{\min} N$. See Section \ref{PandQ.sec} where in the case $a<b$, the polynomials $P_t^N$ have $\alpha_{\min}>0$, while the associated polynomials $Q_t^N$ (with a large power of $z$ factored out) have $\alpha_{\min}=0$. 

Assuming now that $\alpha_{\min}=0$, we note that Theorem 2.8 in \cite{KZ} assumes (in Assumption (A1)) that $g(\alpha_{\min})>-\infty$---that is, in the notation of \cite{KZ}, that $f(0)>0$. This assumption, however, is not actually needed in the proof. Specifically, the proof works with the function $f(\alpha):=e^{g(\alpha)}$. As long as this function is continuous at $0$, no change in the proof is needed if $f(0)=0$.

We note that even if the initial polynomial $P_0^N$ satisfies the exact assumptions of \cite[Theorem 2.8]{KZ}, the exponential profiles of the polynomials $Q_t^N$ and $P_t^N$ may have a nonzero value of $\alpha_{\min}$ and (in the case $a=b$) may have $g(\alpha_{\min})=-\infty$. See Theorem \ref{QNprofile.thm} and Proposition \ref{PNprofile.prop}. 
\end{remark}

\subsection{The case of $P_{t}^{N}$ and $Q_{t}^{N}$}\label{PandQ.sec}

In this section, we will derive the exponential profiles for the polynomials
$P_{t}^{N}$ and $Q_{t}^{N},$ when the initial polynomial has independent
coefficients, extending the results of Feng--Yao \cite[Theorem 5(2)]{FengYao}
for the case of repeated differentiation. Specifically, we make the following
assumption on the initial polynomial $P_{0}^{N}.$

\begin{assumption}
\label{InitialPoly.assumption}The initial polynomial $P_{0}^{N}$ in the
definition of $P_{t}^{N}$ and $Q_{t}^{N}$ is a polynomial with independent
coefficients satisfying Assumptions \ref{gProperties.assumption} and
\ref{KZcoeff.assumption} and having $\alpha_{\min}=0.$ We denote the
exponential profile of $P_{0}^{N}$ by $g_{0}.$
\end{assumption}

Before coming to the main result of this section, we must address a small
technical issue. The issue is that the coefficient $c(N,j,t)$ of $Q_{t}^{N}$
can be zero even if $j\geq Nt(b-a),$ as discussed at the end of Section
\ref{generalFlow.sec}. Because of this, $Q_{t}^{N}$ may not satisfy the
estimates (\ref{gDefNew}). But if $j_{0}$ is as in Lemma \ref{bottom.lem}, we
can simply pull out a factor of $z^{j_{0}}$ from $Q_{t}^{N}$ and the new
polynomial $\tilde Q^N_t$ of degree $N-j_{0}$ will no longer have this problem.

Now, Lemma \ref{bottom.lem} says that $j_0$ is bounded independent of $N$. Thus, $\tilde Q^N_t$ has the same 
limiting root distribution as $Q_{t}^{N}$ itself. Furthermore, since the function $\alpha\mapsto e^g(\alpha)$ is 
continuous by Assumption \ref{gProperties.assumption}, replacing $N$ by $N-j_0$ in \eqref{gDefNew} does not affect the limit.

Throughout the rest of the paper, we assume that $P_{0}^{N}$ is a random
polynomial with independent coefficients with exponential profile $g_{0}$
satisfying Assumption \ref{InitialPoly.assumption}. In some cases, we will
impose additional assumptions on $g_{0}.$ We then introduce the polynomials
$P_{t}^{N}$ and $Q_{t}^{N}$ in Definition \ref{FlowGeneralAB.def}. The
polynomial $Q_{t}^{N}$ will have the same form as in (\ref{PNform}), except
that the deterministic constants will have changed. The new deterministic
constants $a_{j;N}^{t}$ can be read off from (\ref{cnjt}) as
\begin{equation}
a_{j;N}^{t}=a_{j;N}\cdot\frac{1}{N^{Ntb}}\prod_{m=0}^{Nt-1}\frac
{\Gamma(j+1+m(a-b))}{\Gamma(j+1+m(a-b)-b)}\label{ajnt}%
\end{equation}
for non-negative integers $j$ satisfying $j\geq Nt(b-a),$ with $a_{j;N}^{t}$
being zero if $j<Nt(b-a).$

\begin{theorem}
[Exponential profile of $Q_{t}^{N}$]\label{QNprofile.thm}Assume that the
random polynomial $P_{0}^{N}$ satisfies Assumption
\ref{InitialPoly.assumption}. When $a=b$ we further assume that $b>0.$ Assume
$t<t_{\max},$ where $t_{\max}$ is as in (\ref{tmax}). Define $\alpha_{\min
}^{t}$ as%
\begin{equation}
\alpha_{\min}^{t}=\left\{
\begin{array}
[c]{cc}%
0 & a\geq b\\
t(b-a) & a<b
\end{array}
\right.  .\label{alpha0}%
\end{equation}
When $a\neq b,$ define%
\begin{equation}
g_{t}(\alpha)=g_{0}(\alpha)+\frac{b}{a-b}\{[\alpha+t(a-b)]\log[\alpha
+t(a-b)]-\alpha\log\alpha\}-bt\label{gtAlpha}%
\end{equation}
for $\alpha\geq\alpha_{\min}^{t}$ and $g_{t}(\alpha)=-\infty$ for
$\alpha<\alpha_{\min}^{t}.$ In that case, we have%
\begin{equation}
(g_{t}(\alpha)-g_{0}^{{}}(\alpha))^{\prime}=\frac{b}{a-b}\{\log[\alpha
+t(a-b)]-\log\alpha\}\label{gtAlphaD1}%
\end{equation}
and%
\begin{equation}
(g_{t}(\alpha)-g_{0}(\alpha))^{\prime\prime}=-\frac{b}{a-b}\left(  \frac
{1}{\alpha}-\frac{1}{\alpha+t(a-b)}\right)  ,\label{gtAlphaD2}%
\end{equation}
for $\alpha>\alpha_{\min}^{t}.$ When $a=b$ (and $b>0$), the formulas are
obtained by letting $a$ approach $b$ in (\ref{gtAlpha})--(\ref{gtAlphaD2}):%
\begin{align}
g_{t}(\alpha) &  =g_{0}(\alpha)+bt\log\alpha\label{gtEqual}\\
(g_{t}(\alpha)-g_{0}(\alpha))^{\prime} &  =\frac{bt}{\alpha}\label{gtEqualD1}%
\\
(g_{t}(\alpha)-g_{0}(\alpha))^{\prime\prime} &  =-\frac{bt}{\alpha^{2}%
}.\label{gtEqualD2}%
\end{align}
Then $g_{t}$ is the exponential profile of $Q_{t}^{N},$ in the sense that the
coefficients $a_{j;N}^{t}$ of the polynomial $Q_{t}^{N}$ satisfy the error
estimate in (\ref{gDefNew}) with respect to the function $g_{t},$ after
removing a factor of $z^{j_{0}}$ as in Lemma \ref{bottom.lem}. Thus, if
$g_{t}$ is concave on $[\alpha_{\min}^{t},1],$ it will satisfy
Assumptions \ref{gProperties.assumption} and \ref{KZcoeff.assumption}.
Concavity will hold when $b\geq0$ and may also hold when $b<0,$ depending on
the choice of $g_{0}.$
\end{theorem}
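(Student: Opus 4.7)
The derivative formulas \eqref{gtAlphaD1}, \eqref{gtAlphaD2}, \eqref{gtEqualD1}, \eqref{gtEqualD2} follow by direct differentiation of \eqref{gtAlpha} and \eqref{gtEqual}, and the $a=b$ formulas are obtained by taking the limit $a\to b$ in the $a\neq b$ formulas (L'Hôpital applied to the bracketed expression). Concavity of $g_t-g_0$ is immediate from \eqref{gtAlphaD2}/\eqref{gtEqualD2}: when $b\geq 0$ and $a\geq b$, both $1/\alpha$ and $1/(\alpha+t(a-b))$ are positive with the first larger, giving nonpositive second derivative; the case $a<b$ is analogous. Thus the analytic content of the theorem is the claim that $|a_{j;N}^t|^{1/N}\to e^{g_t(j/N)}$ uniformly in $j$ in the sense of \eqref{gDefNew}.

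The main step is to pass from the product formula \eqref{ajnt} to an integral. Taking logarithms and dividing by $N$,
\[
\frac{1}{N}\log|a_{j;N}^t|=\frac{1}{N}\log|a_{j;N}|-tb\log N+\frac{1}{N}\sum_{m=0}^{Nt-1}\bigl[\log\Gamma(j+1+m(a-b))-\log\Gamma(j+1+m(a-b)-b)\bigr].
\]
Stirling's formula gives $\log\Gamma(x+1)-\log\Gamma(x+1-b)=b\log x+O(1/x)$ as $x\to\infty$, so for $j=\alpha N$ with $\alpha$ in a compact subset of $(\alpha_{\min}^t,1]$ the bracketed expression is $b\log(N\alpha+m(a-b)(1+o(1)))=b\log N+b\log(\alpha+(m/N)(a-b))+o(1)$, uniformly in $m$. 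The $bt\log N$ summed over $m=0,\dots,Nt-1$ cancels the $-tb\log N$ prefactor, and what remains is a Riemann sum for
\[
b\int_0^t\log(\alpha+s(a-b))\,ds=\frac{b}{a-b}\bigl\{[\alpha+t(a-b)]\log[\alpha+t(a-b)]-\alpha\log\alpha\bigr\}-bt,
\]
which is exactly $g_t(\alpha)-g_0(\alpha)$ from \eqref{gtAlpha}. Combined with Assumption~\ref{KZcoeff.assumption} for $|a_{j;N}|^{1/N}$, this yields the desired asymptotics on compact subsets of $(\alpha_{\min}^t,1]$.

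The principal obstacle is uniformity near the lower boundary $j\approx\lfloor Nt(b-a)\rfloor$, where the arguments $j+m(a-b)$ become small for $m$ near $Nt$ and Stirling's approximation degrades. To handle this I would split the sum over $m$ at the threshold where $j+m(a-b)\geq \sqrt{N}$ (say): on the bulk portion Stirling applies uniformly with error $o(1)$ after division by $N$; on the tail portion there are at most $O(\sqrt{N})$ summands, and the ratio of gamma functions is bounded by a polynomial in its argument, contributing at most $O(N^{-1/2}\log N)=o(1)$ to $\frac{1}{N}\log|a_{j;N}^t|$. The integrability of $\log(\alpha+s(a-b))$ near the vanishing endpoint guarantees that the Riemann sum still converges to the stated integral uniformly in $\alpha$, now including the boundary $\alpha=\alpha_{\min}^t$. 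Finally, the issue that some coefficients $a_{j;N}^t$ vanish because of the gamma function in the denominator hitting a nonpositive integer is handled exactly by Lemma~\ref{bottom.lem}: pulling out the factor $z^{j_0}$ removes the offending leading block, after which the remaining coefficients satisfy \eqref{gDefNew} against $g_t$, and $j_0$ being bounded independent of $N$ means the shift $j\mapsto j-j_0$ does not affect the limit profile.
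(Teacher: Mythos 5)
Your proposal follows the same overall strategy as the paper: take logarithms of the product formula \eqref{ajnt}, apply the Gamma-ratio asymptotic (Lemma \ref{lem:GammaOverGamma}), recognize the remaining sum as a Riemann sum for $b\int_0^t\log(\alpha+s(a-b))\,ds$, and dispatch the vanishing-coefficient issue via Lemma \ref{bottom.lem}. Where you differ is in the boundary handling. For $a\ne b$ you introduce a moving $\sqrt{N}$ threshold on $j+m(a-b)$ and argue that the tail comprises $O(\sqrt{N})$ summands, each bounded by a polynomial, contributing $O(N^{-1/2}\log N)$. The paper instead uses a \emph{bounded-in-$N$} cutoff $m_0^j$ chosen so that the arguments of the Gamma function in the denominator satisfy $j+1+m(a-b)-b\ge\max(a-b,1-b)$; the number of dropped terms is bounded independently of $N$ and $t$, and the sum over the retained terms is controlled by a Cesàro-average argument. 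Both thresholds work for $a\ne b$; your choice has the cosmetic advantage of being uniform near the lower boundary at the price of a slightly more delicate counting of tail summands.

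The genuine gap in your proposal is the case $a=b$ (with $b>0$). When $a=b$, the quantity $j+m(a-b)=j$ is independent of $m$, so your threshold partitions the range of $m$ into either the entire block (if $j<\sqrt{N}$) or none of it (if $j\ge\sqrt{N}$). In particular, for $j<\sqrt{N}$ there are $Nt$ tail summands, not $O(\sqrt{N})$, and the claimed $O(N^{-1/2}\log N)$ tail estimate fails. At the same time, the Riemann-sum identification degenerates: the exponential profile $g_t(\alpha)=g_0(\alpha)+bt\log\alpha$ tends to $-\infty$ as $\alpha\to0^+$, so the estimate \eqref{gDefNew} must be verified in the form $\bigl||a_{j;N}^t|^{1/N}-e^{g_t(j/N)}\bigr|\to0$, not in the logarithmic form \eqref{gDef}. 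The paper handles this with a dedicated $\delta$-split: using the monotonicity of $\Gamma(j+1)/\Gamma(j+1-b)$ in $j$ to show that for $j\le\delta N$ both $|a_{j;N}^t|^{1/N}$ and $e^{g_t(j/N)}$ are uniformly small once $\delta$ is small, and then applying the Gamma asymptotic only for $j>\delta N$. Your L'H\^opital remark recovers the \emph{formula} for $g_t$ when $a=b$ but not the convergence $|a_{j;N}^t|^{1/N}\to e^{g_t(j/N)}$, which requires this separate argument.

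A secondary (and shared with the paper, which leaves the $a<b$ case to a one-line remark) subtlety is that near the lower boundary the bound ``the ratio of Gamma functions is bounded by a polynomial'' is only an upper bound: when $j+1+m(a-b)-b$ is close to, but not equal to, a nonpositive integer, $\Gamma$ in the denominator can be large, so the ratio---and hence $|a_{j;N}^t|^{1/N}$---can be small. One must argue that this cannot pull $\tfrac1N\log|a_{j;N}^t|$ far below $g_t(j/N)$. This is benign in the end (only $O(\sqrt N)$ such $m$'s exist for each relevant $j$, each contributing at worst $O(\log N)$), but your argument as stated addresses only one side of the estimate.
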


The proof will be given later in this subsection. Note that the right-hand
sides of (\ref{gtAlphaD2}) and (\ref{gtEqualD2}) are negative when $b>0,$
provided that $\alpha$ is greater than the quantity $\alpha_{\min}^{t}$ in
(\ref{alpha0}). Thus, when $b>0$, we see that $g_t$ is the sum of two concave functions and 
is therefore concave. When $b<0,$ the right-hand sides of (\ref{gtAlphaD2}) and
(\ref{gtEqualD2}) are positive for $\alpha>\alpha_{\min}^{t},$ in which case,
$g_{t}$ will only be concave if the concavity of $g_{0}$ outweighs the failure
of concavity of $g_{t}-g_{0}.$ See Section \ref{bLessZero.sec}. The reason for
the restriction $b>0$ in the case $a=b$ is that if $b<0,$ then in
(\ref{gtEqual}), we would have $g_{t}(0)=+\infty$, which is not allowed in
Theorem \ref{KZ.thm} (because it violates Assumption \ref{gProperties.assumption}).

\begin{remark}
Assume $b>0$. Since $-g_0'$ is already increasing, it follows from \eqref{gtEqualD2} that the map $\alpha\mapsto -g_t'(\alpha)$ is \textit{strongly} increasing, in the sense
that there is a constant $c$ with $-g_t(\alpha_1)+g_t(\alpha_2)>c(\alpha_1-\alpha_2)$ whenever $\alpha_1>\alpha_2$. Theorem \ref{KZ.thm} then implies 
that the limiting root distribution of $Q_t^N$ is the sum of a $\delta$-measure at the origin and a measure that is absolutely continuous with respect to the two-dimensional Lebesgue measure.
\end{remark}

In the case of repeated differentiation ($a=0$ and $b=1$), the expression in
(\ref{gtAlpha}) is equivalent to the expression in Theorem 5(2) in the paper
\cite{FengYao} of Feng and Yao, after accounting for minor differences of
normalization and notation. (The main difference is in how one accounts for the change
in degree introduced by differentiation.)

We note that if $a<b,$ it is convenient to rewrite the formulas in a way that
makes it more obvious which quantities are positive. Thus, for example, we
have%
\[
g_{t}(\alpha)=g_{0}(\alpha)+\frac{b}{b-a}\{\alpha\log\alpha-[\alpha
-t(b-a)]\log[\alpha-t(b-a)]\}-bt.
\]

We also note a natural scaling property of the results in Theorem
\ref{QNprofile.thm}.

\begin{remark}
\label{scaling.remark} All formulas in Theorem \ref{QNprofile.thm} are
unchanged if we multiply $a$ and $b$ by a positive constant $c$ and then
divide $t$ by $c$.
\end{remark}

\begin{corollary}
\label{rt.cor}If we use the notation $(\cdot)^{\prime}$ for the left
derivative, then for $\alpha>\alpha_{\min}^{t},$ we have%
\begin{equation}
e^{-g_{t}^{\prime}(\alpha)}=e^{-g_{0}^{\prime}(\alpha)}\left(  \frac
{\alpha+t(a-b)}{\alpha}\right)  ^{\frac{b}{b-a}},\quad a\neq b
\label{expGprime1}%
\end{equation}
and%
\begin{equation}
e^{-g_{t}^{\prime}(\alpha)}=e^{-g_{0}^{\prime}(\alpha)}e^{-\frac{bt}{\alpha}%
},\quad a=b. \label{expGprime2}%
\end{equation}

\end{corollary}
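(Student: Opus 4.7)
The plan is to derive Corollary \ref{rt.cor} directly from the derivative formulas in Theorem \ref{QNprofile.thm}, since the corollary is essentially a repackaging of equations (\ref{gtAlphaD1}) and (\ref{gtEqualD1}) into multiplicative form. No new technical input is needed; the computation is pure algebra.

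First I would treat the case $a \neq b$. From (\ref{gtAlphaD1}), for $\alpha > \alpha_{\min}^t$ we have
\[
g_t'(\alpha) - g_0'(\alpha) = \frac{b}{a-b}\bigl\{\log[\alpha + t(a-b)] - \log\alpha\bigr\} = \frac{b}{a-b}\log\!\left(\frac{\alpha + t(a-b)}{\alpha}\right).
\]
Negating both sides gives
\[
-g_t'(\alpha) = -g_0'(\alpha) + \frac{b}{b-a}\log\!\left(\frac{\alpha + t(a-b)}{\alpha}\right),
\]
and exponentiating yields (\ref{expGprime1}).

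Next I would handle the limiting case $a = b$. From (\ref{gtEqualD1}), we have $g_t'(\alpha) - g_0'(\alpha) = bt/\alpha$, so $-g_t'(\alpha) = -g_0'(\alpha) - bt/\alpha$, and exponentiating yields (\ref{expGprime2}). One can also observe, as a consistency check, that the right-hand side of (\ref{expGprime1}) tends to the right-hand side of (\ref{expGprime2}) as $a \to b$: writing $\varepsilon = a - b$, we have $\left(1 + t\varepsilon/\alpha\right)^{-b/\varepsilon} \to e^{-bt/\alpha}$, matching (\ref{expGprime2}).

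There is essentially no obstacle here: the only subtlety is the interpretation of derivatives as left derivatives (consistent with Assumption \ref{gProperties.assumption}(2)), which is already built into the statement of Theorem \ref{QNprofile.thm} and so transfers automatically. The restriction $\alpha > \alpha_{\min}^t$ likewise comes directly from the domain on which (\ref{gtAlphaD1}) and (\ref{gtEqualD1}) were established.
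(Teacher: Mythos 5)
Your proof is correct and is exactly the intended argument: the paper states Corollary \ref{rt.cor} without a separate proof because it follows by negating and exponentiating the derivative identities (\ref{gtAlphaD1}) and (\ref{gtEqualD1}) from Theorem \ref{QNprofile.thm}, which is precisely what you do. The algebra checks out, including the sign flip converting $\frac{b}{a-b}$ to $\frac{b}{b-a}$, and the limit consistency check as $a\to b$ is a nice sanity verification though not strictly needed.
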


The functions on the right-hand sides of (\ref{expGprime1}) and
(\ref{expGprime2}) are closely connected to the characteristic curves of the
PDE we will consider in Section \ref{PDE.sec}.

Before turning to the proof of Theorem \ref{QNprofile.thm}, we record also the
exponential profile for $P_{t}^{N}$ and give two examples.

\begin{proposition}[Exponential profile of $P^N_t$]
\label{PNprofile.prop}Under the assumptions of Theorem \ref{QNprofile.thm},
the exponential profile $h_{t}$ of the polynomial $P_{t}^{N}$ of degree
$N+\lceil Nt(a-b)\rceil$ is given by%
\begin{equation}
h_{t}(\alpha)=\frac{1}{(1+t(a-b))}g_{t}\left(  \alpha(1+t(a-b))-t(a-b)\right)
\label{htAlpha}%
\end{equation}
for $\alpha>\beta_{\min}^{t},$ where 
\begin{equation*}
\beta_{\min}^{t}=\left\{
\begin{array}
[c]{cc}%
0 & a\leq b\\
\frac{t(a-b)}{1+t(a-b)} & a>b
\end{array}.
\right. 
\end{equation*}
The function $h_{t}$ equals $-\infty$ for $\alpha<\beta_{\min}^{t}.$ As in Theorem \ref{QNprofile.thm}, we interpret the the result as saying that the
coefficients of the polynomial $P_{t}^{N}$ satisfy the error
estimate in (\ref{gDefNew}) with respect to the function $g_{t},$ after
removing a factor of $z^{j_{0}}$ as in Lemma \ref{bottom.lem}.
\end{proposition}

\begin{proof}
Since $P_{t}^{N}(z)=z^{\lceil Nt(a-b)\rceil}Q_{t}^{N}(z)$, the exponential profile $h_{t}$
of $P_{t}^{N}$ can be obtained as a rescaling of the exponential
profile $g_{t}$ of $Q_{t}^{N}.$
\end{proof}

\begin{example}
[Stability of the Littlewood--Offord distribution]\label{LOstable.example}%
Suppose $g_{0}$ is the exponential profile of the Littlewood--Offord
polynomials with parameter $\beta>0,$ namely%
\[
g_{0}(\alpha)=-\beta(\alpha\log\alpha-\alpha).
\]
Then suppose we choose $a$ and $b$ with $b>0$ so that
\[
\frac{b}{b-a}=\beta.
\]
Then (\ref{htAlpha}) takes the form%
\begin{equation}
h_{t}(\alpha)=-\beta(\alpha\log\alpha-\alpha)-\alpha\log[(1+t(a-b))^{\beta}].
\label{htStable}%
\end{equation}
In this case, $a<b,$ so $\beta_{\min}^{t}=0$ and the limiting root
distribution of $P_{t}^{N}$ is again the limiting root distribution of the
Littlewood--Offord polynomials with parameter $\beta$, dilated by a factor of
$(1+t(a-b))^{\beta}.$

If $a$ and $b$ are such that $b/(b-a)=\beta>0$ but $b<0,$ then $a$ must be greater than $b$, so that $\beta^t_{\min}$ is positive. In that case, 
the formula
(\ref{htStable}) still applies for $\alpha>\beta_{\min}^{t}$. In this case, the limiting root distribution of $P_{t}^{N}$ will
agree with a dilation of the limiting distribution of the Littlewood--Offord
polynomials, but only outside a disk of radius
\[
r_{\mathrm{in}}(t)=((a-b)t)^{\beta}.
\]
Note that in this case, the exponential profile $h_{t}$ is concave on
$(\beta_{\min}^{t},1],$ even if $b<0.$ See Section \ref{IntegrateExp.sec}.
\end{example}

\begin{proof}
The formula (\ref{htStable}) is obtained from (\ref{htAlpha}) by simplifying.
Meanwhile, subtracting a term of the form $\alpha\log c$ from the exponential
profile is easily seen to have the same effect as rescaling the variable in
the associated polynomial by $1/c$ (i.e., $p(z)\mapsto p(z/c)$), which
multiplies all the zeros by $c.$
\end{proof}

\begin{example}
[Kac case]Assume $g_{0}\equiv0$ (Kac case) and~$b>0.$ Then for $t<t_{\max},$
we have $r_{\mathrm{in}}(t)=0$ and
\[
r_{\mathrm{out}}(t)=\left\{
\begin{array}
[c]{cc}%
(1+(a-b)t)^{\frac{b}{b-a}} & a\neq b\\
e^{-bt} & a=b
\end{array}
.\right.
\]
The limiting root distribution $\mu_t$ of the polynomials $P^N_t$ is given explicitly in three cases as
\[
d\mu_{t}=\mathbf{1}_{\{r<r_{\mathrm{out}}\}}\frac{1}{1+t(a-b)}\frac
{t(a-b)^{2}}{b}\frac{r^{a/b}}{(r-r^{a/b})^{2}}~dr~\frac{d\theta}{2\pi},\quad
a<b,
\]
and%
\[
d\mu_{t}=\mathbf{1}_{\{r<r_{\mathrm{out}}\}}\frac{bt}{r\log(r)^{2}}%
~dr~\frac{d\theta}{2\pi},\quad a=b,
\]
and%
\[
d\mu_{t}=t(a-b)\delta_{0}+\mathbf{1}_{\{r<r_{\mathrm{out}}\}}(1-t(a-b))\frac
{t(a-b)^{2}}{b}\frac{r^{a/b}}{(r-r^{a/b})^{2}}~dr~\frac{d\theta}{2\pi},\quad
a>b.
\]

In the repeated differentiation case ($a=0,$ $b=1$), we get%
\[
r_{\mathrm{out}}(t)=(1-t)
\]
and%
\[
d\mu_{t}=\mathbf{1}_{\{r<1-t\}}\frac{1}{1-t}\frac{t}{(1-r)^{2}}dr~\frac
{d\theta}{2\pi}.
\]

\end{example}

\begin{proof}
We first compute with the polynomial $Q_{t}^{N}$, whose exponential profile is
$g_{t}.$ With $g_{0}\equiv0,$ we compute that for $a\neq b,$ we have%
\begin{equation}
r_{t}(\alpha):=e^{-g_{t}^{\prime}(\alpha)}=\left(  \frac{\alpha+t(a-b)}%
{\alpha}\right)  ^{\frac{b}{b-a}},\quad\alpha\in\lbrack\alpha_{\min}^{t},1].
\label{rtKac}%
\end{equation}
The formulas for $r_{\mathrm{in}}$ and $r_{\mathrm{out}}$ follow by evaluating
at $\alpha=\alpha_{\min}^{t}$ and at $\alpha=1.$ We can then solve the formula
for $r_{t}(\alpha)$ for $\alpha$ as
\[
\alpha_{t}(r)=\frac{t(b-a)}{1-r^{\frac{b-a}{b}}}.
\]
Then the limiting root distribution $\sigma_{t}$ of $Q_{t}^{N}$ has mass
$\alpha_{\min}^{t}$ at the origin and mass $\alpha_{t}(r)$ in the disk of
radius $r,$ for $0<r<r_{\mathrm{out}}(t).$ Then
\[
\alpha_{t}^{\prime}(r)=\frac{t(a-b)^{2}}{b}\frac{r^{a/b}}{(r-r^{a/b})^{2}}%
\]
gives the density of the distribution of $r$ for $r>0.$ It is then
straightforward to convert these results into results for the limiting root
distribution $\mu_{t}$ of $P_{t}^{N}.$ The case $a=b$ can be analyzed
similarly by letting $a\rightarrow b$ in (\ref{rtKac}).
\end{proof}

We prepare for the proof of Theorem \ref{QNprofile.thm}, with the following
well-known asymptotic of the Gamma function, which follows directly from
Stirling's formula; see for instance \cite[5.11.12]{Nist}.

\begin{lemma}
\label{lem:GammaOverGamma} For each fixed $b\in\mathbb{R},$ we have
\begin{equation}
\log\frac{\Gamma(z)}{\Gamma(z-b)}=b\log(z)+o(1) \label{GammaOverGamma}%
\end{equation}
as $z\rightarrow\infty.$
\end{lemma}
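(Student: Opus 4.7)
The plan is to derive this directly from Stirling's formula, since both gamma functions have arguments tending to infinity (with $b$ fixed, so $z-b \to \infty$ at the same rate as $z$).

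First I would recall Stirling's asymptotic expansion in the form
\[
\log\Gamma(w) = \left(w-\tfrac12\right)\log w - w + \tfrac12 \log(2\pi) + O(1/w)
\]
as $w\to\infty$ in any sector $|\arg w|\le \pi-\varepsilon$. For the applications in the paper, $z$ is real and large, so no subtlety about the branch arises. Applying this with $w=z$ and then with $w=z-b$ and subtracting cancels the constant $\tfrac12\log(2\pi)$ and most of the linear-in-$w$ terms, leaving
\[
\log\frac{\Gamma(z)}{\Gamma(z-b)} = \left(z-\tfrac12\right)\log z - \left(z-b-\tfrac12\right)\log(z-b) - b + O(1/z).
\]

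Next I would expand $\log(z-b) = \log z + \log(1-b/z) = \log z - b/z + O(1/z^2)$. Multiplying by the prefactor $(z-b-\tfrac12)$ gives
\[
\left(z-b-\tfrac12\right)\log(z-b) = \left(z-b-\tfrac12\right)\log z - b + O(1/z),
\]
since $(z-b-\tfrac12)\cdot(-b/z) = -b + O(1/z)$. Substituting back, the two $\log z$ terms combine as $(z-\tfrac12)\log z - (z-b-\tfrac12)\log z = b\log z$, and the two $-b$ terms cancel, yielding
\[
\log\frac{\Gamma(z)}{\Gamma(z-b)} = b\log z + O(1/z) = b\log z + o(1),
\]
as claimed.

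There is essentially no obstacle here beyond careful bookkeeping; the only point worth noting is that when $b$ is such that $z-b$ comes near a nonpositive integer, the left-hand side is singular, but since we take $z\to\infty$ with $b$ fixed this is harmless. Alternatively, a more hands-off approach would be simply to cite the standard asymptotic \cite[5.11.12]{Nist} in the form $\Gamma(z+\alpha)/\Gamma(z+\beta) = z^{\alpha-\beta}(1+O(1/z))$ as $z\to\infty$, with $\alpha=0$ and $\beta=-b$, and then take logarithms; the two routes are equivalent.
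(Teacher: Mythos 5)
Your proposal is correct and takes essentially the same route as the paper: the paper simply cites Stirling's formula and the NIST reference~\cite[5.11.12]{Nist} (which is precisely the ratio asymptotic $\Gamma(z+\alpha)/\Gamma(z+\beta)\sim z^{\alpha-\beta}$ you mention at the end), while you supply the elementary bookkeeping that the paper leaves implicit. The algebra checks out.
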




\begin{proof}
[Proof of Theorem \ref{QNprofile.thm}]Let us define $G_{t}(\alpha)$ so that
\[
g_{t}(\alpha)=g_{0}(\alpha)+G_{t}(\alpha),
\]
where the formula for $G_{t}(\alpha)$ can be read off from \eqref{gtAlpha} and \eqref{gtEqual}. We
then define $G_{t}^{N}$ at numbers of the form $j/N,$ by
\begin{equation}
G_{t}^{N}(j/N)=\frac{1}{N}\log\left(  \frac{1}{N^{Ntb}}\prod_{m=0}^{Nt-1}%
\frac{\Gamma(j+1+m(a-b))}{\Gamma(j+1+m(a-b)-b)}\right)  ,\label{GtN}%
\end{equation}
so that, by \eqref{ajnt}, we have%
\[
\left\vert a_{j;N}^{t}\right\vert ^{1/N}=\left\vert a_{j;N}\right\vert
^{1/N}e^{G_{t}^{N}(j/N)}.
\]
Then we compute that%
\begin{align}
\left\vert a_{j;N}^{t}\right\vert ^{1/N}-e^{g_{t}(j/N)}  & =\left(  \left\vert
a_{j;N}\right\vert ^{1/N}-e^{g_{0}(j/N)}\right)  e^{G_{t}^{N}(j/N)}%
\nonumber\\
& +e^{g_{0}(j/N)}(e^{G_{t}^{N}(j/N)}-e^{G_{t}(j/N)}).\label{aMinusEg}%
\end{align}
Provided that $G_{t}^{N}(j/N)$ is bounded above, uniformly in $j$ and $N,$ the
first term on the right-hand side of (\ref{aMinusEg}) will tend to zero
uniformly in $j$ as $N\rightarrow\infty,$ by Assumption \ref{KZcoeff.assumption}. Since, also,
$e^{g_{0}(\alpha)}$ is bounded, it will then suffice to show that
$e^{G_{t}^{N}(j/N)}$ converges to $e^{G_{t}(j/N)}$ uniformly in $j$ as
$N\rightarrow\infty.$

\textbf{The case }$a>b.$ For the case $a>b,$ we will show that $G_{t}%
^{N}(j/N)$ converges uniformly to $G_{t}(j/N)$ as $N\rightarrow\infty,$ where
$G_{t}^{N}$ is as in (\ref{GtN}) and from \eqref{gtAlpha},
\[
G_{t}(\alpha)=\frac{b}{a-b}\{[\alpha+t(a-b)]\log[\alpha+t(a-b)]-\alpha
\log\alpha\}-bt.
\]
This uniform convergence will also give a uniform upper bound on $G_{t}%
^{N}(j/N).$ 

Define%
\begin{equation}
w=j+1+m(a-b)-b,\label{ww}%
\end{equation}
which is the quantity appearing in the gamma functions in the denominator in
(\ref{GtN}). As discussed at the beginning of this subsection, there will be
some $j_{0}\geq0$ such that for $j<j_{0},$ the value of $w$ is a nonpositive
integer for some $m$---causing $a_{j;N}^{t}$ to be zero---while for $j\geq
j_{0},$ this does not occur. Even if $j\geq j_{0},$ there may be some values
of $m$ for which $w$ is negative or very close to zero. We therefore choose
$m_{0}^{j}$ so that
\begin{equation}
j+1+m_{0}^{j}(a-b)-b\geq a-b.\label{m0def}%
\end{equation}
(This particular lower bound will be convenient below.) When $j\geq a-1,$ we
may take $m_{0}^{j}=0.$ Thus, there are only finitely many $j$'s for which
$m_{0}^{j}$ is not zero. For these $j$'s, the finitely many values of $m$ with
$m<m_{0}^{j}$ will not affect the large-$N$ asymptotics of the expression in
(\ref{GtN}).

We then rewrite (\ref{GtN}) in a way suggested by applying
(\ref{GammaOverGamma}) with
\begin{equation}
z=j+1+m(a-b),\label{zz}%
\end{equation}
namely%
\begin{align}
G_{t}^{N}(j/N) &  \approx\frac{1}{N}\sum_{m=m_{0}^{j}}^{Nt-1}\left(
\log\left(  \frac{\Gamma(j+1+m(a-b))}{\Gamma(j+1+m(a-b)-b)}\right)
-b\log(j+1+m(a-b))\right)  \label{second}\\
&  +\frac{1}{N}\sum_{m=m_{0}^{j}}^{Nt-1}\left(  b\log(j+1+m(a-b))-b\log
N\right)  ,\label{third}%
\end{align}
where the symbol $\approx$ indicates that we are dropping the terms with
$m<m_{0}^{j}.$

Now, the sum in (\ref{second}) equals $t$ times the average of the first approximately $Nt$
terms of the sequence%
\begin{equation}
c_{m}^{j}=\log\left(  \frac{\Gamma(j+1+m(a-b))}{\Gamma(j+1+m(a-b)-b)}\right)
-b\log(j+1+m(a-b)), \label{cmj}%
\end{equation}
which tends to zero as $m\rightarrow\infty,$ by applying (\ref{GammaOverGamma}%
) with $z=j+1+m(a-b)$.

Moreover, if $m_{0}^{j}$ is as in (\ref{m0def}), then the quantity $z=w+b$ in
(\ref{zz}) is at least $a,$ so that $z-b=w$ is at least $a-b>0.$ Then
using (\ref{GammaOverGamma}), we can see that the function
\[
z\mapsto\log\frac{\Gamma(z)}{\Gamma(z-b)}-b\log(z)
\]
is bounded for $z$ in the interval $[\max(a,1),\infty).$ Thus, the sum in
(\ref{second}) tends to zero as $N$ tends to infinity, uniformly in $j.$

We then write the sum in (\ref{third}) using the notation $\alpha_{j}=j/N$ as
\begin{equation}
\frac{b}{N}\sum_{m=m_{0}^{j}}^{Nt-1}\log\left(  \alpha_{j}+\frac{1}{N}%
+\frac{m}{N}(a-b)\right)  .\label{RiemannSum}%
\end{equation}
This is a Riemann sum approximation to the quantity%
\[
b\int_{0}^{t}\log[\alpha_{j}+x(a-b)]~dx=G_{t}(\alpha_{j}).
\]
By (\ref{m0def}), the argument of the logarithm in (\ref{RiemannSum}) is
always at least $a-b,$ which is the lattice spacing in the Riemann sum. Then
since the log function is increasing, we can estimate the sum from above and
below by integrals, as in the integral test for convergence of sums. It is
then straightforward to see that we get convergence of (\ref{third}) to
$G_{t}(\alpha_{j}),$ uniformly in $j.$ Since we have already shown that
(\ref{second}) tends to zero uniformly in $j,$ we conclude that $G_{t}%
^{N}(j/N)$ converges uniformly to $G_{t}(j/N)$. 

\textbf{The case }$a<b.$ This case is extremely similar to the case $a>b,$
except that we only consider $j\geq Nt(b-a).$ We again use Lemma
\ref{bottom.lem}, which tells us in this case that there will be some
$j_{0}\geq0$ such that $a_{j;N}^{t}$ is zero for $j<Nt(b-a)+j_{0}$ but nonzero otherwise.

\textbf{The case }$a=b,$ with $b>0.$ In this case, from \eqref{gtEqual},
\begin{equation}
G_{t}(\alpha)=bt\log\alpha,\label{GtEqual}%
\end{equation}
which approaches $-\infty$ as $\alpha$ approaches 0. In this case, we will not
get uniform convergence of $G_{t}^{N}(j/N)$ to $G_{t}(j/N)$ but will still get
uniform convergence of $e^{G_{t}^{N}(j/N)}$ to $e^{G_{t}(j/N)}.$ This
convergence guarantees a uniform upper bound on $G_{t}^{N}(j/N).$ We will
divide the analysis into two cases, the case in which $j/N$ is small, in which
case both $e^{G_{t}^{N}(j/N)}$ and $e^{G_{t}(j/N)}$ will be close to zero, and
the case in which $j/N$ is not small, in which case, we can argue similarly to
the case $a>b.$

When $a=b,$ all the factors in the product on the right-hand side of
(\ref{GtN}) are equal and we obtain%
\begin{equation}
e^{G_{t}^N(j/N)}=\left(  \frac{1}{N^{b}}\frac{\Gamma(j+1)}{\Gamma(j+1-b)}%
\right)  ^{t}\label{ajFormula}%
\end{equation}
and this quantity tends to zero as $N\rightarrow\infty,$ for each fixed $j.$

We now pick some $\delta\in(0,1)$ and divide our analysis into two cases:
$j\leq\delta N$ and $j>\delta N.$ In the first case, we note that the digamma
function $\psi(z)=\Gamma^{\prime}(z)/\Gamma(z)$ is increasing for $z>0,$ as a
consequence of a standard integral representation. We can then easily verify
that $\Gamma(j+1)/\Gamma(j+1-b)$ is increasing for $j>b-1.$ Thus, for $j>b-1,$
we have%
\[
\frac{1}{N^{b}}\frac{\Gamma(j+1)}{\Gamma(j+1-b)}\leq\frac{1}{N^{b}}%
\frac{\Gamma(\delta N+1)}{\Gamma(\delta N+1-b)}.
\]
But by (\ref{GammaOverGamma}) with $z=\delta N+1,$ we have%
\[
\lim_{N\rightarrow\infty}\frac{1}{N^{b}}\frac{\Gamma(\delta N+1)}%
{\Gamma(\delta N+1-b)}=\delta^{b}.
\]
Thus, for any $\delta,$ if $N$ is large enough, the conditions $j\leq\delta N$ and $j>b-1$ give
\[
\frac{1}{N^{b}}\frac{\Gamma(j+1)}{\Gamma(j+1-b)}\leq2\delta^{b}.
\]

Thus, given any $\varepsilon>0,$ if we choose $\delta$ small enough, then for
all sufficiently large $N,$ we will have
\[
e^{G_{t}^{N}(j/N)}\leq\varepsilon/2;\quad e^{G_{t}(j/N)}\leq\varepsilon/2,
\]
for all $j/N\leq\delta.$ (The finitely many $j$'s with $j\leq b-1$ cause no
problem, since the right-hand side of (\ref{ajFormula}) tends to zero as
$N\rightarrow\infty$ with $j$ fixed.) With $\delta$ chosen in this way, we now
consider the case $j>\delta N.$ We compute from (\ref{GtN}) and (\ref{GtEqual}%
) that
\[
G_{t}^{N}(j/N)-G_{t}(j/N)=t\left(  \log\left(  \frac{\Gamma(j+1)}%
{\Gamma(j+1-b)}\right)  -b\log(j)\right)  .
\]
This quantity tends to zero as $N\rightarrow\infty,$ uniformly in $j>\delta
N,$ as a consequence of (\ref{GammaOverGamma}). 
\end{proof}

\section{The push-forward theorem\label{PushForward.sec}}

We now explore how the roots of $P_{t}^{N}$ or $Q_{t}^{N}$ move as $t$ varies,
generalizing Idea \ref{radialMotion.idea} in the case of repeated
differentiation. The results of this section are parallel to results of
\cite[Section 3]{HHJK2}, where we investigated how the zeros of polynomials
evolve under the heat flow.

In this section, we assume that the exponential profile $g_{t}$ of $Q_{t}%
^{N},$ as computed in Theorem \ref{QNprofile.thm}, is concave, so that Theorem
\ref{KZ.thm} applies. In that case, the exponential profile $h_{t}$ in
Proposition \ref{PNprofile.prop} will also be concave. By Theorem
\ref{QNprofile.thm}, concavity of $g_{t}$ will hold if $b\geq0$ (assuming, of
course, concavity of $g_{0}$). As we will see in Section \ref{bLessZero.sec},
concavity of $g_{t}$ may also hold when $b<0,$ depending on the choice of
$g_{0}.$ For the desired results to make sense, we need to assume
\textit{strict} concavity of $g_{0}$ (Remark \ref{pushAssumptions.remark}). We
summarize these assumptions as follows.

\begin{assumption}
\label{r0.assumption}The function $g_{0}$ is \emph{strictly} concave on
$(0,1]$ and the function $g_{t}$ is concave on the interval $(\alpha_{\min
}^{t},1].$
\end{assumption}

We let $\mu_{0}$ be the limiting root distribution of $P_{0}^{N}$ and we
define a function $\alpha_{0}:[0,\infty)$ by
\begin{equation}
\alpha_{0}(r)=\mu_{0}(D_{r}),\label{alpha0def}%
\end{equation}
where $D_{r}$ is the closed disk of radius $r$ centered at 0. The assumption
that $g_{0}$ is strictly concave guarantees that the left-derivative
$g_{0}^{\prime}$ cannot be constant on any interval, so that $\mu_{0}$ does
not give mass to any circle. Thus, $\alpha_{0}$ is continuous---but not
necessarily strictly increasing.

If $\mu_{0}$ is sufficiently regular, its Cauchy transform $m_{0},$ defined by%
\begin{equation}
m_{0}(z)=\int_{\mathbb{C}}\frac{1}{z-w}~d\mu_{0}(w), \label{m0integral}%
\end{equation}
exists as an absolutely convergent integral for every $z.$ By an elementary
calculation, $m_{0}$ satisfies
\begin{equation}
m_{0}(z)=\frac{\alpha_{0}(\left\vert z\right\vert )}{z},\quad z\neq0.
\label{m0formula}%
\end{equation}
For general $\mu_{0}$, assuming only Assumption \ref{r0.assumption}, we simply
take (\ref{m0formula}) as the definition of $m_{0}$. Note that%
\[
zm_{0}(z)=\alpha_{0}(\left\vert z\right\vert )\geq0.
\]

\begin{idea}
\label{abMotion.idea}The zeros of $P_{t}^{N}$ should move approximately along
curves suggested by the right-hand side of Corollary \ref{rt.cor}, where
$\alpha$ in the corollary is identified with $zm_{0}(z)$ at the starting point
of the curve. That is to say, we take%
\begin{equation}
z(t)=z_{0}\left(  \frac{z_{0}m_{0}(z_{0})+t(a-b)}{z_{0}m_{0}(z_{0})}\right)
^{\frac{b}{b-a}},\quad a\neq b \label{zt1}%
\end{equation}
and%
\begin{equation}
z(t)=z_{0}\exp\left\{  -\frac{bt}{z_{0}m_{0}(z_{0})}\right\}  ,\quad a=b.
\label{zt2}%
\end{equation}
When $a\geq b,$ this motion should hold for all $t>0,$ but if $a>b$, zeros are
also being created at the origin. When $a<b,$ the motion holds until the curve
hits the origin, at which point, the zero ceases to exist.
\end{idea}

It is possible to motivate this idea by a PDE argument, generalizing the
argument in Section \ref{newResults.sec} for the case $a=0$ and~$b=1,$ but
omit the details. We only remark that the curves $z(t)$ in Idea
\ref{abMotion.idea} are the \textbf{characteristic curves} of the PDE
satisfied by the log potential of the limiting root distribution of $P_{t}%
^{N}.$ (See Proposition \ref{charCurves.prop} in Section \ref{hj.sec}).

We will establish Idea \ref{abMotion.idea} rigorously at the bulk level in
Theorem \ref{push.thm} and Corollary \ref{push.cor} below. We now sketch the
argument for this bulk result. We let $\mu_{t}$ and $\sigma_{t}$ denote the
limiting root distributions of $P_{t}^{N}$ and $Q_{t}^{N},$ respectively. In
particular, $\mu_{0}=\sigma_{0}$ is the limiting root distribution of
$P_{0}^{N}=Q_{0}^{N}.$

Let us consider at first the degree-nondecreasing case, $a\geq b$,
where~$\alpha_{\min}^{t}=0.$ Define for $t\geq0,$ a map $r_{t}%
:[0,1]\rightarrow\lbrack r_{\mathrm{in}}(t),r_{\mathrm{out}}(t)]$, where
$r_{\mathrm{in}}(t)$ and $r_{\mathrm{out}}(t)$ are defined by (\ref{InnerOuter}), by
\[
r_{t}(\alpha)=e^{-g_{t}^{\prime}(\alpha)}.
\]
If $r_{t}$ is continuous and strictly increasing on $[0,1],$ it has a
continuous inverse---and by Theorem \ref{KZ.thm}, that inverse will be the
function $\alpha_{0}$ in (\ref{alpha0def}). Under Assumption
\ref{r0.assumption}, the function $\alpha_{0}$ is continuous and it is then
easy to see that the distribution of $\alpha_{0}$ with respect to $\mu_{0}$ is
uniform on $[0,1].$ That is, pushing forward the radial part of $\mu_{0}$ by
$\alpha_{0}$ gives the uniform measure on $[0,1].$

Meanwhile, according to Theorem \ref{KZ.thm}, the radial part of the limiting
root distribution $\sigma_{t}$ of $Q_{t}^{N}$ is the push-forward of the
uniform measure on $[0,1]$ under $r_{t}.$ Note that we start with the
\textit{same} uniform measure on $[0,1]$ for all $t.$ We therefore have a
push-forward result: The map
\[
r_{t}\circ\alpha_{0}
\]
will take the radial part of $\mu_{0}$ to the uniform measure on $[0,1]$ and
then to the radial part of $\mu_{t}.$ Meanwhile, Corollary \ref{rt.cor} tells
us that for $a>b,$ we have%
\[
(r_{t}\circ\alpha_{0})(r)=\left(  \frac{\alpha_{0}(r)+t(a-b)}{\alpha_{0}%
(r)}\right)  ^{\frac{b}{b-a}}r,
\]
with a limiting formula for $a=b.$

The preceding discussion leads to the following definition of a transport map
in the case $a\geq b,$ with a natural modification in the case $a<b.$

\begin{definition}
\label{Tt.def}When $a\geq b,$ we define a transport map $T_{t}$ as follows:%
\[
T_{t}(re^{i\theta})=(r_{t}\circ\alpha_{0})(r)\cdot e^{i\theta}.
\]
Explicitly, we have
\begin{equation}
T_{t}(w)=%
\begin{cases}
w\left(  \frac{\alpha_{0}(|w|)+t(a-b)}{\alpha_{0}(|w|)}\right)  ^{\frac
{b}{b-a}}\quad & a>b\\
we^{-\frac{bt}{\alpha_{0}(|w|)}} & a=b,
\end{cases}
\label{Tt1}%
\end{equation}
where $\alpha_{0}$ is as in (\ref{alpha0def}). When $a<b$, we define $T_{t}$
by
\begin{equation}
T_{t}(w)=%
\begin{cases}
w\left(  \frac{\alpha_{0}(|w|)-t(b-a))}{\alpha_{0}(|w|)}\right)  ^{\frac
{b}{b-a}}\quad & \alpha_{0}(|w|)\geq t(b-a)\\
0 & \text{otherwise}%
\end{cases}
. \label{Tt2}%
\end{equation}

\end{definition}

We note that the formulas on the right-hand side of (\ref{Tt1}) and
(\ref{Tt2}) agree with those in (\ref{zt1}) and (\ref{zt2}), if we
identify---as in (\ref{m0formula})---the quantity $z_{0}m_{0}(z_{0})$ in
(\ref{zt1}) and (\ref{zt2}) with the quantity $\alpha_{0}(\left\vert
w\right\vert )$ in (\ref{Tt1}) and (\ref{Tt2}).

The main result is the following. Compare Theorem 3.3 in \cite{HHJK2} for
polynomials evolving under the heat equation.

\begin{theorem}[Push-forward result for $Q^N_t$]
\label{push.thm}Suppose Assumption \ref{r0.assumption} holds, along with our
usual Assumption \ref{InitialPoly.assumption}. Suppose also that the
exponential profile $g_{t}$ of $Q_{t}^{N}$ is concave, which will hold, for
example, if $b>0.$ Finally, if $a=b,$ assume $b>0.$ Let $\sigma_{t}$ be the
limiting root distribution of the polynomial $Q_{t}^{N}$ in Definition
\ref{FlowGeneralAB.def}. Then%
\begin{equation}
\sigma_{t}=(T_{t})_{\#}(\mu_{0}), \label{Sigmat}%
\end{equation}
where $(T_{t})_{\#}$ denotes push-forward by $T_{t}.$
\end{theorem}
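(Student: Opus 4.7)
The plan is to apply Theorem \ref{KZ.thm} to both the initial polynomial $P^{N}$ and to $Q_{t}^{N}$, and then to compose the two radial transport maps. By Theorem \ref{QNprofile.thm} (together with the hypothesis that $g_{t}$ is concave), the polynomial $Q_{t}^{N}$ satisfies Assumptions \ref{gProperties.assumption}--\ref{KZcoeff.assumption}, so $\sigma_{t}$ exists and is rotationally invariant with radial distribution equal to the push-forward under $r_{t}(\alpha):=e^{-g_{t}'(\alpha)}$ of the uniform distribution on $[\alpha_{\min}^{t},1]$, together with an atom of mass $\alpha_{\min}^{t}$ at the origin. Likewise $\mu_{0}$ is rotationally invariant with radial distribution the push-forward of $\mathrm{Unif}[0,1]$ under $r_{0}(\alpha):=e^{-g_{0}'(\alpha)}$. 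Since $T_{t}$ acts only on the modulus (it is of the form $w\mapsto f(|w|)\cdot w/|w|$), and since $\mu_{0}$ is rotationally invariant, $(T_{t})_{\#}\mu_{0}$ is also rotationally invariant; hence it suffices to check the identity on radial parts.

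Next I would show that $\alpha_{0}$ inverts $r_{0}$ on the support of the radial part of $\mu_{0}$. By the strict concavity of $g_{0}$, the left derivative $g_{0}'$ is strictly decreasing, so $r_{0}:[0,1]\to[r_{\mathrm{in}}(0),r_{\mathrm{out}}(0)]$ is continuous and strictly increasing. Consequently the function $\alpha_{0}(r)=\mu_{0}(D_{r})$ defined in (\ref{alpha0def}) coincides with $r_{0}^{-1}$ on $[r_{\mathrm{in}}(0),r_{\mathrm{out}}(0)]$, and pushes the radial part of $\mu_{0}$ forward to the uniform distribution on $[0,1]$. Composing, the radial map $r_{t}\circ\alpha_{0}$ transports the radial part of $\mu_{0}$ to that of $\sigma_{t}$, where we restrict to the set $\{\alpha_{0}(|w|)>\alpha_{\min}^{t}\}$ in the case $a<b$ (the complementary set, having $\mu_{0}$-mass exactly $\alpha_{\min}^{t}=t(b-a)$, is the preimage of the atom at the origin).

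A direct substitution using Corollary \ref{rt.cor} then identifies $r_{t}\circ\alpha_{0}$ with the radial part of $T_{t}$: for $a\neq b$,
\begin{equation*}
r_{t}(\alpha_{0}(r))=r_{0}(\alpha_{0}(r))\cdot\left(\frac{\alpha_{0}(r)+t(a-b)}{\alpha_{0}(r)}\right)^{\!\tfrac{b}{b-a}}=r\left(\frac{\alpha_{0}(r)+t(a-b)}{\alpha_{0}(r)}\right)^{\!\tfrac{b}{b-a}},
\end{equation*}
which matches $|T_{t}(w)|$ from (\ref{Tt1})--(\ref{Tt2}) under $r=|w|$, with the analogous formula $r\,e^{-bt/\alpha_{0}(r)}$ in the limiting case $a=b$. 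Combining this radial identity with the rotational invariance of both $\sigma_{t}$ and $(T_{t})_{\#}\mu_{0}$ yields (\ref{Sigmat}).

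The main technical obstacle I anticipate is the careful bookkeeping in the degree-decreasing case $a<b$. Here one must justify that the mass $t(b-a)$ that $\sigma_{t}$ assigns to the origin (per Theorem \ref{KZ.thm} applied to $g_{t}$ with $\alpha_{\min}^{t}=t(b-a)$) is exactly the $\mu_{0}$-mass of the set $\{w:\alpha_{0}(|w|)\leq t(b-a)\}$ on which $T_{t}$ collapses to zero, and to verify that the radial push-forward argument above extends continuously across the boundary $\alpha_{0}(|w|)=t(b-a)$. Both follow from the identity $(\alpha_{0})_{\#}(\text{radial part of }\mu_{0})=\mathrm{Unif}[0,1]$ established above, but care is needed because $r_{t}$ may not be strictly increasing on $[\alpha_{\min}^{t},1]$ when $g_{t}$ is concave but not strictly concave, in which case $\sigma_{t}$ may carry additional atoms on circles that must be matched on the $\mu_{0}$-side via the corresponding level sets of $\alpha_{0}$.
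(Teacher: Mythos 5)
Your proof is correct and follows essentially the same route as the paper: reduce to radial parts by rotational invariance, show $\alpha_{0}$ pushes the radial part of $\mu_{0}$ to $\mathrm{Unif}[0,1]$ (using strict concavity of $g_{0}$ so that $r_{0}$ is strictly increasing and $\alpha_{0}=r_{0}^{-1}$), observe that $r_{t}$ pushes $\mathrm{Unif}[0,1]$ to the radial part of $\sigma_{t}$ via Theorem \ref{KZ.thm}, and then identify $r_{t}\circ\alpha_{0}$ with $|T_{t}|$ through Corollary \ref{rt.cor}. Your closing worry about non-strict concavity of $g_{t}$ is unnecessary: since you are composing two push-forwards, the identity $(r_{t}\circ\alpha_{0})_{\#}\mu_{0}^{\mathrm{rad}}=(r_{t})_{\#}\mathrm{Unif}[0,1]=\sigma_{t}^{\mathrm{rad}}$ holds automatically, atoms and all, so no separate matching argument on level sets is required.
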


\begin{remark}
\label{pushAssumptions.remark}Theorem \ref{push.thm} cannot hold as stated
without Assumption \ref{r0.assumption}. If, for example, the initial
polynomials $P_{0}^{N}$ are the Kac polynomials (corresponding to the case
$g_{0}\equiv0$), then $\mu_{0}$ will be the uniform measure on the unit
circle. But the limiting root distribution $\sigma_{t}$ of $Q_{t}^{N}$ will be
absolutely continuous with respect to Lebesgue measure on the plane, for
$b>0,$ since the exponential profile $g_{t}$ will be strictly concave when
$g_{0}\equiv0.$ In that case, $\sigma_{t}$ cannot be the push-foward of
$\mu_{0}$ under any rotationally invariant map.
\end{remark}

In the Kac case, one can work around this difficulty by \textquotedblleft
randomizing\textquotedblright\ the measure $\mu_{0}.$ This amounts to
attaching a random variable $\alpha\in\lbrack0,1]$ to each point in the unit
circle. Then we let $\tilde{\mu}_{0}$ be the joint distribution of $w$ and
$\alpha,$ where $w$ is uniform on the unit circle and $\alpha$ is uniform on
$[0,1],$ independent of $w.$ (That is, we imagine that even if $\mu_{0}$
concentrates onto the unit circle, the quantity $\alpha:=\alpha_{0}(\left\vert
w\right\vert )$ in (\ref{alpha0def}) is still uniformly distributed between
$0$ and $1.$) Then $\sigma_{t}$ will be the push-forward of $\tilde{\mu}_{0}$
under a modified transport map $\tilde{T}_{t}(w,\alpha)$, where $\alpha
_{0}(\left\vert w\right\vert )$ in the definition of $T_{t}(w)$ is replaced by
$\alpha.$ Thus, for $a>b,$ we would have%
\[
\tilde{T}_{t}(w,\alpha)=w\left(  \frac{\alpha+t(a-b)}{\alpha}\right)
^{\frac{b}{b-a}},\quad a>b.
\]
Indeed, the push-forward property of $\tilde{T}_{t}$ is nothing but Theorem
\ref{KZ.thm} for the polynomials $Q_{t}^{N}$, using Corollary \ref{rt.cor}.
One can do something similar in general by randomizing on each circle that is
assigned positive mass by $\mu_{0},$ but we omit the details of this construction.

We then restate Theorem \ref{push.thm} in terms of $P_{t}^{N},$ with separate statements for the case
$a\geq b$ and $a<b$.

\begin{corollary}[Push-forward result for $P^N_t$]
\label{push.cor}Continue with the hypotheses of Theorem \ref{push.thm} and let
$\mu_{t}$ be the limiting root distribution of $P_{t}^{N}.$ Then when $a\geq
b,$ we have
\begin{equation}
\mu_{t}=\frac{1}{1+t(a-b)}((T_{t})_{\#}(\mu_{0})+t(a-b)\delta_{0}).
\label{mut1}%
\end{equation}
When $a<b,$ let $A_{t}$ be the set of $w$ with $\alpha_{0}(\left\vert
w\right\vert )\geq t(b-a).$ Then we have
\begin{equation}
\mu_{t}=\frac{1}{1-t(b-a)}(T_{t})_{\#}(\left.  \mu_{0}\right\vert _{A_{t}}).
\label{mut2}%
\end{equation}

\end{corollary}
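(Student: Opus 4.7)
The plan is to relate the empirical root measures of $P_t^N$ and $Q_t^N$ via the factorization $P_t^N(z)=z^{\lfloor Nt(a-b)\rfloor}Q_t^N(z)$ from Definition \ref{FlowGeneralAB.def}, pass to the large-$N$ limit, and then apply Theorem \ref{push.thm}. The two cases $a\geq b$ and $a<b$ correspond to whether the power of $z$ is contributing extra roots at the origin or is peeling off forced zeros at the origin from $Q_t^N$.

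In the case $a\geq b$, set $k=\lfloor Nt(a-b)\rfloor\geq 0$. Every root of $Q_t^N$ (listed with multiplicity) is a root of $P_t^N$, and in addition $P_t^N$ has $k$ extra roots at $0$, while the degree jumps from $N$ to $N+k$. Thus the empirical root measures satisfy
\[
\mu_{P_t^N}=\frac{N}{N+k}\mu_{Q_t^N}+\frac{k}{N+k}\delta_0.
\]
Sending $N\to\infty$, using $k/N\to t(a-b)$ and the convergence $\mu_{Q_t^N}\to\sigma_t=(T_t)_{\#}\mu_0$ from Theorem \ref{push.thm}, I obtain (\ref{mut1}) immediately.

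In the case $a<b$, set $\ell=\lceil Nt(b-a)\rceil=-\lfloor Nt(a-b)\rfloor\geq 0$, so that $P_t^N(z)=Q_t^N(z)/z^{\ell}$ is a polynomial of degree $N-\ell$. By (\ref{cnjt0}) and Lemma \ref{bottom.lem}, $Q_t^N$ has at least $\ell$ zeros at the origin (and possibly a bounded additional number from Lemma \ref{bottom.lem}), so that the nontrivial roots of $P_t^N$ are precisely the non-origin roots of $Q_t^N$, up to an error of $O(1)$ zeros which contributes $O(1/N)$ to the empirical measure and is negligible in the limit. Therefore
\[
\mu_{Q_t^N}=\frac{\ell}{N}\delta_0+\frac{N-\ell}{N}\mu_{P_t^N}+O(1/N).
\]
Passing to the limit gives $\sigma_t=t(b-a)\delta_0+(1-t(b-a))\mu_t$, which rearranges to
\[
\mu_t=\frac{1}{1-t(b-a)}\bigl(\sigma_t-t(b-a)\delta_0\bigr).
\]

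The remaining point is to identify $\sigma_t-t(b-a)\delta_0$ with $(T_t)_{\#}(\mu_0|_{A_t})$. By Definition \ref{Tt.def}, the map $T_t$ sends $A_t^c=\{w:\alpha_0(|w|)<t(b-a)\}$ to $0$ and acts injectively on $A_t$ with image away from $0$ (because the factor $(\alpha_0(|w|)-t(b-a))/\alpha_0(|w|)$ is strictly positive on $A_t$). Hence
\[
\sigma_t=(T_t)_{\#}\mu_0=(T_t)_{\#}(\mu_0|_{A_t})+\mu_0(A_t^c)\,\delta_0.
\]
Under Assumption \ref{r0.assumption}, $g_0$ is strictly concave, so $\alpha_0$ is continuous and strictly increasing on the support of $\mu_0$; consequently $\alpha_0(|W|)$ is uniformly distributed on $[0,1]$ when $W\sim\mu_0$, which gives $\mu_0(A_t^c)=t(b-a)$ exactly. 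Combining this identification with the limit above yields (\ref{mut2}). The main (minor) obstacle is the bookkeeping around Lemma \ref{bottom.lem}: one must verify that the $O(1)$ extra forced zeros at the origin in $Q_t^N$ do not affect the limit, and that $\alpha_0$ is continuous enough for the clean identity $\mu_0(A_t^c)=t(b-a)$; both follow from the strict concavity assumption on $g_0$.
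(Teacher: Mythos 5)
Your proposal is correct and takes essentially the same route as the paper: factor $P_t^N(z) = z^{\lfloor Nt(a-b)\rfloor} Q_t^N(z)$, pass the exact relation between the empirical measures to the limit, and apply Theorem \ref{push.thm} for $\sigma_t$. Two tiny remarks: the $O(1/N)$ error term is not actually needed, since $\mu_{Q_t^N} = \frac{\ell}{N}\delta_0 + \frac{N-\ell}{N}\mu_{P_t^N}$ holds exactly (the extra Lemma \ref{bottom.lem} zeros at the origin are shared by both polynomials); and the factor in $T_t$ is not strictly positive at the boundary $\alpha_0(|w|) = t(b-a)$, but that circle has $\mu_0$-measure zero under Assumption \ref{r0.assumption}, so the identification of $\sigma_t - t(b-a)\delta_0$ with $(T_t)_{\#}(\mu_0|_{A_t})$ goes through unchanged.
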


\begin{proof}
In the case $a\geq b,$ the limiting root measure $\mu_{t}$ of $P_{t}^{N}$ is
obtained from the limiting root measure $\sigma_{t}$ of $Q_{t}^{N}$ by adding
a multiple of a $\delta$-measure at the origin and then rescaling the
resulting measure to be probability measure, so that (\ref{mut1}) follows from
(\ref{Sigmat}). In the case $a<b,$ suppose we push forward the restriction of
$\mu_{0}$ to $A_{t}$ by the map $T_{t}$ in Definition \ref{Tt.def}. Then since
$T_{t}$ maps the complement of $A_{t}$ to 0, we get the measure $\sigma_{t},$
with a multiple of a $\delta$-measure at the origin removed. But the result is
then just the measure $\mu_{t},$ up to scaling by a constant.
\end{proof}

We now prove Theorem \ref{push.thm}.

\begin{proof}
[Proof of Theorem \ref{push.thm}]Since the measures on both sides of
(\ref{Sigmat}) are rotationally invariant, it suffices to check that they have
the same radial part (i.e., the same distribution of the radius). But this
claim follows from the discussion prior to the statement of the theorem.
Push-forward by $\alpha_{0}$ takes the radial part of $\mu_{0}$ to the uniform
measure on $[0,1]$ and then push-forward by $r_{t}$ take the uniform measure
on $[0,1]$ to the radial part of $\sigma_{t},$ by Theorem \ref{KZ.thm}. The
composite map $r_{t}\circ\alpha_{0}$ is the computed by Corollary \ref{rt.cor}
and agrees with $T_{t}.$ (In the case $a<b,$ both $r_{t}\circ\alpha_{0}(z)$
and $T_{t}(z)$ give the value 0 when $\alpha_{0}(\left\vert w\right\vert
)<\alpha_{\min}^{t}=t(b-a).$)
\end{proof}

\section{Free probability interpretation: additive self-convolution\label{fracConvolve.sec}}

In the paper \cite{COR}, Campbell, O'Rourke, and Renfrew establish a
connection between the repeated differentiation flow in the radial case and an
operation that the authors call fractional free convolution for rotationally
invariant probability measures. This work thus gives the first free
probability interpretation to the repeated differentiation flow, connecting it
to the notion of sums of freely independent $R$-diagonal operators. In this
section, we note that the fractional convolution in \cite{COR} also has a
close connection to the differential flow analyzed in the present paper, in
the case $a=-1,$ $b=1.$ Their results should be compared to work on repeated
differentiation of polynomials with all real roots, as discussed in the
introduction, just after the heuristic derivation of Idea
\ref{singleDeriv.idea}. We refer to the monographs of Nica and Speicher \cite{NicaSpeicherLecture} and 
Mingo and Speicher \cite{MingoSpeicher} for basic information about free probability and $R$-diagonal 
operators.

The authors of \cite{COR} define an operation $(\cdot)^{\oplus k},$ acting on
rotationally invariant probability measures on the plane. Here $k$ is a real
number with $k\geq1.$ The case in which $k$ is an integer has been studied
previously by Haagerup and Larsen \cite{HL} and by K\"{o}sters and Tikhomirov
\cite{KostersTikhomirov}. This operation may be connected to the theory of
$R$-diagonal operators in free probability in two different ways. Let $A$ be an
$R$-diagonal operator with (rotationally invariant) Brown measure $\mu.$ First,
suppose that $k\geq1$ is an integer. Then $\mu^{\oplus k}$ is the Brown
measure of $A_{1}+\cdots+ A_{k},$ where $A_{1},\ldots,A_{k}$ are
freely independent copies of $A.$ Second, suppose that $k\geq1$ is any real
number and let $q$ be a projection freely independent of $A$ with the trace of
$q$ equal to $1/k.$ Then $\mu^{\oplus k}$ is the Brown measure of $kqAq,$
where $qAq$ is viewed as an element of the compressed von Neumann algebra
associated to $q.$ (See Section 4.1 of \cite{COR}.)

We mention two basic examples. First, if $\mu$ is the uniform probability
measure on the unit disk, $\mu^{\oplus k}$ is the uniform probability measure
on the disk of radius $\sqrt{k}.$ Second, if $\mu$ is the uniform probability
measure on the unit circle, $\mu^{\oplus k}$ describes the limiting eigenvalue
distribution of truncations (i.e., corners) of Haar-distributed unitary
matrices. (See the work of \.{Z}yczkowski and Sommers \cite{ZS} in the physics
literature and Petz and R\'{e}ffy \cite{PetzReffy} in the math literature.)
See Sections 5.1.1 and 5.1.2 in \cite{COR}.

Now, Definition 4.1 in \cite{COR} defines $\mu^{\oplus k}$ under the
assumption that $\mu$ is the Brown measure of an $R$-diagonal element. But
then in Eq. (4.7), the authors write a relation between the quantile functions
of $\mu$ and $\mu^{\oplus k}$ that makes sense for arbitrary radial
probability measure. We therefore adopt \cite[Eq. (4.7)]{COR} as the
definition of $\mu^{\oplus k}$ in general.

For our purposes, it is convenient to make a minor rescaling of the flow in
\cite{COR}. We define $(\cdot)^{\hat{\oplus}k}$ so that if $\mu$ is the Brown
measure of $A,$ then $\mu^{\hat{\oplus}k}$ is the Brown measure of $qAq$
(rather than $kqAq$), where, as above, the trace of $q$ equals $1/k.$ Then
$\mu^{\hat{\oplus}k}$ is simply the push-forward of $\mu^{\oplus k}$ under
the map consisting of \textquotedblleft multiplication by $1/k$%
.\textquotedblright

Now suppose that $P_{t}^{N}$ is a polynomial with independent coefficients
satisfying Assumption \ref{InitialPoly.assumption}, undergoing repeated
differentiation, that is, the differential flow with $a=0$ and $b=1.$ Then
Campbell, O'Rourke, and Renfrew apply a squaring operation, denoted $\psi_{2}$
in \cite{COR}, which amounts to considering the polynomial $\hat{P}_{t}^{N}$
given by
\[
\hat{P}_{t}^{N}(z)=P_{t}^{N}(z^{2}).
\]
That is, $\hat{P}_{t}^{N}$ is the polynomial whose zeros are the
\textit{square roots of the zeros of }$P_{t}^{N}$ (taking always \textit{both}
square roots of each zero of $P_{t}^{N}$).

\begin{theorem}
[Campbell, O'Rourke, Renfrew]Let $P_{0}^{N}$ be a sequence of polynomials with
independent coefficients satisfying Assumption \ref{InitialPoly.assumption}
and let $P_{t}^{N}$ be obtained from $P_{0}^{N}$ by repeated differentiation.
Let $\nu_{t}$ denote the limiting root distribution of the polynomial%
\begin{equation}
z\mapsto P_{t}^{N}(z^{2}).\label{CORpoly}%
\end{equation}
Then%
\[
\nu_{t}=\nu_{0}^{\hat{\oplus}\frac{1}{1-t}}.
\]

\end{theorem}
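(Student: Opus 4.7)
The plan is to match the radial quantile functions on both sides. Since the roots of $z\mapsto P_t^N(z^2)$ are the two square roots of each root of $P_t^N$, for every $r\ge 0$ the empirical root measure of $z\mapsto P_t^N(z^2)$ assigns to $D_r$ exactly the mass that $\mu_{P_t^N}$ assigns to $D_{r^2}$. Passing to the limit and using that both $\delta_s$ and $\mu_s$ are rotationally invariant, we get $\delta_s(D_r)=\mu_s(D_{r^2})$ for $s\in\{0,t\}$, so at the level of (generalized) inverses of the radial CDFs $F_{\nu}(r):=\nu(D_r)$,
\[
F_{\delta_s}^{-1}(\alpha)=\sqrt{F_{\mu_s}^{-1}(\alpha)}.
\]

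Second, I would invoke Corollary \ref{push.cor} with $a=0$, $b=1$ to compute $F_{\mu_t}^{-1}$ from $F_{\mu_0}^{-1}$. The transport map is $T_t(w)=w\bigl(1-t/\alpha_0(|w|)\bigr)$ on $A_t=\{w:\alpha_0(|w|)>t\}$, and $\mu_t=\tfrac{1}{1-t}(T_t)_{\#}(\mu_0|_{A_t})$. Because $\alpha_0$ is nondecreasing, $|T_t(\cdot)|$ depends monotonically on $|w|$, so for each $r\ge 0$ the preimage $\{w\in A_t:|T_t(w)|\le r\}$ is an annulus $\{r_t\le |w|\le R\}$ with $\alpha_0(r_t)=t$ and $R$ determined by $R\bigl(1-t/\alpha_0(R)\bigr)=r$. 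Writing $\alpha=F_{\mu_t}(r)$ and using $\mu_0(\{r_t\le|w|\le R\})=\alpha_0(R)-t$ yields $(1-t)\alpha+t=\alpha_0(R)$, whence
\[
F_{\mu_t}^{-1}(\alpha)=F_{\mu_0}^{-1}\!\bigl((1-t)\alpha+t\bigr)\cdot\frac{(1-t)\alpha}{(1-t)\alpha+t}.
\]
Taking square roots and setting $k=1/(1-t)$, so that $(1-t)\alpha+t=(\alpha+k-1)/k$, gives
\[
F_{\delta_t}^{-1}(\alpha)=F_{\delta_0}^{-1}\!\left(\frac{\alpha+k-1}{k}\right)\sqrt{\frac{\alpha}{\alpha+k-1}}.
\]

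Finally, I would compare this expression with the definition of $\nu^{\hat\oplus k}$ at the level of the radial quantile function recorded in \cite[Eq.~(4.7)]{COR}, after accounting for the rescaling by $1/k$ that converts $\oplus$ into $\hat\oplus$. Matching the two expressions identifies $\delta_t$ with $\delta_0^{\hat\oplus(1/(1-t))}$. As a sanity check, in the Weyl case $\mu_0$ is uniform on the unit disk, giving $F_{\delta_0}^{-1}(\alpha)=\alpha^{1/4}$, and the displayed formula reproduces the Brown measure of the compression of a circular element by a free projection of trace $1-t$.

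The main obstacle is the (essentially bookkeeping) task of reconciling the normalization conventions of \cite[Eq.~(4.7)]{COR} with the formula produced by the push-forward theorem, in particular tracking the $\oplus$-versus-$\hat\oplus$ dilation by $1/k$. A secondary technical point is that Assumption~\ref{r0.assumption} only gives continuity of $\alpha_0$, not strict monotonicity; so one should interpret $F_{\mu_0}^{-1}$ as a generalized inverse and prove the identification at the level of the CDFs $F_{\delta_s}$ (equivalently, integrate against bounded radial test functions) rather than manipulate quantile functions pointwise on intervals where $\alpha_0$ might fail to be strictly increasing.
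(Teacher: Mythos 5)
Your final formula for $F_{\delta_t}^{-1}$ is correct and does match what one gets from \cite[Eq.~(4.7)]{COR} after the $\hat\oplus$-rescaling, and the squaring step $F_{\delta_s}^{-1}(\alpha)=\sqrt{F_{\mu_s}^{-1}(\alpha)}$ is sound. Note, however, that the paper does not actually reprove this theorem: it cites \cite[Theorem~4.8]{COR} and only remarks that the proof there goes through with \cite[Eq.~(4.7)]{COR} taken as the definition of the fractional convolution. So your self-contained derivation is a genuinely different route, closer in spirit to the paper's proofs of Theorems \ref{ConvFlow.thm} and \ref{ConvPush.thm}.

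There is, however, a gap in the route you chose for step two. You compute $F_{\mu_t}^{-1}$ by invoking Corollary \ref{push.cor}, but that corollary rests on Assumption \ref{r0.assumption} (\emph{strict} concavity of $g_0$), which is not among the hypotheses of the theorem: the statement requires only Assumption \ref{InitialPoly.assumption}, i.e.\ concavity. This is not a cosmetic issue — Remark \ref{pushAssumptions.remark} points out that the push-forward identity simply fails when $g_0$ is not strictly concave (e.g.\ the Kac case $g_0\equiv 0$, where $\mu_0$ sits on a circle while $\mu_t$ is planar, so $\mu_t$ cannot be any rotationally invariant push-forward of $\mu_0$). Your closing remark about replacing $F_{\mu_0}^{-1}$ by a generalized inverse does not rescue this: the obstruction is not how to invert $\alpha_0$, but that $(T_t)_{\#}(\mu_0|_{A_t})$ is the wrong measure. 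The fix is to bypass Corollary \ref{push.cor} and read off $F_{\mu_t}^{-1}(\alpha)=e^{-h_t'(\alpha)}$ directly from Theorem \ref{KZ.thm}, using Proposition \ref{PNprofile.prop} and Corollary \ref{rt.cor} with $a=0$, $b=1$. With $\beta=(1-t)\alpha+t$ one gets $e^{-h_t'(\alpha)}=e^{-g_0'(\beta)}\cdot\frac{\beta-t}{\beta}=F_{\mu_0}^{-1}(\beta)\cdot\frac{(1-t)\alpha}{(1-t)\alpha+t}$, which is exactly your formula but now valid under Assumption \ref{InitialPoly.assumption} alone; this is also precisely how the paper argues in its proof of Theorem \ref{ConvFlow.thm}. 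The rest of your comparison against \cite[Eq.~(4.7)]{COR} then stands as written.
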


This result is essentially Theorem 4.8 in \cite{COR}. Although the statement
of \cite[Theorem 4.8]{COR} requires that $\nu_{0}$ be the Brown measure of
an $R$-diagonal element, the proof does not use this assumption, provided one
takes \cite[Eq. (4.7)]{COR} as the definition of the fractional free convolution.

Now, if one's goal is to connect the differentiation flow on polynomials to an
operation in free probability, the preceding result certainly achieves the
goal. Nevertheless, the occurrence of $P_{t}^{N}(z^{2})$ rather than
$P_{t}^{N}(z)$ on the right-hand side of (\ref{CORpoly}) is surprising. One
could then ask whether there is a \textit{different} operation on polynomials
that would correspond directly to the fractional free convolution, without the
need for this squaring operation. We will show that this operation is
essentially the $a=-1,$ $b=1$ differential flow.

To motivate this idea, note that%
\[
\frac{1}{2z}\frac{d}{dz}P(z^{2})=P^{\prime}(z^{2}).
\]
Thus, applying ordinary differentiation to $P_{t}^{N}$ is the same as applying
the operator $\frac{1}{2z}\frac{d}{dz}$ to $\hat{P}_{t}^{N}.$ Now, $\hat
{P}_{t}^{N}$ is, by construction, an even polynomial, so applying $\frac{1}%
{z}\frac{d}{dz}$ repeatedly will give another even polynomial; no negative
powers of $z$ will be generated. If we apply $\frac{1}{z}\frac{d}{dz}$ to a
general polynomial, we will get negative powers. If, however, we throw away
those negative powers---as in Definitions \ref{DAB.def} and
\ref{FlowGeneralAB.def}---the evolution of the zeros will be similar to
applying $\frac{1}{z}\frac{d}{dz}$ to an even polynomial.

There is, however, one more point to consider, which is that $\hat{P}_{t}^{N}$
has twice the degree of $P_{t}^{N}.$ Thus, the \textquotedblleft
time\textquotedblright\ in the flow is computed differently by a factor of 2.
That is, if we apply $\frac{1}{z}\frac{d}{dz}$ to a polynomial of degree $N$
rather than $2N,$ we need to rescale $t$ to $t/2.$ The preceding discussion
motivates the following result.

\begin{theorem}[Repeated action of $z^{-1}d/dz$ in terms of $\hat\oplus$]
\label{ConvFlow.thm}Let $P_{0}^{N}$ be a polynomial with independent
coefficients satisfying Assumption \ref{InitialPoly.assumption} and let
$P_{t}^{N}$ be the polynomial obtained by applying the flow in Definition
\ref{FlowGeneralAB.def} with $a=-1$ and $b=1,$ for $t<t_{\max}=1/2.$ That is to say,
$P_t^N$ is obtained by repeatedly applying the operator
\[ \frac{1}{z}\frac{d}{dz}
\]
to $P_0^N$ and then discarding any negative terms that arise. Let
$\mu_{t}$ denote the limiting root distribution of $P_{t/2}^{N}$ (note the
factor of 2). Then%
\[
\mu_{t}=\mu_{0}^{\hat{\oplus}\frac{1}{1-t}}%
\]
for all $t$ with $0\leq t<1.$ In particular, if $P_{0}^{N}$ is a Weyl
polynomial, the limiting root distribution of $P_{t/2}^{N}$ is the uniform
probability measure on a disk of radius $\sqrt{1-t}.$
\end{theorem}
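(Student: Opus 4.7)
The plan is to reduce the equality of measures to a comparison of radial quantile functions. Both sides are rotationally invariant --- the push-forward side by Corollary~\ref{push.cor} applied to the rotationally invariant $\mu_0$ via a rotationally equivariant map, and the $\hat{\oplus}$-side by construction --- so it suffices to match the distributions of $|w|$. I would apply Corollary~\ref{push.cor} with $a=-1$, $b=1$, so that $b-a=2$ and the crucial exponent satisfies $b/(b-a)=1/2$. Since $a<b$, the corollary at time $t/2$ yields
\[
\mu_{t/2} \;=\; \frac{1}{1-t}\,(T_{t/2})_{\#}\bigl(\mu_{0}|_{A_{t/2}}\bigr),
\]
where by (\ref{Tt2}) the transport map acts radially as $T_{t/2}(re^{i\theta})=r\sqrt{(\alpha_0(r)-t)/\alpha_0(r)}\,e^{i\theta}$ on $A_{t/2}=\{w:\alpha_0(|w|)\ge t\}$, and the $1/(1-t)$ factor restores total mass one.

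Next, I would parametrize $\mu_0$ by its radial quantile $Q_{\mu_0}:[0,1]\to[0,r_{\mathrm{out}}]$, well-defined and continuous under Assumption~\ref{r0.assumption} (strict concavity of $g_0$ excludes atoms on circles). Under this parametrization the normalized restriction $\mu_0|_{A_{t/2}}/(1-t)$ corresponds to $\beta$ uniform on $[t,1]$, and after the change of variable $\beta=t+(1-t)\gamma$ with $\gamma$ uniform on $[0,1]$, the radial quantile of $\mu_{t/2}$ becomes
\[
Q_{\mu_{t/2}}(\gamma) \;=\; Q_{\mu_0}\!\bigl(t+(1-t)\gamma\bigr)\sqrt{\frac{(1-t)\gamma}{t+(1-t)\gamma}}.
\]
Setting $k=1/(1-t)$, equivalently $t=(k-1)/k$, this simplifies to
\[
Q_{\mu_{t/2}}(\gamma) \;=\; Q_{\mu_0}\!\left(\tfrac{k-1+\gamma}{k}\right)\sqrt{\tfrac{\gamma}{k-1+\gamma}}.
\]

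Finally, I would match this with the radial quantile of $\mu_0^{\hat{\oplus}k}$. Using the definition $\mu^{\hat{\oplus}k}=(w\mapsto w/k)_{\#}\,\mu^{\oplus k}$ together with the quantile-function formula \cite[Eq.~(4.7)]{COR} for the $\oplus$-operation, one gets exactly $Q_{\mu_0^{\hat\oplus k}}(\gamma)=\tfrac{1}{k}Q_{\mu_0^{\oplus k}}(\gamma)$, and straightforward algebra identifies this with the expression above. As a sanity check, in the Weyl case $Q_{\mu_0}(\alpha)=\sqrt{\alpha}$, my formula gives $Q_{\mu_{t/2}}(\gamma)=\sqrt{(1-t)\gamma}$, i.e.\ the uniform measure on the disk of radius $\sqrt{1-t}$, which matches the explicit claim in the theorem (via the known identity that the uniform-disk measure is stable under $\oplus$ with radius scaling by $\sqrt{k}$, then multiplication by $1/k=(1-t)$). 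The main obstacle is simply the algebraic verification of this last matching; the conceptual reason it succeeds is structural, namely that the exponent $b/(b-a)=1/2$ at $(a,b)=(-1,1)$ is precisely the square root appearing in the Haagerup--Larsen-type relation between the radial CDF of an R-diagonal Brown measure and its $\oplus$-powers --- the very square root that in \cite{COR} had to be installed by hand through the squaring $z\mapsto z^2$, and whose bookkeeping accounts for the factor of two in the time rescaling $t\mapsto t/2$.
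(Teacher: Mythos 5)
Your proof is correct and arrives at exactly the same radial-quantile identity that the paper proves, then closes the argument in the same way, namely by matching against \cite[Eq.~(4.7)]{COR} and noting the factor $1/k=1-t$ that converts $\oplus$ to $\hat\oplus$. The only structural difference is the intermediate step: you route through the push-forward result (Corollary~\ref{push.cor} and the transport map $T_{t/2}$), whereas the paper computes the radial quantile of $\mu_{t/2}$ directly as $e^{-h'_{t/2}(\alpha)}$ from Proposition~\ref{PNprofile.prop}; both computations produce the identical formula $Q_{\mu_{t/2}}(\gamma)=Q_{\mu_0}\bigl(t+(1-t)\gamma\bigr)\sqrt{(1-t)\gamma/(t+(1-t)\gamma)}$. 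This variation costs you a slightly stronger hypothesis --- your invocation of Corollary~\ref{push.cor} silently imports Assumption~\ref{r0.assumption} (strict concavity of $g_0$, needed so $\alpha_0$ is a continuous inverse of the quantile), whereas the paper's direct exponential-profile computation only uses Assumption~\ref{InitialPoly.assumption} as stated in the theorem. If you want your version to match the theorem's stated generality, you should bypass the push-forward corollary and read the radial quantile off $e^{-h'_{t/2}}$ directly, since the quantile function is well-defined and the comparison with \cite{COR} makes sense without any continuity hypothesis on $\alpha_0$.
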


Note that we have (1) changed from the differentiation flow to the
differential flow with $a=-1$ and $b=1$ and (2) rescaled the fractional free
convolution. The advantages of these changes are (1) that we no longer have to
square the variable in the polynomial $P_{t}^{N}$ and (2) we do not have to
rescale the variable of the polynomial by a constant as in Theorem 4.8 of
\cite{COR}. But there still remains a factor of 2 scaling in the time variable
in our Theorem \ref{ConvFlow.thm}. In order to keep the same scaling in the
variable $t$, one could use the differential flow with $a=-1/2, b=1/2$ (Remark
\ref{scaling.remark}). The current approach, however, avoids fractional
derivatives and is more easily motivated.

\begin{proof}
Let $g_0$ be the exponential profile of $P_{0}^{N}$ and let $\mu_{0}$ be the
limiting root distribution. Then the function
\[
r(\alpha):=e^{-g_0^{\prime}(\alpha)},
\]
where $g_0^{\prime}$ is the left-derivative of $g_0,$ is the radial quantile
function of $\mu_{0}.$ That is to say,%
\begin{equation}
r(\alpha)=\inf_{r}\{r\left\vert \mu_{0}(D_{r})\right.  \geq\alpha
\},\label{quantileDef}%
\end{equation}
where $D_{r}$ is the closed disk of radius $r.$ We then apply this result to
the polynomial $P_{t}^{N},$ whose exponential profile is $h_{t},$ in the case
$a=-1$ and $b=1,$ with $t$ replaced by $t/2.$ Using Proposition
\ref{PNprofile.prop} we compute that%
\begin{equation}
e^{-h_{t/2}^{\prime}(\alpha)}=\sqrt{1-t}e^{-g_{0}^{\prime}(\alpha
(1-t)+t)}\sqrt{\frac{\alpha}{\alpha(1-t)+t}}.\label{htOver2}%
\end{equation}

We now compare (\ref{htOver2}) to Eq. (4.7) in \cite{COR}, which computes the
quantile function under the fractional free convolution flow. If $\lambda$ and
$x$ there corresponding to $1-t$ and $\alpha$ here, we see that (\ref{htOver2}%
) agrees with \cite[Eq. (4.7)]{COR}, up to a factor of $1-t,$ after
recognizing that the function $e^{-g^{\prime}(\alpha)}$ in Theorem
\ref{KZ.thm} is the inverse of the radial CDF. The factor of $1-t$ accounts
for the conversion from the original fractional free convolution to its
rescaled version.

The claim about the Weyl case then follows from Example \ref{LOstable.example}, with
$a=-1,$ $b=1,$ and $\beta=1/2,$ after changing $t$ to $t/2.$
\end{proof}

\begin{theorem}
[Push-forward theorem for the fractional free convolution]\label{ConvPush.thm}%
Let $\mu$ be a compactly supported, rotationally invariant probability measure on $\mathbb C$
and define%
\[
\alpha_{0}(r)=\mu(D_{r}),
\]
where $D_{r}$ is the closed disk of radius $r.$ Assume that $\alpha_{0}$ is
continuous and that $\mu$ has no mass at the origin. Let $A_{t}$ be the set of
$w$ with $\alpha_{0}(\left\vert w\right\vert )>t$ and define a map
$T_{t}:A_{t}\rightarrow\mathbb{C}$ by
\[
T_{t}(w)=w\sqrt{\frac{\alpha_{0}(|w|)-t}{\alpha_{0}(|w|)}}.
\]
Then%
\begin{equation}
\mu^{\hat{\oplus}\frac{1}{1-t}}=\frac{1}{1-t}(T_{t})_{\#}(\left.
\mu\right\vert _{A_{t}}),\label{twoMeasures}%
\end{equation}
where $(T_{t})_{\#}$ denotes push-forward by $T_{t}.$
\end{theorem}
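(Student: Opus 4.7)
The plan is to reduce the identity (\ref{twoMeasures}) to an equality of radial quantile functions and then to recognize the resulting formula as the quantile function that \cite[Eq.~(4.7)]{COR} assigns to $\mu^{\hat{\oplus}\frac{1}{1-t}}$. The key observation is that essentially the same quantile computation has already been carried out, in the forward direction, inside the proof of Theorem \ref{ConvFlow.thm}, where it produced formula (\ref{htOver2}).

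First I would verify that both sides of (\ref{twoMeasures}) are rotationally invariant probability measures. The right-hand side has total mass one because continuity of $\alpha_0$ yields $\mu(A_t)=1-t$, and the map $T_t$ commutes with rotations; the left-hand side is rotationally invariant by construction. Hence the claim is equivalent to equality of radial distribution functions on $[0,\infty)$ or, equivalently, of radial quantile functions on $(0,1)$.

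Next, for the right-hand side $\nu:=\frac{1}{1-t}(T_t)_\#(\mu|_{A_t})$, I would compute the radial CDF directly. Setting $s=|w|$, we have $|T_t(w)|=\phi(s)$ with $\phi(s):=s\sqrt{(\alpha_0(s)-t)/\alpha_0(s)}$. A short check shows that $\phi$ is strictly increasing and continuous on the open set $\{s:\alpha_0(s)>t\}$, since positivity of $\alpha_0(s)-t$ forces strict monotonicity even across flat regions of $\alpha_0$. Using continuity of $\alpha_0$, one obtains
\[
\nu(D_r)=\frac{\alpha_0(\phi^{-1}(r))-t}{1-t},
\]
whose inverse, with $r_0$ denoting the radial quantile function of $\mu$, is
\[
F_\nu^{-1}(y)=\phi\bigl(r_0(y(1-t)+t)\bigr)=\sqrt{1-t}\,r_0(y(1-t)+t)\sqrt{\frac{y}{y(1-t)+t}}.
\]

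The last display is precisely formula (\ref{htOver2}) evaluated at $\alpha=y$, after identifying $r_0=e^{-g_0'}$, and that formula was recognized in the proof of Theorem \ref{ConvFlow.thm} as the quantile function of $\mu^{\hat{\oplus}\frac{1}{1-t}}$ via \cite[Eq.~(4.7)]{COR}, the definition the paper adopts for general radial probability measures. Equality of quantile functions then yields (\ref{twoMeasures}). The chief obstacle I anticipate is not conceptual but book-keeping: carefully tracking the rescaling by $1-t$ that converts $(\cdot)^{\oplus k}$ into $(\cdot)^{\hat{\oplus}k}$, so that the normalizations in (\ref{htOver2}), in the definition of $T_t$ used here, and in \cite[Eq.~(4.7)]{COR} all line up once the conventions are unwound; handling the pseudo-inverse $r_0$ when $\alpha_0$ has plateaus is a secondary technical point, but is absorbed by the fact that $\nu$ only sees $\phi$ on $\operatorname{supp}(\mu)\cap A_t$.
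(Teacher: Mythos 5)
Your proof is correct and follows essentially the same route as the paper's own ``direct check'': reduce the identity to equality of radial quantile functions, compute the quantile of $(T_t)_\#(\mu|_{A_t})/(1-t)$ via the monotonicity of the radial part of $T_t$, and match the resulting expression against \cite[Eq.~(4.7)]{COR} exactly as was already done to obtain (\ref{htOver2}) in the proof of Theorem \ref{ConvFlow.thm}. The only difference is cosmetic---you phrase the computation through the radial CDF and its pseudo-inverse, while the paper argues by pushing annuli $E_r^t$ forward to disks---but the underlying identity and the appeal to the COR quantile formula are identical.
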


The transport map $T_{t}$ comes from the relation (\ref{htOver2}), after
identifying $\alpha_{0}(\left\vert w\right\vert )$ in the definition of
$T_{t}$ with $\alpha(1-t)+t$ in (\ref{htOver2}). We may consider the example
of the uniform probability measure on the unit disk. In that case, $\alpha
_{0}(r)=r^{2}$, so $A_{t}$ is the complement of the disk of radius $\sqrt{t}$
and
\[
T_{t}(re^{i\theta})=e^{i\theta}\sqrt{r^{2}-t}.
\]
The theorem can be verified directly in this case by noting that a uniform
measure on a disk has the property that $r^{2}$ (the square of the magnitude
of a point) is uniformly distributed. The map $T_{t}$ changes the square of
the magnitude by $r^{2}\mapsto r^{2}-t,$ taking a uniform measure on $[0,1]$
to a uniform measure on $[0,1-t].$

As in Remark \ref{pushAssumptions.remark}, Theorem \ref{ConvPush.thm} cannot
hold as stated without the assumption that $\alpha_{0}$ is continuous.

\begin{proof}
If $\mu$ is the limiting root distribution of a sequence of polynomials as in
Theorem \ref{KZ.thm}, then the result follows from Theorem \ref{ConvFlow.thm},
together with the $a=-1,$ $b=1$ case of Corollary \ref{push.cor}, with $t$
replaced by $t/2.$ But we can also check the result directly, as follows.

Let $r_{t}$ be the largest radius such that $\alpha_{0}(r_{t})=t$, so that
$A_{t}$ is the set of $w$ with $\left\vert w\right\vert >r_{t}.$ Then we note
that the function%
\[
\mathcal{T}_{t}(r):=\left\vert T_{t}(re^{i\theta})\right\vert =r\sqrt
{\frac{\alpha_{0}(r)-t}{\alpha_{0}(r)}}%
\]
is continuous and strictly increasing on $(r_{t},\infty)$ and tends to
infinity as $r\rightarrow\infty.$ (The function $r\mapsto r$ is strictly
increasing and tending to infinity and the function $r\mapsto\sqrt{(\alpha
_{0}(r)-t)/\alpha_{0}(r)}$ is positive and nondecreasing.)

We let $\gamma_{t}$ be the pushed-forward measure on the right-hand side of
(\ref{twoMeasures}). Pick $\beta$ with $t<\beta\leq1$ and let $r$ be the
smallest radius for which $\alpha_{0}(r)=\beta.$ Then look at the closed
annulus $E_{r}^{t}$ with inner radius $r_{t}$ and outer radius $r.$ The
measure $\mu_{0}$ assigns this annulus mass $\beta-t.$ Now, the map $T_{t}$
sends $E_{r}^{t}$ injectively to a \textit{disk} $D(\mathcal{T}_{t}(r))$ of
radius $\mathcal{T}_{t}(r)$ and the preimage of this disk is again $E_{r}%
^{t}.$ Thus, using (\ref{twoMeasures}), we see that
\[
\gamma_{t}[(D(\mathcal{T}_{t}(r))]=\alpha:=\frac{\beta-t}{1-t}.
\]
Furthermore, since $r$ is the smallest radius with $\alpha_{0}(r)=\beta,$ we
see that $\mathcal{T}_{t}(r)$ is the smallest radius such that $\gamma
_{t}[D(\mathcal{T}_{t}(r))]$ equals $\alpha.$ That is, the quantile function
(as in (\ref{quantileDef})) of $\gamma_{t}$ at $\alpha$ equals $r.$

Meanwhile, Eq. (4.7) of \cite{COR} tells us the quantile function of
$\mu^{\oplus\frac{1}{1-t}}$ in terms of the quantile function of $\mu.$ We
take $\lambda=1-t$ and $x=\alpha$ in Eq. (4.7) and multiply by a factor of
$\sqrt{1-t}$ to account for our scaling of the fractional free convolution.
After simplifying, we find that the smallest radius for which
$\mu^{\hat{\oplus}\frac{1}{1-t}}$ has measure $\alpha$ is%
\[
r\sqrt{\frac{\alpha_{0}(r)-t}{\alpha_{0}(r)}}=\mathcal{T}%
_{t}(r),
\]
where $r$ is computed in terms of the radial quantile function of $\mu_{0}$ as
the smallest radius for which
\[
\alpha_{0}(r)=\alpha(1-t)+t=\beta,
\]
that is, the same radius $r$ as in the previous paragraph. Thus, the quantile
function of the pushed-forward measure $\gamma_{t}$ agrees with the quantile
function of $\mu^{\oplus\frac{1}{1-t}},$ for every $\alpha=(\beta-t)/(1-t)$
between 0 and 1.
\end{proof}

\section{Free probability interpretation: multiplicative convolution\label{freeProbInterpret.sec}}

In this section, we give a free probability interpretation of the evolution of
zeros of random polynomials under the differential flows in Definition
\ref{FlowGeneralAB.def}, in terms of multiplication of an $R$-diagonal
operator by a freely independent \textquotedblleft transport
operator.\textquotedblright\ In Section \ref{connectTransportCOR.sec}, we show
how this approach relates to the free probability interpretation of Campbell,
O'Rourke, and Renfrew \cite{COR} in the case of repeated differentiation. (See
also Section \ref{fracConvolve.sec}.) We refer again to the monographs of Nica and Speicher \cite{NicaSpeicherLecture} and 
Mingo and Speicher \cite{MingoSpeicher} for basic information about free probability and $R$-diagonal 
operators.

\subsection{The transport operator}

In this subsection, we assume that the limiting root distribution $\mu_{0}$ of
$P_{0}^{N}$ can be expressed as the Brown measure of an $R$-diagonal operator
$A.$ Then, under some assumptions on $a$ and $b,$ we show that the limiting
root distribution $\sigma_{t}$ of $Q_{t}^{N}$ can be expressed as the Brown
measure of $AR_{t}^{a,b},$ for a certain $R$-diagonal \textquotedblleft
transport operator\textquotedblright\ assumed to be freely independent of $A.$
In the next subsection, we will reformulate this result in terms of a type of
free multiplicative convolution for radial measures, at which point we can
drop the restrictions on $a$ and $b$ and the requirement that $\mu_{0}$ be the
Brown measure of an $R$-diagonal operator.

The main idea behind our results is the fact that the exponential profile
$g_{t}$ of $Q_{t}^{N}$ in Theorem \ref{QNprofile.thm} is the sum of the
exponential profile $g_{0}$ of $P_{0}^{N}$ and another, explicit, function.

\begin{theorem}
\label{generalQ.thm}Assume that $b>0$, that $a\geq-b,$ and, if $a<b$, that, $t<t_{\max}=1/(b-a)$. Assume
that the limiting root distribution $\mu_{0}$ of $P_{0}^{N}$ is the Brown
measure of an $R$-diagonal element $A.$ Then we have the following results.

\begin{enumerate}
\item After enlarging the von Neumann algebra as necessary, we can find an
$R$-diagonal \textbf{transport operator} $R_{t}^{a,b}$ that is freely
independent of $A$ such that the limiting root distribution $\sigma_{t}$ of
$Q_{t}^{N}$ is the Brown measure of $AR_{t}^{a,b}$:%
\[
\sigma_{t}=\mathrm{Brown}(AR_{t}^{a,b}).
\]

\item The $R$-diagonal element $R_{t}^{a,b}$ has the property that its Brown
measure is the measure $\sigma_{t}$ in the case that $P_{0}^{N}$ is a Kac
polynomial (with exponential profile $g\equiv 0$). This property determines $R_{t}^{a,b}$ uniquely up to $\ast$-distribution.

\item The element $R_{t}^{a,b}$ may be characterized by the radial quantile function (as in \eqref{quantileDef}) of 
its Brown measure, namely
\begin{equation}\label{rtabQuant}
r_{t}^{a,b}(\alpha)=
\begin{cases}
\left(\frac{\alpha+t(a-b)}{\alpha} \right)^{\frac{b}{b-a}} & a\neq b\\
\exp\left(-\frac{bt}{\alpha} \right) & a=b
\end{cases}.
\end{equation}
Here we set $r_{t}^{a,b}(\alpha)=0$ for $\alpha<t(b-a)$ in the case $a<b.$

\item If $2b/(b-a)$ is a positive integer---in which case $a$ must be less
than $b$---then $R_{t}^{a,b}$ may be computed as%
\begin{equation}
R_{t}^{a,b}=(up)^{\frac{2b}{b-a}},\label{upPower}%
\end{equation}
where $u$ is a Haar unitary, $p$ is a self-adjoint projection freely
independent of $u,$ and
\[
\mathrm{tr}(p)=1-t(b-a),
\]
where $\mathrm{tr}$ is the trace on the relevant von Neumann algebra. In
particular, in the case of repeated differentiation ($a=0$ and $b=1$), we have%
\[
R_{t}^{0,1}=(up)^{2},
\]
where $\mathrm{tr}(p)=1-t.$
\end{enumerate}
\end{theorem}

In the case $a<b$ (with $b>0$), the condition $a\geq-b$ is equivalent to
$2b/(b-a)\geq1.$ In this case, the condition $2b/(b-a)\geq1$ will
guarantee that the convolution power in \eqref{nu.Thetat} below is at least 1. 

\begin{remark}
The quantile function $r_t^{a,b}$ in \eqref{rtabQuant} is closely connected to the transport operator $T_t$ in Definition \ref{Tt.def}. Specifically, comparing \eqref{Tt1} to \eqref{rtabQuant}, we find that 
\[
T_t(w)=w\, r_t^{a,b}(\alpha_0(\left\vert w\right\vert))
\]
\end{remark}

In our next result, we compute the law (or spectral measure) $\mu_{\vert R_{t}^{a,b}\vert
^{2}}$ of $\vert R_{t}^{a,b}\vert^{2}$ as explicitly as possible.

\begin{theorem}
\label{transportOps.thm}We describe $\vert R_{t}^{a,b}\vert ^{2}$
in three cases:

\begin{enumerate}
\item The \textbf{degree-decreasing case} $a<b,$ with $b>0$ and $a\geq-b.$
Consider the family of measures parametrized by $\gamma\in\lbrack0,1]$
\begin{equation}
\nu_{\gamma}=\gamma\delta_{0}+(1-\gamma)\delta_{1}\label{nuGamma}%
\end{equation}
which is the spectral distribution of an orthogonal projection with trace
$1-\gamma$. Then
\begin{equation}
\mu_{|R_{t}^{a,b}|^{2}}=\left(  \nu_{t(b-a)}\right)  ^{\boxtimes\frac{2b}%
{b-a}}.\label{nu.Thetat}%
\end{equation}

\item The \textbf{degree-increasing case} $a>b>0.$ There is a family
$\xi_{\gamma},$ $\gamma>0,$ of $\boxtimes$-infinitely divisible probability
measures on $[0,1]$ such that
\[
\mu_{|R_{t}^{a,b}|^{2}}=\left(  \xi_{t(a-b)}\right)  ^{\boxtimes\frac{2b}%
{a-b}}.
\]
The measures $\xi_{\gamma}$ are described explicitly in (\ref{dXi}) in Section
\ref{transportProofs.sec}. 

\item The \textbf{degree-preserving case} $a=b>0.$ Then there is one-parameter
$\boxtimes$-semigroup $\eta_{\gamma}$ of $\boxtimes$-infinitely divisible
measures such that
\[
\mu_{|R_{t}^{b,b}|^{2}}=\eta_{tb}.
\]

\end{enumerate}
\end{theorem}

\begin{remark}
In the case $a=b,$ with $b>0,$ the $R$-diagonal operator $R_{t}^{b,b}$ can be
computed as the limit as $a$ approaches $b$ from below of the operator in
(\ref{upPower}), with $a$ chosen so that $2b/(b-a)$ is an integer. In this
limit, the trace of $p$ approaches $1$ while the power $2b/(b-a)$ approaches
$+\infty.$ It is therefore natural to think of $R_{t}^{b,b}$ as an
$R$-diagonal form of a free multiplicative Poisson process. Indeed, the law of
$\vert R_{t}^{b,b}\vert ^{2}$ can be computed as the limit of the
measures in (\ref{nu.Thetat}) as $a\rightarrow b^{-},$ so that $\vert
R_{t}^{b,b}\vert ^{2}$ can be thought of as a free multiplicative
Poisson process consisting of non-negative self-adjoint operators. 
\end{remark}

\begin{remark}
The counterparts of Theorem \ref{transportOps.thm} and Theorem \ref{generalQ.thm} for polynomials with non-negative real roots have been established in \cite[\S 4]{JKM1}, when $a$ and $b$ are non-negative integers. More precisely, take a sequence of polynomials with non-negative real roots with limiting root distribution $\nu$ on $[0,\infty)$. Then, applying $z^{-\lceil N(t(a-b))\rceil} \big(z^a(\frac{d}{dz})^b\big)^{tN}$ to that sequence (analogous to our $Q_t^N$) yields a limiting root distribution $\nu\boxtimes \nu_{a,b;t}$, computed using the ordinary free multiplicative convolution $\boxtimes$ of measures on $[0,\infty)$. (The limiting measure is real counterpart of the measure $\sigma_t$ from Theorem \ref{generalQ.thm}.) The measures $\nu_{a,b;t}$ are directly related to the transport operators in our Theorem \ref{transportOps.thm} by the relation $$\nu_{a,b;t}^{\boxtimes 2} =\mu_{|R_t^{a,b}|^2}.$$.
\end{remark}

The proofs of Theorems \ref{generalQ.thm} and \ref{transportOps.thm} will be
provided in Section \ref{transportProofs.sec}.

\subsection{Free multiplicative convolution for radial measures}

Suppose $\mu$ is a compactly supported radial probability measure on
$\mathbb{C}.$ Define the radial cumulative distribution function $\alpha_{\mu}$ of
$\mu$ as
\[
\alpha_{\mu}(r)=\mu(\{\left\vert z\right\vert \leq r\}), \quad r\ge 0.
\]
Then define the radial quantile function $r_{\mu}$ of $\mu$ as%
\[
r_{\mu}(\alpha)=\inf\{r\ge 0|\alpha_{\mu}(r)\geq\alpha\},\quad\alpha\in\lbrack0,1].
\]
If $\alpha_{\mu}$ is continuous and strictly increasing on some interval
\thinspace$\lbrack r_{\textrm{in}},r_{\textrm{out}}],$ with $\alpha_\mu(r_{\textrm{in}})=0$ and $\alpha_{\mu}(r_{\textrm{out}})=1,$ then $r_{\mu}:[0,1]\rightarrow[r_{\mathrm{in}},r_{\mathrm{out}}]$ is the inverse
function to $\alpha_{\mu}.$

In general, $r_{\mu}$ is nondecreasing and left-continuous on $[0,1],$ with
$r_{\mu}(0)=0$. Any function $r(\cdot)$ with these properties is the quantile
function of a unique compactly supported radial probability measure $\mu
$---namely the measure whose radial part is the push-forward of the uniform
measure on $[0,1]$ under $r(\cdot)$. Thus, if $r_{\mu_{1}}$ and $r_{\mu_{2}}$ are the
quantile functions of compactly supported radial probability measures $\mu
_{1}$ and $\mu_{2},$ the product function $r_{\mu_{1}}r_{\mu_{2}}$ is the
quantile function of a unique compactly supported radial probability measure.

Suppose $P_{0}^{N}$ is a family of random polynomials with independent
coefficients with exponential profile $g,$ satisfying Assumptions
\ref{gProperties.assumption} and \ref{KZcoeff.assumption}. Let $\mu$ be the
limiting root distribution of $P_{0}^{N}.$ Then the quantile function $r_{\mu
}$ of $\mu$ is computed as%
\[
r_{\mu}(\alpha)=e^{-g^{\prime}(\alpha)},
\]
where $g^{\prime}(\alpha)$ is taken to be $+\infty$ for $\alpha<\alpha_{\min
}.$ (See Theorem \ref{KZ.thm}.) In this setting, multiplying the quantile
functions is equivalent to adding the exponential profiles.

Our next result says that multiplying freely independent $R$-diagonal operators corresponds to 
multiplying the radial quantile functions of their Brown measures. 

\begin{proposition}
Suppose $A_{1}$ and $A_{2}$ are freely independent $R$-diagonal operators and
let $r_{1}$ and $r_{2}$ be the radial quantile functions of the Brown measures
of $A_{1}$ and $A_{2},$ respectively. Then $A_{1}A_{2}$ is $R$-diagonal and
the quantile function of its Brown measure is $r_{1}r_{2}.$
\end{proposition}

\begin{proof}
Consider first a single $R$-diagonal element $A$ with Brown measure $\mu_{A}.$
We write $A=uh$ with $u$ a Haar unitary, $h\geq0,$ and $u$ and $h$ freely
independent. We let $\delta$ be the mass of the law of $h$ at 0. Then by
\cite[Theorem 4.4(iii)]{HL} the Brown measure of $A$ also has mass $\delta$ at
the origin, so that $r$ is zero on $[0,\delta]$ and strictly positive on
$(\delta,1].$ Then by the same theorem, the quantile function of $\mu_{A}$ is
computed as
\begin{equation}
r_{\mu_{A}}(\alpha)=\frac{1}{\sqrt{S_{A^{\ast}A}(\alpha-1)}},\quad
\alpha>\delta. \label{HLStransform}%
\end{equation}
where $S_{A^{\ast}A}$ is the $S$-transform of $A^{\ast}A=h^{2}.$ (See Section \ref{transportProofs.sec} for the definition of $S$.)

Now consider $A_{1}$ and $A_{2},$ written as $u_{1}h_{1}$ and $u_{2}h_{2},$
with $\mu_{h_{1}}$ and $\mu_{h_{2}}$ having mass $\delta_{1}$ and $\delta_{2}$
at the origin. According to \cite[Proposition 3.6(ii)]{HL}, the law of
$\left\vert A_{1}A_{2}\right\vert ^{2}$ is the free multiplicative convolution
of the law of $\left\vert A_{1}\right\vert ^{2}$ with the law of $\left\vert
A_{2}\right\vert ^{2}.$ Then by \cite[Lemma 6.9]{BVIU}, the law of $\left\vert
A_{1}A_{2}\right\vert ^{2}$---and therefore also the Brown measure of
$A_{1}A_{2}$---has mass exactly $\max(\delta_{1},\delta_{2})$ at the origin.
Thus, $r_{\mu_{A_{1}A_{2}}}=r_{\mu_{A_{1}}}r_{\mu_{A_{2}}}=0$ on
$[0,\max(\delta_{1},\delta_{2})].$ Once this is established, we use
(\ref{HLStransform}) and the multiplicativity of the $S$-transforms
(\cite[Theorem 2.6]{VoiculescuMult} or \cite[Corollary 6.6]%
{BVIU}) to conclude that $r_{\mu_{A_{1}A_{2}}}=r_{\mu_{A_{1}}%
}r_{\mu_{A_{2}}}$ on $(\max(\delta_{1},\delta_{2}),1].$
\end{proof}

In light of the proposition, it is natural to define a free multiplicative
convolution $\otimes$ (the circled symbol indicating isotropy, not to be confused with the product measure) on compactly supported
radial probability measures as follows.

\begin{definition}
\label{radialConv.def}If $\mu_{1}$ and $\mu_{2}$ are compactly supported
radial probability measures with radial quantile functions $r_{1}$ and
$r_{2},$ we define
\[
\mu_{1}\otimes\mu_{2}%
\]
as the compactly supported radial probability measure with quantile function
$r_{1}r_{2}.$
\end{definition}

\begin{example}
Fix a positive integer $N$. Suppose $\mu_{1}$ assigns mass $1/N$ to circles of
radii $s_{1},\ldots,s_{N}$ with $s_{1}<\cdots<s_{N}$ and $\mu_{2}$ assigns
mass $1/N$ to circles of radii $t_{1},\ldots,t_{N}$ with $t_{1}<\cdots<t_{N}.$
Then $\mu_{1}\otimes\mu_{2}$ assigns mass $1/N$ to each circle of radius
$s_{i}t_{i}$ with $1\leq i\leq N.$
\end{example}

\begin{proof}
In this case, $r_{1}$ takes the value $s_{i}$ on the interval $((i-1)/N,i/N]$
and $r_{2}$ takes the value $t_{i}$ on $((i-1)/N,i/N].$ Thus, $r_{1}r_{2}$
takes the value $s_{i}t_{i}$ on $((i-1)/N,i/N],$ which corresponds to the
claimed value of $\mu_{1}\otimes\mu_{2}.$
\end{proof}

We now reformulate Theorem \ref{generalQ.thm} in the language of the radial
free multiplicative convolution.

\begin{theorem}
\label{ConvolutionTransport.thm}Assume $b>0$ and $t<t_{\max}$ and let $\mu
_{0}$ be the limiting root distribution of $P_{0}^{N}.$ Let $\rho_{t}^{a,b}$
be the radial measure whose quantile function $r_{t}^{a,b}$ is given by \eqref{rtabQuant}. 
Then the limiting root distribution $\sigma_{t}$ of $Q_{t}^{N}$ is given by
\[
\sigma_{t}=\mu_{0}\otimes\rho_{t}^{a,b}
\]
where $\otimes$ is as in Definition \ref{radialConv.def}.

\end{theorem}

In the cases where Theorem \ref{generalQ.thm} is applicable, the measure
$\rho_{t}^{a,b}$ is simply the Brown measure of the transport operator
$R_{t}^{a,b}.$ But Theorem \ref{ConvolutionTransport.thm} eliminates the
assumptions that $a\geq-b$ and that $\mu_{0}$ is the Brown measure of an
$R$-diagonal element in Theorem \ref{generalQ.thm}.

The above findings also give rise to a relation between the free radial additive convolution $\oplus$ and its multiplicative counterpart $\otimes$ in Definition \ref{radialConv.def}.
\begin{corollary}
Let $\mu_0$ be a radial distribution obtained from an exponential profile satisfying Assumption \ref{gProperties.assumption}. Let $\hat\rho_t$ be the Brown measure of $\frac{1}{1-t}up$, where as above $u$ is Haar unitary and $p$ is a free projection of trace $1-t$. For any $t\in [0,1)$ it holds
\begin{align}\label{eq:relation}
\mu_0\otimes\hat \rho_t=t\delta_0+(1-t)\mu^{\oplus \frac{1}{1-t}}.
\end{align}
\end{corollary}

The measure $\hat\rho_t$ is the radial, or $R$-diagonalized, version of the distribution $t\delta_0+(1-t)\delta_{\frac{1}{1-t}}$ of the positive element $\tfrac1 {1-t} p$, or $\mathcal H(t\delta_0+(1-t)\delta_{\frac{1}{1-t}})=\hat \rho_t$ in notation of K\"{o}sters and Tikhomirov \cite{KostersTikhomirov}. Having this in mind, Equation \eqref{eq:relation} is the isotropic analogue of the Equation (14.13) in \cite{NicaSpeicherLecture} stating the following:
For any (compactly supported) distribution $\mu$ on $\mathbb R$ and any $t\in[0,1)$ it holds
$$\mu\boxtimes\big(t\delta_0+(1-t)\delta_{\frac{1}{1-t}}\big)=t\delta_0+(1-t)\mu^{\boxplus \frac{1}{1-t}}.$$

\begin{proof}
We consider the case $a=-1, b=1$ of Theorem \ref{ConvFlow.thm}, which says that 
$$\mu_t=\mu_0^{\hat\oplus \frac{1}{1-t}}=\mu_0^{\oplus \frac{1}{1-t}}\big(\tfrac{1}{1-t}\cdot \big).$$ Thus, the associated measure $\sigma_t=(1-t)\mu_t+t\delta_0$ is computed as $$\sigma_t=t\delta_0+(1-t)\mu_0^{\oplus\frac{1}{1-t}}\big( \tfrac{1}{1-t}\cdot\big) .$$
On the other hand, Theorem \ref{ConvolutionTransport.thm} implies $\sigma_t=\mu_0\otimes\rho_t^{-1,1}$, where $\rho_t$ is the Brown measure of $R_t^{-1,1}=up$ for $u$ Haar unitary and a free projection $p$ of trace $1-t$. The claim follows from the observation that the push-forward of a multiplication by $1-t$ may only act on one factor in $\otimes$, here $\rho_t^{-1,1}$.
\end{proof}

\subsection{Proofs of the main results\label{transportProofs.sec}}

If $\mu$ is a compactly supported probability measure on $[0,\infty),$ the
$\psi$-transform of $\mu$ is the function given by
\[
\psi(z)=\int_{0}^{\infty}\frac{zt}{1-zt}~d\mu(t)
\]
for $z$ outside the support of $\mu.$ The $\psi$-transform of $\mu$ is related
to its Cauchy transform $m$ as $\psi(z)=m(1/z)/z-1.$ Then the $S$-transform of
$\mu$ is the function satisfying%
\begin{equation}
\psi\left(  \frac{z}{z+1}S(z)\right)  =z,\label{PsiAndS}%
\end{equation}
for $z$ in a domain in $\mathbb{C}$ that contains an interval of the form
$(-\varepsilon,0),$ $\varepsilon>0.$ If $x$ is a non-negative self-adjoint
element, we let $S_{x}$ denote the $S$-transform of the law of $x.$

We first prove Theorem \ref{ConvolutionTransport.thm}.

\begin{proof}
[Proof of Theorem \ref{ConvolutionTransport.thm}]We use that the quantile function of the limiting
root distribution of a random polynomial with exponential profile $g$
(satisfying Assumptions \ref{gProperties.assumption} and
\ref{KZcoeff.assumption}) is given as
\[
r(\alpha)=e^{-g^{\prime}(\alpha)},
\]
where we interpret $g^{\prime}(\alpha)$ as equaling $+\infty$ when $\alpha$ is
less than the constant $\alpha_{\min}$ in Assumption
\ref{gProperties.assumption}. (Recall Theorem \ref{KZ.thm}.)

We then note that, by Theorem \ref{QNprofile.thm}, the exponential profile
$g_{t}$ of $Q_{t}^{N}$ is the sum of the exponential profile $g_{0}$ of
$P_{0}^{N},$ and another, explicit term $G_{t}^{a,b},$ computed as%
\[
G_{t}^{a,b}(\alpha)=\left\{
\begin{array}
[c]{ccc}%
\frac{b}{a-b}\{[\alpha+t(a-b)]\log[\alpha+t(a-b)]-\alpha\log\alpha\}-bt\  &  &
a\neq b\\
bt\log\alpha &  & a=b
\end{array}
\right.  .
\]
Thus, the quantile function of $\sigma_{t}$ will be the product of the
quantile function $r_{0}$ of $\mu_{0}$ and the function $r_{t}^{a,b}%
(\alpha):=e^{-(G_{t}^{a,b})^{\prime}(\alpha)},$ where a computation shows that
$e^{-(G_{t}^{a,b})^{\prime}(\alpha)}$ coincides with the function $r_t^{a,b}$ in \eqref{rtabQuant}.
Thus, by Definition \ref{radialConv.def}, $\sigma_{t}$ is the radial convolution
of $\mu_{0}$ with the measure $\rho_{t}^{a,b}$ having quantile function
$r_{t}^{a,b},$ as claimed.
\end{proof}

We now turn to the proofs of Theorems \ref{generalQ.thm} and
\ref{transportOps.thm}.

\begin{proof}
[Proofs of Theorems \ref{generalQ.thm} and \ref{transportOps.thm}]If the
measure $\rho_{t}^{a,b}$ with quantile function $r_{t}^{a,b}$ is the Brown
measure of an $R$-diagonal operator $R_{t}^{a,b},$ the $S$-transform of
$\vert R_{t}^{a,b}\vert ^{2}$ can be computed using
(\ref{HLStransform}), giving the formula 
\begin{equation}
S_{\vert R_{t}^{a,b}\vert ^{2}}(z)=%
\begin{cases}
\left(  \frac{z+1}{z+1+t(a-b)}\right)  ^{2b/(b-a)}\quad & a\neq b\\[8pt]%
\exp\left(  \frac{2bt}{z+1}\right)  & a=b.
\end{cases}
 \label{Sformula}%
\end{equation}
The $S$-transform in the $a=b$ case of (\ref{Sformula}) is equivalent to the
$\Sigma$-transform in \cite[Lemma 6.12(ii)]{BVIU}, which corresponds to the
mass at $+\infty$ in the L\'{e}vy--Hin\v{c}in decomposition in \cite[Theorem
6.13(iii)]{BVIU}. 

Our next step is to determine when the functions in
(\ref{Sformula}) are actually $S$-transforms of probability measures on
$[0,\infty).$ We divide our analysis into the three cases in Theorem
\ref{transportOps.thm}.

We start with the \textbf{degree-decreasing case} $a<b,$ with $b>0$ and
$a\geq-b.$ It is easy to verify that for every $\gamma\in\lbrack0,1)$, the
function
\[
s(z)=\frac{z+1}{z+1-\gamma},\quad z\in\mathbb{C}\setminus(-\infty,\gamma-1]
\]
is the $S$-transform of the measure $\nu_{\gamma}$, which is the law of a
projection with trace $1-\gamma$. Then by \cite[Theorem 2.6]{BB}, the function%
\[
S(z)=\left(  \frac{z+1}{z+1-\gamma}\right)  ^{\delta}%
\]
is the $S$-transform of $(\nu_{\gamma})^{\boxtimes\delta}$ for all $\delta
\geq1.$ We apply this result with $\gamma=t(b-a)$ and $\delta=2b/(b-a),$ where
under the stated assumptions on $a$ and $b$---and assuming $t<t_{\max
}=1/(b-a)$---we have $\gamma\in(0,1)$ and $\delta\geq1.$ 

We now turn to the \textbf{degree-increasing case} $a>b,$ with $b>0.$ We then
claim that for all $\gamma>0,$ the function
\begin{equation}
s(z)=\frac{z+1+\gamma}{z+1}=1+\frac{\gamma}{z+1}\label{Sgamma}%
\end{equation}
is the $S$-transform of a $\boxtimes$-infinitely divisible probability measure
$\xi_{\gamma}$ on $[0,1].$ This claim follows from \cite[Theorem 6.13]{BVIU},
after correcting a minor typographical error there. (The domain $\mathbb{C}%
\setminus(0,1)$ should be $(\mathbb{C}\setminus\mathbb{R})\cup(-1,0),$ which
is a weakening of the corresponding condition in \cite[Theorem 7.5(ii)]%
{BVPJM}.) 

We then claim that the $S$-transform in (\ref{Sgamma}) has the
following L\'{e}vy--Hin\v{c}in representation (as in \cite[Theorem
6.13(iii)]{BVIU}):
\[
\log\left(  s\left(  \frac{z}{1-z}\right)  \right)  =\int_{\frac{1+\gamma
}{\gamma}}^{\infty}\frac{1+\lambda z}{z-\lambda}\frac{c_{\gamma}}%
{1+\lambda^{2}}~d\lambda,
\]
where%
\[
c_{\gamma}=\frac{1}{2}\log(1+2\gamma+2\gamma^{2}),
\]
as may be verified by direct calculation. Then by \cite[Theorem 2.6]{BB}, the
$S$-transform in the first line of (\ref{Sformula}), in the case $a>b,$ is the
$S$-transform of $\left(  \xi_{t(a-b)}\right)  ^{\boxtimes\frac{2b}{a-b}},$ as
claimed. Note that since $\xi_{t(a-b)}$ is infinitely divisible, the exponent
$2b/(a-b)$ is allowed to be less than 1, provided it is positive. 

We now compute the measure $\xi_{\gamma}$ explicitly. Using the formula for
(\ref{Sgamma}) for $s(z),$ we can solve for $\psi$ in (\ref{PsiAndS}) and then
compute $m$ as
\[
m(z)=\frac{1-\gamma+\sqrt{(1+\gamma)^{2}-\frac{4\gamma}{z}}}{2(z-1)},
\]
where we use the principal branch of the square root for $z\in\mathbb{C}$
outside the interval $[0,4\gamma/(1+\gamma)^{2}]$ in $\mathbb{R}.$ It is
straightforward to check that $m$ is the Cauchy transform of a probability
measure $\xi_{\gamma}$ on $\mathbb{R},$ computed explicitly by the Stieltjes
inversion formula as%
\begin{equation}
d\xi_{\gamma}(x)=\max(0,1-\gamma)\delta_{1}+\mathbf{1}_{(0,4\gamma
/(1+\gamma)^{2})}\frac{1}{2\pi}\frac{\sqrt{4\gamma-(1+\gamma)^{2}x}%
}{(1-x)\sqrt{x}}~dx.\label{dXi}%
\end{equation}

Finally, we consider the \textbf{degree-preserving case} $a=b,$ with $b>0.$ In
that case, we may again apply \cite[Theorem 6.13]{BVIU} to show that%
\[
S(z)=\exp\left(  \frac{\gamma}{z+1}\right)
\]
is the $S$-transform of a $\boxtimes$-infinitely divisible measure
$\eta_{\gamma}.$ This $S$-transform is the $t\rightarrow1$ limit of the
$S$-transform in \cite[Lemma 7.2]{BVPJM}, which is the $S$-transform of a
\textquotedblleft free multiplicative Poisson\textquotedblright\ random
variable. The $t\rightarrow1$ limit causes the support of the measure
$\eta_{\gamma}$ to extend all the way to 0. 
\end{proof}

\subsection{Connection to the results of Campbell, O'Rourke, and
Renfrew\label{connectTransportCOR.sec}}

Our results in this section are not directly comparable to those in the paper
\cite{COR} of Campbell, O'Rourke, and Renfrew, because (as discussed in
Section \ref{fracConvolve.sec}), \cite{COR} assumes that the limiting root
distribution of $P_{0}^{N}(z^{2})$ is the Brown measure of an $R$-diagonal
operator, while we assume that the limiting root distribution of $P_{0}%
^{N}(z)$ is the Brown measure of an $R$-diagonal operator. If we adjust for
this difference of convention and focus on the case of repeated
differentiation ($a=0,$ $b=1$), Theorem \ref{generalQ.thm} may be restated as
follows. 

\begin{theorem}
[Equivalent form of Theorem \ref{generalQ.thm} in the case $a=0,$ $b=1$]Take
$a=0$ and $b=1,$ and assume that the limiting root distribution of $P_{0}%
^{N}(z^{2})$ is the Brown measure of an $R$-diagonal operator $A.$ Then the
limiting root distribution of $Q_{t}^{N}(z^{2})$ is the Brown measure of the
operator
\[
Aup,
\]
where $u$ is a Haar unitary, $p$ is a projection with trace $1-t,$ and where
$A,$ $u,$ and $p$ are freely independent. 
\end{theorem}

More generally, if we repeat the proof of Theorem \ref{generalQ.thm} using the
polynomials $P_{0}^{N}(z^{2})$ and $Q_{t}^{N}(z^{2})$ in place of $P_{0}%
^{N}(z)$ and $Q_{0}^{N}(z),$ we find in the degree-decreasing case the same
result except that the powers on the right-hand side of (\ref{Sformula}) and
(\ref{upPower}) are $b/(b-a)$ instead of $2b/(b-a).$ We then note that since
$A$ is $R$-diagonal and $p$ is a projection, we have
\[
\mathrm{Brown}(Aup)=\mathrm{Brown}(Ap)=\mathrm{Brown}(pAp).
\]
(The second equality can be established using the argument on p. 350 of
\cite{HL}.) The limiting root distribution of $P_{t}^{N}(z^{2})$ is then
easily seen to be the same as the Brown measure of $pAp,$ viewed as an element
of the compressed von Neumann algebra $p\mathcal{A}p,$ which agrees with the
result of \cite[Theorem 4.8]{COR}, as restated in \cite[Figure 2]{COR}.

The conclusion of the preceding discussion is this: In the repeated
differentiation case and after adjusting for differences of convention, the
\textquotedblleft transport operator\textquotedblright\ approach in our
Theorem \ref{generalQ.thm} is easily seen to be equivalent to the approach in
\cite{COR} using compressions of $R$-diagonal operators (or, equivalently,
fractional free convolution). The transport operator approach lends itself to
the study of general differential flows because the transport operator
$R_{t}^{a,b}$ can be made to depend on parameters $a$ and $b$ in the flow,
whereas in the compression approach, the only available parameter is the trace
of $p.$ 

\section{The PDE analysis\label{PDE.sec}}

In this section, we obtain a PDE\ for (a rescaled version of) the log
potential of the limiting root distribution of polynomial $P_{t}^{N}.$ This
PDE clarifies the push-forward results of Section \ref{PushForward.sec}.
Specifically, we will see that Theorem \ref{push.thm} can be interpreted as a
\textquotedblleft bulk\textquotedblright\ version of the statement that the
\textit{zeros of }$P_{t}^{N}$\textit{ evolve approximately along the
characteristic curves} of the relevant PDE. (Compare the heuristic derivation
of Idea \ref{radialMotion.idea} in Section \ref{newResults.sec}.) We
emphasize, however, that the actual proof of Theorem \ref{push.thm} in Section
\ref{PushForward.sec} is independent of any PDE\ results. The results obtained
in this section are parallel to the results of \cite{HHJK1} for polynomials
evolving according to the heat flow.

If $\mu$ is a compactly supported probability measure on $\mathbb{C},$ we
normalize the log potential $V$ of $\mu$ as%
\begin{equation}
V(z)=\int_{\mathbb{C}}\log(\left\vert z-w\right\vert ^{2})~d\mu(w),\quad
z\in\mathbb{C}.\label{PotentialDef}%
\end{equation}
This definition differs by a factor of 2 from the one used in \cite[Theorem
5.2]{HHJK2}, where $\log(\left\vert z-w)\right\vert $ is used in place of
$\log(\left\vert z-w\right\vert ^{2})$ in (\ref{PotentialDef}). The measure
$\mu$ is then recovered from $V$ as%
\[
\mu=\frac{1}{4\pi}\Delta V,
\]
where $\Delta$ is the distributional Laplacian. If $P$ is a polynomial of
degree $N$ with leading coefficient $a_{N},$ the log potential \thinspace$V$
of the empirical root measure of $P_{0}^{N}$ is easily seen to be%
\begin{equation}
V(z)=\frac{1}{N}\log\left\vert P(z)\right\vert ^{2}-\frac{1}{N}\log\left\vert
a_{N}\right\vert ^{2}.\label{rootMeasure}%
\end{equation}

Now consider the polynomial $P_{t}^{N}$ in Definition \ref{FlowGeneralAB.def},
where the initial polynomial $P_{0}^{N}$ satisfies Assumption
\ref{InitialPoly.assumption}. Let $V_{t}$ denote the log potential of the
limiting root distribution $\mu_{t}$ of $P_{t}^{N}.$ Then define a rescaled
log potential $S_{t}$ of $P_{t}^{N}$ by
\begin{equation}
S_{t}(z)=(1+t(a-b))V_{t}(z)+2g_{t}(1).\label{Snormalization}%
\end{equation}
At least heuristically, $S_{t}$ should be computable as%
\begin{equation}
S_{t}(z)=\lim_{N\rightarrow\infty}\frac{1}{N}\log\left(\left\vert P_{t}%
^{N}(z)\right\vert^2\right) .\label{Sheuristic}%
\end{equation}
The expression (\ref{Sheuristic}) accounts for differences between $S_{t}$ and
$V_{t}$. First, the right-hand side of (\ref{Sheuristic}) is normalized by the
degree $N$ of the original polynomial, rather than by the degree $(1+t(a-b))N$
of the polynomial $P_{t}^{N}.$ Second, the right-hand side of
(\ref{Sheuristic}) does not subtract off a term coming from the leading
coefficient of $P_{t}^{N},$ as in (\ref{rootMeasure}). The normalization in
(\ref{Snormalization}), motivated by (\ref{Sheuristic}), is convenient because
it leads to a nice PDE\ for $S_{t}.$ (Compare the heuristic derivation in
Appendix \ref{PDEderiv.appendix} for the case of repeated differentiation,
where (\ref{Sheuristic}) is used.)

Throughout this section we make the following assumption. Recall from
\eqref{InnerOuter}, \eqref{alpha0} and \eqref{tmax} that we associate to the
exponential profile $g_{t}$ the radii of the support $r_{\mathrm{in}}(t)=\lim
_{\alpha\searrow\alpha^{t}_{\min}}e^{-g_{t}^{\prime}(\alpha)}$,
$r_{\mathrm{out}}(t)=e^{-g_{t}^{\prime}(1)}$, as well as the thresholds
$\alpha_{\min}^{t}=\max(t(b-a),0)$ and $t_{\max}=\frac{1}{b-a}$ if $a<b$,
$t_{\max}=\infty$ if $a\geq b$.

\begin{assumption}
\label{pde.assumption} We assume that $r_{\mathrm{in}}(t)=0$ for all $0\leq
t<t_{\max}$ and that $g_{t}$ is twice continuously differentiable with
$g_{t}^{\prime\prime}<0$ on the interval $(\alpha_{\min}^{t},1)$.
\end{assumption}

If $b>0,$ the assumptions will hold as long as $r_{\mathrm{in}}(0)=0$ and
$g_{0}$ is twice continuously differentiable with $g_{0}^{\prime\prime}<0$ on
$(0,1).$ (By the formulas for $g_{t}$ in Theorem \ref{QNprofile.thm}.)

\begin{theorem}
\label{thePDEforS.thm} Assume Assumptions \ref{KZcoeff.assumption},
\ref{InitialPoly.assumption}, and \ref{pde.assumption}, let $\mu_{t}$ be the
limiting root distribution of $P_{t}^{N},$ let $V_{t}$ be the log potential of
$\mu_{t},$ and then define $S_{t}$ by (\ref{Snormalization}). Then, $S_{t}$
satisfies the PDE%
\begin{equation}
\frac{\partial S_{t}}{\partial t}=\log\left(  \left\vert z^{a}\left(
\frac{\partial S_{t}}{\partial z}\right)  ^{b}\right\vert ^{2}\right)  ,\qquad
z\in\mathbb{C}\backslash\{0\}, \ 0\leq t<t_{\max} . \label{UtPDE}%
\end{equation}

\end{theorem}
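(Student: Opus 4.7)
The plan is to derive an explicit closed-form expression for $S_t$ in terms of the exponential profile $g_t$ and then verify the PDE by a direct calculation. Under Assumption \ref{pde.assumption}, the map $\alpha\mapsto e^{-g_t'(\alpha)}$ is a $C^1$-diffeomorphism of $(\alpha_{\min}^t,1]$ onto $(0,r_{\mathrm{out}}(t)]$, so for $0<|z|\leq r_{\mathrm{out}}(t)$ we may introduce the radial quantile $\alpha=\alpha(t,z)\in(\alpha_{\min}^t,1]$ via the implicit relation $|z|=e^{-g_t'(\alpha)}$; equivalently $\alpha=\tilde\alpha_t(|z|)$, the radial CDF of the limiting root measure $\sigma_t$ of $Q_t^N$. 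In these variables I aim to establish the closed form
\[
S_t(z)\;=\;2g_t(\alpha)\,-\,2(\alpha+t(a-b))\,g_t'(\alpha).
\]

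To obtain this identity, I would first note that the radial CDF $F_t(s)=\mu_t(D_s)$ of $\mu_t$ satisfies $F_t(s)=(\tilde\alpha_t(s)+t(a-b))/(1+t(a-b))$ for $s>0$, a direct consequence of the relation $P_t^N(z)=z^{\lfloor Nt(a-b)\rfloor}Q_t^N(z)$ from Definition \ref{FlowGeneralAB.def} together with Theorem \ref{KZ.thm} applied to $Q_t^N$ (whose exponential profile is $g_t$ by Theorem \ref{QNprofile.thm}). Rotational invariance of $\mu_t$ and the mean-value identity $\tfrac{1}{2\pi}\int_0^{2\pi}\log|z-se^{i\theta}|^2\,d\theta=\log\max(|z|^2,s^2)$, combined with one integration by parts, yield
\[
V_t(z)\;=\;2\log r_{\mathrm{out}}(t)\,-\,2\int_{|z|}^{r_{\mathrm{out}}(t)}\frac{F_t(s)}{s}\,ds
\]
for $0<|z|\leq r_{\mathrm{out}}(t)$. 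The substitution $s=e^{-g_t'(\alpha')}$ converts the integral into $-\int_\alpha^1 \tfrac{\alpha'+t(a-b)}{1+t(a-b)}\,g_t''(\alpha')\,d\alpha'$, and a second integration by parts, using $-g_t'(1)=\log r_{\mathrm{out}}(t)$, gives the asserted closed form for $S_t$ after multiplying by $1+t(a-b)$ and adding $2g_t(1)$.

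With the closed form in hand, the PDE reduces to matching elementary expressions. Rotational invariance and the Cauchy-transform identity give $\partial_z V_t(z)=F_t(|z|)/z$, so $\partial_z S_t(z)=(\alpha+t(a-b))/z$ and therefore
\[
\log\bigl|z^a(\partial_z S_t)^b\bigr|^2 \;=\; 2(a-b)\log|z|\,+\,2b\log(\alpha+t(a-b)).
\]
For the time derivative, write $G(t,\alpha)=2g_t(\alpha)-2(\alpha+t(a-b))g_t'(\alpha)$, so that $S_t(z)=G(t,\alpha(t,|z|))$. The central cancellation is the following: $\partial_\alpha G=-2(\alpha+t(a-b))g_t''(\alpha)$, and implicit differentiation of $|z|=e^{-g_t'(\alpha)}$ at fixed $z$ gives $\partial_t\alpha=-\partial_t g_t'(\alpha)/g_t''(\alpha)$, so the product $\partial_\alpha G\cdot\partial_t\alpha=2(\alpha+t(a-b))\,\partial_t g_t'(\alpha)$ exactly cancels the corresponding term in $\partial_t G$. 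What remains is $\partial_t S_t=2\,\partial_t g_t(\alpha)-2(a-b)\,g_t'(\alpha)$, and inserting $\partial_t g_t(\alpha)=b\log(\alpha+t(a-b))$ (valid uniformly for $a\neq b$ and $a=b$ by the formulas in Theorem \ref{QNprofile.thm}) together with $-g_t'(\alpha)=\log|z|$ matches the right-hand side computed above.

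The main technical point is to justify the change of variable, the two integrations by parts, and the implicit differentiation rigorously. Assumption \ref{pde.assumption} ($g_t\in C^2$ with $g_t''<0$ strictly on $(\alpha_{\min}^t,1)$, and $r_{\mathrm{in}}(t)=0$) gives $\tilde\alpha_t\in C^1((0,r_{\mathrm{out}}(t)])$, so all manipulations are valid on any compact subinterval of $(0,r_{\mathrm{out}}(t)]$ and the closed form for $S_t$ extends to the full domain by continuity. In the exterior region $|z|>r_{\mathrm{out}}(t)$, one has $V_t(z)=2\log|z|$ and hence $S_t(z)=2(1+t(a-b))\log|z|+2g_t(1)$; a direct calculation using $\partial_t g_t(1)=b\log(1+t(a-b))$ verifies the PDE there as well.
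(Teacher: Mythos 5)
Your proof is correct, and it takes a genuinely different route from the paper's. The paper proves the PDE by combining the variational (Legendre-type) representation of the log potential from Kabluchko--Zaporozhets (Proposition \ref{QNtLog.prop}), a pointwise verification that the supremand $f_t(\alpha,z)=2g_t(\alpha)+(\alpha+t(a-b))\log|z|^2$ satisfies the PDE for each fixed $\alpha$ (Lemma \ref{fPDE.lem}), and an envelope argument: because $\partial_\alpha f_t$ vanishes at the maximizer $\alpha_t(z)$, the chain-rule cross terms drop, so $\partial_t S_t=\partial_t f_t$ and $\partial_z S_t=\partial_z f_t$, and the PDE transfers from $f_t$ to $S_t$. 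You instead re-derive the explicit value of $S_t$ at the critical $\alpha$, namely $S_t(z)=2g_t(\alpha)-2(\alpha+t(a-b))g_t'(\alpha)$, from scratch: the radial mean-value identity for the log kernel, an integration by parts to write $V_t$ as an integral of the radial CDF $F_t$, the change of variables $s=e^{-g_t'(\alpha')}$, and a second integration by parts. You then verify the PDE by a direct implicit-differentiation calculation in which $\partial_\alpha G\cdot\partial_t\alpha$ cancels the corresponding term in $\partial_t G$; this cancellation is the arithmetic counterpart of the paper's envelope argument (in your parameterization $\partial_\alpha G$ does not vanish, so the cancellation is less transparent but equivalent). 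Your derivation of $\partial_z S_t=(\alpha+t(a-b))/z$ via the Cauchy transform of a radial measure, the handling of the exterior region $|z|>r_{\mathrm{out}}(t)$, and the observation that $\partial_t g_t(\alpha)=b\log(\alpha+t(a-b))$ is valid uniformly for $a\neq b$ and $a=b$ are all correct. The trade-off: the paper's approach is shorter and makes the Hamilton--Jacobi structure (PDE satisfied along each $\alpha$-slice plus envelope theorem) transparent, which connects directly to the characteristic-curve analysis of Section \ref{hj.sec}; your approach is more self-contained and elementary, re-deriving the KZ variational formula in the radial case from the mean-value identity, but buries the Hamilton--Jacobi structure in a cancellation.
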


We emphasize for the complex variable $z$ as usual,%
\[
\frac{\partial}{\partial z}=\frac{1}{2}\left(  \frac{\partial}{\partial
x}-i\frac{\partial}{\partial y}\right)  ;\quad\frac{\partial}{\partial\bar{z}%
}=\frac{1}{2}\left(  \frac{\partial}{\partial x}+i\frac{\partial}{\partial
y}\right)  ,
\]
whereas $t$ is a \textit{real} variable and so $\partial/\partial t$ is the
ordinary real partial derivative. This situation should be contrasted with the
one in \cite{HHJK2}, where $t$ is a complex variable and the derivatives with
respect to $t$ are Wirtinger derivatives (i.e., Cauchy--Riemann operators).
Since $S_{t}$ is real valued, we may rephrase the right hand side of the PDE
by $\log\left(  z^{a}\left(  \frac{\partial S_{t}}{\partial z}\right)
^{b}\right)  +\log\left(  \bar{z}^{a}\left(  \frac{\partial S_{t}}%
{\partial\bar{z}}\right)  ^{b}\right)  .$

The proof will be given in Section \ref{LogPotPDE.sec}.

\subsection{The log potentials of $Q_{t}^{N}$ and $P_{t}^{N}$}

We first record formulas for the log potentials of the limiting root
distributions of $Q_{t}^{N}$ and $P_{t}^{N}.$

\begin{proposition}
\label{QNtLog.prop}The log potential $W_{t}$ of the limiting root distribution
of $Q_{t}^{N}$ can be computed as%
\begin{equation}
W_{t}(z)=\sup_{\alpha}[2g_{t}(\alpha)-2g_{t}(1)+\alpha\log\left\vert
z\right\vert ^{2}], \label{Wtformula}%
\end{equation}
and the log potential $V_{t}$ of the limiting root distribution of $P_{t}^{N}$
can be computed as%
\begin{equation}
V_{t}(z)=\frac{1}{1+t(a-b)}\left[  \sup_{\alpha}(2g_{t}(\alpha)-2g_{t}%
(1)+\alpha\log\left\vert z\right\vert ^{2})+t(a-b)\log\left\vert z\right\vert
^{2}\right]  , \label{Vtformula}%
\end{equation}
where in both cases, the supremum is taken over $1\ge\alpha\geq\alpha_{\min
}^{t},$ where, as in (\ref{alpha0}), $\alpha_{\min}^{t}$ is $0$ for $a\geq b$
and $\alpha_{\min}^{t}$ is $t(b-a)$ for $a<b.$
\end{proposition}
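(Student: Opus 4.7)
The plan is to derive both formulas directly from the description of the limiting root measures supplied by Theorem~\ref{KZ.thm}. By that theorem, $\sigma_t$ is rotationally invariant with radial CDF $F_t(r)=\alpha_t(r)$ on the support annulus, where $\alpha_t$ is the inverse of the radial quantile function $r_t(\alpha)=e^{-g_t'(\alpha)}$, together with a point mass $\alpha_{\min}^t$ at the origin. Combining this with the classical identity $\int_0^{2\pi}\log|z-re^{i\phi}|^2\,\tfrac{d\phi}{2\pi}=2\max(\log|z|,\log r)$ gives, for any rotationally invariant probability measure $\mu$ with radial CDF $F$, the standard formula
\[
V_{\mu}(z)=2F(|z|)\log|z|+2\int_{|z|}^{\infty}\log(r)\,dF(r).
\]
Applying this to $\sigma_t$ and changing variables $r=r_t(\alpha)$ (so $dF_t=d\alpha$ and $\log r=-g_t'(\alpha)$) converts the integral to $\int_{\alpha_t(|z|)}^{1}(-g_t'(\alpha))\,d\alpha=g_t(\alpha_t(|z|))-g_t(1)$, yielding $W_t(z)=\alpha_t(|z|)\log|z|^2+2g_t(\alpha_t(|z|))-2g_t(1)$ whenever $|z|$ lies in the support annulus.

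Next I would identify this expression with the Legendre-type supremum in (\ref{Wtformula}). The concave function $\alpha\mapsto 2g_t(\alpha)+\alpha\log|z|^2-2g_t(1)$ has derivative $2g_t'(\alpha)+\log|z|^2$, which vanishes precisely at $\alpha=\alpha_t(|z|)$; by concavity of $g_t$ this critical point is the global maximum on $[\alpha_{\min}^t,1]$ whenever $|z|$ lies in the support annulus. For the remaining regions I would perform a boundary analysis of the same concave function: when $|z|>r_{\mathrm{out}}(t)$ the derivative at $\alpha=1$ is positive, so the supremum is $\log|z|^2$, in agreement with the value of $V_{\sigma_t}$ outside the support; when $|z|<r_{\mathrm{in}}(t)$ and $\alpha_{\min}^t>0$, the supremum is attained at the left endpoint $\alpha_{\min}^t$ and matches the sum of the contribution of the $\delta$-mass at $0$ and the annular integral recomputed above.

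For (\ref{Vtformula}) I would use that $P_t^N(z)=z^{\lfloor Nt(a-b)\rfloor}Q_t^N(z)$ has degree $N(1+t(a-b))$, so its empirical root measure converges to $\mu_t=\frac{1}{1+t(a-b)}(\sigma_t+t(a-b)\delta_0)$. Since the log potential of $\delta_0$ equals $\log|z|^2$ and the log-potential operator is linear, $V_t=\frac{1}{1+t(a-b)}[W_t+t(a-b)\log|z|^2]$, and substituting (\ref{Wtformula}) gives (\ref{Vtformula}).

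The main delicacy is the boundary behavior, especially when $\alpha_{\min}^t>0$: one must interpret $g_t(\alpha_{\min}^t)$ via continuous extension from the right, invoke the convention $0\log 0=0$ when evaluating (\ref{gtAlpha}) at $\alpha_{\min}^t$, and ensure that the point mass at $0$ is accounted for consistently on both sides of (\ref{Wtformula}). A cleaner alternative that avoids the case analysis would be to show that both sides of (\ref{Wtformula}) are subharmonic with distributional Laplacian equal to $4\pi\sigma_t$ (directly on the left; on the right, by differentiating the Legendre transform and recognizing the result as the push-forward of uniform measure on $[0,1]$ under $r_t$) and that both behave like $\log|z|^2+o(1)$ as $|z|\to\infty$; since they would then differ by a harmonic function on $\mathbb{C}$ that vanishes at infinity, they must coincide.
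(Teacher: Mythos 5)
Your treatment of (\ref{Vtformula}) follows exactly the paper's route: the decomposition $\mu_{t}=\frac{1}{1+t(a-b)}(\sigma_{t}+t(a-b)\delta_{0})$, linearity of the log potential, and the fact that the log potential of $\delta_{0}$ is $\log|z|^{2}$. For (\ref{Wtformula}), however, you take a genuinely different path. The paper's proof is a one-line citation of \cite[Theorem 2.8]{KZ}, which already states the log potential of the limiting root measure as a Legendre-type supremum. You instead re-derive this representation from the push-forward description in Theorem~\ref{KZ.thm}: integrate out the angular variable via $\int_0^{2\pi}\log|z-re^{i\phi}|^{2}\,\tfrac{d\phi}{2\pi}=2\max(\log|z|,\log r)$, write the radial part as $2F(|z|)\log|z|+2\int_{|z|}^{\infty}\log r\,dF(r)$, change variables $r=r_{t}(\alpha)=e^{-g_{t}^{\prime}(\alpha)}$, and then recognize the resulting expression $\alpha_{t}(|z|)\log|z|^{2}+2g_{t}(\alpha_{t}(|z|))-2g_{t}(1)$ as the value of the concave objective $\alpha\mapsto 2g_{t}(\alpha)-2g_{t}(1)+\alpha\log|z|^{2}$ at its unique stationary point. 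This is correct and amounts to a self-contained re-proof of the log-potential half of \cite{KZ}, and it has the virtue of making explicit that the Legendre-transform formula and the quantile push-forward description are two views of the same object. The paper's citation route is shorter and avoids the endpoint bookkeeping you rightly flag ($|z|$ outside the support annulus, the case $\alpha_{\min}^{t}>0$, and the $0\log 0$ convention); one small point worth tightening in your version is that $g_{t}$ is only assumed concave with left derivatives, so ``the derivative vanishes exactly at $\alpha_{t}(|z|)$'' should be stated as a superdifferential condition when $g_{t}$ has corners or plateaus. Your alternative sketch via subharmonicity plus matching $\log|z|^{2}$ asymptotics at infinity is also valid and would sidestep the boundary cases entirely.
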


\begin{proof}
The formula (\ref{Wtformula}) for $W_{t}$ is a direct consequence of
\cite[Theorem 2.8]{KZ}; compare \cite[Theorem 2.2]{HHJK2}, but noting that we
use a different normalization of the log potential here. We then reduce the
formula (\ref{Vtformula}) for $V_{t}$ to the formula (\ref{Wtformula}) for
$W_{t}$ in two cases, starting with the case $a\geq b.$ Recall that $\mu_{t}$
and $\sigma_{t}$ are the limiting root distributions of $P_{t}^{N}$ and
$Q_{t}^{N},$ respectively. In the case $a\geq b,$ the quantity $\alpha_{\min
}^{t}$ equals 0 and the measure $\sigma_{t}$ will have no mass at the origin.
Then the roots of the polynomial $P_{t}^{N}(z)=z^{Nt(a-b)}Q_{t}^{N}(z)$ will
consist of the roots of $Q_{t}^{N}$ together with $Nt(a-b)$ roots at the
origin. From this observation, and keeping in mind that the degree of
$P_{t}^{N}$ is $N(1+t(a-b)),$ we easily see that
\begin{equation}
\mu_{t}=\frac{1}{1+t(a-b)}(\sigma_{t}+t(a-b)\delta_{0}). \label{muAndSigma}%
\end{equation}
Thus, the log potential of $V_{t}$ of $\mu_{t}$ is related to the log
potential $W_{t}$ of $\sigma_{t}$ by%
\begin{equation}
V_{t}(z)=\frac{1}{1+t(a-b)}(W_{t}(z)+t(a-b)\log\left\vert z\right\vert ^{2}).
\label{muSigmaLog}%
\end{equation}
The claimed result then follows by applying (\ref{Wtformula}).

In the case $a<b,$ we claim that (\ref{muAndSigma}) still holds, but with
$a-b$ now being negative. In this case, the measure $\sigma_{t}$ has mass
$t(b-a)$ at the origin. The formula (\ref{muAndSigma}) for $\mu_{t}$ is
obtained by removing this mass and then rescaling the result to be a
probability measure. Then (\ref{muSigmaLog}) follows as in the case $a\geq b.$
\end{proof}

\subsection{The PDE for the normalized log potential of $P_{t}^{N}%
$\label{LogPotPDE.sec}}

Recall that $V_{t}$ is the log potential of the limiting root measure of
$P_{t}^{N}$ and that $S_{t}$ is defined by%
\begin{equation}
S_{t}(z)=(1+t(a-b))V_{t}(z)+2g_{t}(1). \label{StDef}%
\end{equation}
Then by Proposition \ref{QNtLog.prop}, we have%
\begin{equation}
S_{t}(z)=\sup_{\alpha_{\min}^{t}\le\alpha\le1}\big(2g_{t}(\alpha)+\alpha
\log\left\vert z\right\vert ^{2}\big)+t(a-b)\log\left\vert z\right\vert ^{2} .
\label{uth}%
\end{equation}
The proof of Theorem \ref{thePDEforS.thm} will follow the argument in the
proof of Theorem 5.2 in \cite{HHJK2}, beginning with the following lemma.

\begin{lemma}
\label{fPDE.lem}For each fixed $\alpha,$ we let $\,f_{t}(\alpha,z)$ be the
function on the right-hand side of (\ref{uth}), but
without the supremum, namely%
\[
f_{t}(\alpha,z)=2g_{t}(\alpha)+(\alpha+t(a-b))\log\left\vert z\right\vert
^{2},
\]
or, explicitly,%
\begin{align*}
f_{t}(\alpha,z)  &  =g_{0}(\alpha)+\frac{2b}{a-b}\{[\alpha+t(a-b)]\log
[\alpha+t(a-b)]-\alpha\log\alpha\}-2bt\ \\
&  +(\alpha+t(a-b))\log\left\vert z\right\vert ^{2}.
\end{align*}
Then for each fixed $\alpha,$ the function $f_{t}(\alpha,z)$ satisfies the PDE
in (\ref{UtPDE}) as a function of $z.$
\end{lemma}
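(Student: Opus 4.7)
The plan is to verify the PDE by direct computation, treating $\alpha$ as a fixed parameter. The key observation is that $f_t(\alpha,z)$ depends on $z$ only through $\log|z|^2$, so it is straightforward to compute the $z$-derivative needed on the right-hand side of \eqref{UtPDE}.

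First, I will compute the spatial derivative. Since $\partial \log|z|^2/\partial z = 1/z$, we have
\begin{equation*}
\frac{\partial f_t}{\partial z}(\alpha,z) = \frac{\alpha + t(a-b)}{z},
\end{equation*}
and therefore
\begin{equation*}
z^a\left(\frac{\partial f_t}{\partial z}\right)^b = z^{a-b}\,\bigl(\alpha + t(a-b)\bigr)^b,
\end{equation*}
so that
\begin{equation*}
\log\!\left(\left|z^a\!\left(\frac{\partial f_t}{\partial z}\right)^{\!b}\right|^{2}\right) = (a-b)\log|z|^{2} + 2b\log\!\bigl(\alpha+t(a-b)\bigr).
\end{equation*}

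Next, I will compute the time derivative. Differentiating the explicit formula $f_t(\alpha,z) = 2g_t(\alpha) + (\alpha + t(a-b))\log|z|^{2}$ in $t$ gives
\begin{equation*}
\frac{\partial f_t}{\partial t} = 2\,\frac{\partial g_t}{\partial t}(\alpha) + (a-b)\log|z|^{2}.
\end{equation*}
Using the formula \eqref{gtAlpha} for $g_t$ from Theorem \ref{QNprofile.thm}, with $u := \alpha + t(a-b)$ so that $\partial u/\partial t = a-b$, one computes
\begin{equation*}
\frac{\partial g_t}{\partial t}(\alpha) = \frac{b}{a-b}(a-b)(\log u + 1) - b = b\log\!\bigl(\alpha+t(a-b)\bigr).
\end{equation*}
Substituting this in yields exactly the right-hand side above, establishing the identity when $a\neq b$.

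For the case $a=b$ (with $b>0$), the analogous direct check uses the limiting formula \eqref{gtEqual}, giving $\partial g_t/\partial t = b\log\alpha$ and $z^a(\partial f_t/\partial z)^b = \alpha^b$, so both sides of \eqref{UtPDE} equal $2b\log\alpha$. Thus the verification is a routine calculation; there is no real obstacle, the main point being simply that the logarithmic dependence of $f_t$ on $|z|^2$ and the specific coefficient $\alpha + t(a-b)$ are precisely matched to the coefficients appearing in $\partial g_t/\partial t$, making the equation work out.
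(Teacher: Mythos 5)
Your proof is correct and takes essentially the same route as the paper: you compute $\partial f_t/\partial t$ and $z\,\partial f_t/\partial z = \alpha + t(a-b)$ directly and check that the two sides of the PDE match. The only stylistic difference is that the paper inverts $z\,\partial f_t/\partial z = \alpha + t(a-b)$ to substitute for $2b\log[\alpha + t(a-b)]$ inside $\partial f_t/\partial t$, while you compute the right-hand side of the PDE and match; these are the same algebra.
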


\begin{proof}
We compute that%
\begin{equation}
\frac{\partial f_{t}(\alpha,z)}{\partial t}=2b\log[\alpha+t(a-b)]+(a-b)\log
\left\vert z\right\vert ^{2} \label{dWdT}%
\end{equation}
and%
\[
z\frac{\partial f_{t}(\alpha,z)}{\partial z}=\bar{z}\frac{\partial
f_{t}(\alpha,z)}{\partial\bar{z}}=\alpha+t(a-b).
\]
Thus,%
\[
2b\log[\alpha+t(a-b)]=b\log\left(  z\frac{\partial f_{t}(\alpha,z)}{\partial
z}\right)  +b\log\left(  \bar{z}\frac{f_{t}(\alpha,z)}{\partial\bar{z}%
}\right)  .
\]
Plugging this result back into (\ref{dWdT}) and simplifying gives the claimed result.
\end{proof}

We now prove Theorem \ref{thePDEforS.thm} under Assumption
\ref{pde.assumption}.

\begin{proof}
[Proof of Theorem \ref{thePDEforS.thm}]Under the given assumptions, the
function%
\[
(z,t)\mapsto\alpha_{t}(|z|)=\mu_{t}(D_{|z|})
\]
will be smooth, where $D_{r}$ is closed disk of radius $r$ centered at $0$.
For $0<\left\vert z\right\vert <r_{\mathrm{out}}(t),$ the supremum (actually a
maximum) in \eqref{uth} is achieved at a unique value of $\alpha\in(0,1),$
equal to $\alpha_{t}(\left\vert z\right\vert )$, see, for instance
\cite[\S 3.2]{HHJK2}. Thus,
\begin{equation}
S_{t}(z)=f_{t}(\alpha_{t}(z),z). \label{StAnswer}%
\end{equation}
Then,%

\begin{align}
\frac{\partial S_{t}}{\partial t}(z)  &  =\frac{\partial f_{t}}{\partial
\alpha}(\alpha_{t}(z),z)\frac{\partial\alpha_{t}}{\partial t}(z)+\frac
{\partial f_{t} }{\partial t}(\alpha_{t}(z),z)\nonumber\\
&  =\frac{\partial f_{t}}{\partial t}(\alpha_{t}(z),z), \label{StDiff}%
\end{align}
and
\begin{align}
\frac{\partial S_{t}}{\partial z}(z)  &  =\frac{\partial f_{t}}{\partial
\alpha}(\alpha_{t}(z),z)\frac{\partial\alpha_{t}}{\partial z}(z)+\frac
{\partial f_{t}}{\partial z}(\alpha_{t}(z),z)\nonumber\\
&  =\frac{\partial f_{t}}{\partial z}(\alpha_{t}(z),z) \label{StDiffz}%
\end{align}
because $\partial f_{t}/\partial\alpha$ vanishes at $(\alpha_{t}(z),z),$ since
this point is the maximum over $\alpha.$ The result for $0<\left\vert
z\right\vert <r_{\mathrm{out}}(t)$ then follows from Lemma \ref{fPDE.lem}.

Meanwhile, for all $\left\vert z\right\vert >r_{\mathrm{out}}(t),$ the
supremum is achieved at $\alpha_{t}(z)\equiv1$ in a neighborhood of $(z,t)$.
In that case, \eqref{StDiff} and \eqref{StDiffz} still hold, but for a
different reason: $\partial\alpha_{t}/\partial t$ and $\partial\alpha
_{t}/\partial z$ vanish.

We now note that from (\ref{StAnswer}), we can see that $S_{t}(\alpha),$
$\partial S_{t}/\partial t,$ and $\partial S_{t}/\partial z$ are continuous
across the circle of radius $r_{\mathrm{out}}(t).$ Therefore, the domains can
be glued together and the PDE can be continuously extended to $\{(z,t):z\in
\mathbb{C }\backslash\{0\} , 0\le t <t_{\max}\}$ as claimed.
\end{proof}

\subsection{The Hamilton--Jacobi analysis\label{hj.sec}}

A first-order PDE on a domain $U$ in $\mathbb{R}^{n}$ is said to be of
Hamilton--Jacobi type if it has the form
\[
\frac{\partial}{\partial t}u(\mathbf{x},t)=-H\left(  x_{1},\ldots,x_{n}%
,\frac{\partial u}{\partial x_{1}},\ldots,\frac{\partial u}{\partial x_{n}%
}\right)
\]
for some \textquotedblleft Hamiltonian\textquotedblright\ function
$H(\mathbf{x},\mathbf{p})$ on $U\times\mathbb{R}^{n}\subset\mathbb{R}^{2n}.$
We then consider Hamilton's equations with Hamiltonian $H,$ that is, the
equations%
\begin{equation}
\frac{dx_{j}}{dt}=\frac{\partial H}{\partial p_{j}}(\mathbf{x}(t),\mathbf{p}%
(t));\quad\frac{dp_{j}}{dt}=-\frac{\partial H}{\partial x_{j}}(\mathbf{x}%
(t),\mathbf{p}(t)). \label{HamEq}%
\end{equation}
The \textbf{characteristic curves} of the problem are then solutions to
Hamilton's equations with arbitrary initial position $\mathbf{x}^{0}$ and
initial momentum $\mathbf{p}^{0}$ chosen as the derivatives of the initial
condition evaluated at $\mathbf{x}^{0}$:%
\begin{equation}
p_{j}^{0}=\frac{\partial}{\partial x_{j}^{0}}u(\mathbf{x}_{0},0).
\label{HamInitial}%
\end{equation}

The point of this construction is that we obtain nice formulas for $u$ and
$\nabla_{\mathbf{x}}u$ along the characteristic curves, as follows. If
$(\mathbf{x}(t),\mathbf{p}(t))$ solves (\ref{HamEq}) with initial condition as
in (\ref{HamInitial}), then we have the first Hamilton--Jacobi formula
\begin{equation}
u(\mathbf{x}(t),t)=u(\mathbf{x}_{0},0)-H(\mathbf{x}_{0},\mathbf{p}_{0}%
)~t+\int_{0}^{t}\mathbf{p}(s)\cdot\frac{d\mathbf{x}}{ds}~ds \label{HJformula1}%
\end{equation}
and the second Hamilton--Jacobi formula%
\begin{equation}
(\nabla_{\mathbf{x}}u)(\mathbf{x}(t),t)=\mathbf{p}(t). \label{HJformula2}%
\end{equation}
See Section 3.3 of the book \cite{Evans} of Evans or the proof of Proposition
5.3 in \cite{DHK}.

The PDE in Theorem \ref{thePDEforS.thm} is of Hamilton--Jacobi type, where the
Hamiltonian $H(x,y,p_{x},p_{y})$ is read off from the right-hand side of
(\ref{UtPDE}) by replacing $\partial S_{t}/\partial x$ with $p_{x}$ and
$\partial S_{t}/\partial y$ with $p_{y},$ with an overall minus sign. We then
use the complex-variable notations%
\begin{equation}
z=x+iy;\quad p=\frac{1}{2}(p_{x}-ip_{y}), \label{ZandP}%
\end{equation}
so that $p$ corresponds to $\partial S_{t}/\partial z$ on the right-hand side
of (\ref{UtPDE}). Thus, we find that
\begin{equation}
H(z,p)=-\log(z^{a}p^{b})-\log(\bar{z}^{a}\bar{p}^b). \label{theHamiltonian}%
\end{equation}

\begin{remark}
The PDE in (\ref{UtPDE}) in Theorem \ref{thePDEforS.thm} is of
Hamilton--Jacobi type, for which the Hamiltonian (\ref{theHamiltonian}) has a
very special form, namely the real part of a holomorphic function in $z$ and
$p.$ For PDEs of this special type, one can prove a push-forward theorem by
following the proof of Theorem 8.2 in \cite[Section 8.4]{HHptrf}. The theorem
would say that the measure obtained by taking the Laplacian of $S$ pushes
forward along the characteristic curves of the equation. This line of
reasoning could give a different proof of the push-forward result for
$P_{t}^{N}$ in Corollary \ref{push.cor}. Since we already have a more direct
proof, we will not pursue this line of reasoning.
\end{remark}

\begin{proposition}
\label{charCurves.prop}In the case $a\neq b,$ we have%
\begin{equation}
z(t)=z_{0}\left(  1+\frac{t(a-b)}{z_{0}p_{0}}\right)  ^{\frac{b}{b-a}},
\label{zchar1}%
\end{equation}
while in the case $a=b,$ we have%
\begin{equation}
z(t)=z_{0}\exp\left\{  -\frac{bt}{z_{0}p_{0}}\right\}  . \label{zchar2}%
\end{equation}

\end{proposition}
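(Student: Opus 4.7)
The plan is to write down Hamilton's equations in complex coordinates for the Hamiltonian $H$ in (\ref{theHamiltonian}), observe that the product $zp$ satisfies an elementary ODE, and then integrate $\dot z / z$.

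First I would translate Hamilton's equations (\ref{HamEq}) into the complex variables (\ref{ZandP}). Using the standard change of variables $p = \tfrac{1}{2}(p_x - i p_y)$, a direct calculation shows that
\[
\frac{dz}{dt} = \frac{\partial H}{\partial p}, \qquad \frac{dp}{dt} = -\frac{\partial H}{\partial z},
\]
with analogous equations for $\bar z$ and $\bar p$. Since
\[
H(z,p) = -a\log z - b\log p - a\log \bar z - b\log \bar p,
\]
the $z$-dependence and $p$-dependence decouple from the conjugate sector, and we obtain the simple system
\[
\frac{dz}{dt} = -\frac{b}{p}, \qquad \frac{dp}{dt} = \frac{a}{z}.
\]
The conjugate equations are the complex conjugates of these.

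The key observation is that the product $zp$ evolves linearly in $t$:
\[
\frac{d(zp)}{dt} = \dot z\, p + z\, \dot p = -\frac{b}{p} \cdot p + z \cdot \frac{a}{z} = a - b.
\]
With initial data $z(0) = z_0$, $p(0) = p_0$, this integrates to
\[
z(t) p(t) = z_0 p_0 + t(a-b).
\]
Substituting back into the equation for $\dot z$ gives
\[
\frac{\dot z}{z} = -\frac{b}{z p} = -\frac{b}{z_0 p_0 + t(a-b)}.
\]
In the case $a \neq b$, integrating from $0$ to $t$ yields
\[
\log\frac{z(t)}{z_0} = \frac{b}{b-a}\Big[\log(z_0 p_0 + t(a-b)) - \log(z_0 p_0)\Big] = \frac{b}{b-a}\log\!\left(1 + \frac{t(a-b)}{z_0 p_0}\right),
\]
which, upon exponentiation, is exactly (\ref{zchar1}). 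In the case $a = b$, the product $zp$ is constant equal to $z_0 p_0$, so $\dot z / z = -b/(z_0 p_0)$ integrates directly to (\ref{zchar2}).

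There is no serious obstacle: the Hamiltonian is separable in $(z,p)$ and in $(\bar z, \bar p)$, and the conserved-up-to-linear-drift quantity $zp$ makes the system integrable in closed form. The only minor care required is in correctly translating Hamilton's equations from the real $(x,y,p_x,p_y)$ coordinates to the Wirtinger variables $(z,p)$, but this follows from a routine application of the chain rule and the convention (\ref{ZandP}).
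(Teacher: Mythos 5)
Your proof is correct and follows essentially the same route as the paper: translate Hamilton's equations into Wirtinger coordinates $(z,p)$, observe $\frac{d}{dt}(zp)=a-b$, integrate $zp$ linearly, and then integrate $\dot z/z$. (One small note: the paper's display (\ref{theHamiltonian}) has a typo omitting the exponent $b$ on $\bar p$; your version of the Hamiltonian, $H=-a\log z-b\log p-a\log\bar z-b\log\bar p$, is the correct one and matches the PDE (\ref{UtPDE}).)
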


We note that the right-hand sides of (\ref{zchar1}) and (\ref{zchar2}) match
the formulas in Idea \ref{abMotion.idea}, after identifying $m_{0}(z_{0})$
with $p_{0}.$ This identification is natural because, by (\ref{HamInitial}),
the initial momentum is simply the Cauchy transform of the initial
distribution of zeros.

\begin{proof}
We have%
\begin{align*}
\frac{dp}{dt}  &  =\frac{1}{2}\left(  \frac{dp_{x}}{dt}-i\frac{dp_{y}}%
{dt}\right) \\
&  =-\frac{1}{2}\left(  \frac{\partial H}{\partial x}-i\frac{\partial
H}{\partial y}\right) \\
&  =-\frac{\partial H}{\partial z},
\end{align*}
where $\partial H/\partial z$ is the Wirtinger derivative (or Cauchy--Riemann
operator). Thus,%
\begin{equation}
\frac{dp}{dt}=\frac{a}{z}. \label{pPrime}%
\end{equation}

Meanwhile, we have%
\begin{equation}
\frac{dz}{dt}=\frac{dx}{dt}+i\frac{dp}{dt}=\frac{\partial H}{\partial p_{x}%
}+i\frac{\partial H}{\partial p_{y}}. \label{dzHam}%
\end{equation}
The Wirtinger derivative with respect to the variable $p$ in (\ref{ZandP}) is
computed as%
\[
\frac{\partial}{\partial p}=\frac{1}{2}\left(  \frac{\partial}{\partial
\operatorname{Re}p}-i\frac{\partial}{\partial\operatorname{Im}p}\right)  ,
\]
which works out to%
\[
\frac{\partial}{\partial p}=\frac{\partial}{\partial p_{x}}+i\frac{\partial
}{\partial p_{y}}.
\]
(The reader may check, for example, that applying $\partial/\partial p$ to $p$
gives 1 and applying $\partial/\partial p$ to $\bar{p}$ gives 0.) Then
(\ref{dzHam}) becomes%
\begin{equation}
\frac{dz}{dt}=\frac{\partial H}{\partial p}=-\frac{b}{p}. \label{zPrime}%
\end{equation}

From (\ref{pPrime}) and (\ref{zPrime}), we find that
\[
\frac{d}{dt}(zp)=a-b
\]
so that
\[
z(t)p(t)=z_{0}p_{0}+t(a-b).
\]
Then%
\[
\frac{dz}{dt}=-\frac{bz}{pz}=-z(t)\frac{b}{z_{0}p_{0}+t(a-b)}.
\]
This equation is separable and we can integrate it to%
\begin{equation}
\log z(t)-\log z_{0}=-\frac{b}{a-b}(\log(z_{0}p_{0}+t(a-b))-\log(z_{0}p_{0})),
\label{logZt}%
\end{equation}
which simplifies to the claimed expression.
\end{proof}

We remark that in the case $a=0$ and $b=1,$ the quantity $p(t)$ is independent
of $t$ by (\ref{pPrime}), so that (\ref{zPrime}) becomes (with $b=1$)
\begin{equation}
\frac{dz}{dt}=-\frac{1}{p_{0}}=\frac{1}{\frac{\partial S}{\partial z}%
(z_{0},0)} \label{dzRepeatedDiff}%
\end{equation}
as in (\ref{dz2}).

\section{The case $b<0$\label{bLessZero.sec}}

If $b<0,$ Theorem \ref{QNprofile.thm} still holds except when $a=b$. But the
exponential profile $g_{t}$ may not be concave on $(\alpha_{\min}^{t},1].$
Indeed, if $g_{0}=0$ (the case of the Kac polynomials), $g_{t}$ will be
\textit{convex} whenever $b<0.$ Nevertheless, Example \ref{LOstable.example}
is still valid when $b<0,$ as long as $b/(b-a)>0.$ Thus, we have
concavity---and therefore the push-forward results in Theorem \ref{push.thm}
and Corollary \ref{push.cor}---when $g_{0}$ is the exponential profile of a
Littlewood--Offord polynomial with parameter $\beta=b/(b-a)>0,$ even if $b<0.$

We note that Theorem \ref{KZ.thm} holds when the exponential profile is not
concave, provided that the exponential profile $g_{t}$ is replaced by its
concave majorant $G_{t},$ that is, the smallest concave function that is
everywhere greater than or equal to $g_{t}.$

In this section, we focus on the case of repeated integration, that is, $a=0$
and $b=-1.$ Then the exponential profile for $Q_{t}^{N}$ (which is obtained
from $P_{t}^{N}$ by stripping out the zeros at the origin) is%
\[
g_{t}(\alpha)=g_{0}(\alpha)-\{[\alpha+t]\log[\alpha+t]-\alpha\log\alpha\}-t,
\]
where we repeat that $g_{t}$ may or may not be concave, depending on the
choice of $g_{0}.$

\subsection{Singular behavior: the Kac case}

We begin by considering the case of the Kac polynomials, which correspond to
$g_{0}\equiv0.$ We focus on the polynomial $Q_{t}^{N},$ which is obtained from
$P_{t}^{N}$ by stripping away the uninteresting zeros at the origin. The
exponential profile of $Q_{t}^{N}$ with $a=0$ and $b=-1$ with $g_{0}\equiv0$
is, by Theorem \ref{QNprofile.thm},%
\[
g_{t}(\alpha)=-(\alpha+t)\log(\alpha+t)+\alpha\log\alpha+t,
\]
which is concave on $(0,1).$ The graph of the concave majorant of this
function is a straight line with slope%
\[
M_{t}=g_{t}(1)-g_{t}(0)=t\log t-(1+t)\log(1+t).
\]
Thus, the limiting root distribution of $Q_{t}^{N}$ will be concentrated
entirely on the ring of radius%
\[
e^{-M_{t}}=\frac{(1+t)^{1+t}}{t^{t}}=(1+t)\left(  1+\frac{1}{t}\right)  ^{t}.
\]
When $t$ is large, $e^{-M_{t}}\approx(1+t)e.$

\subsection{Singular behavior: the Weyl polynomial case}

As our next example, we consider the Weyl polynomials, which correspond to%
\[
g_{0}(\alpha)=-\frac{1}{2}(\alpha\log\alpha-\alpha).
\]
Then we compute%
\begin{align*}
g_{t}(\alpha)  &  =-(\alpha+t)\log(\alpha+t)+\frac{1}{2}\alpha\log\alpha+t\\
g_{t}^{\prime\prime}(\alpha)  &  =\frac{1}{2\alpha}-\frac{1}{\alpha+t},
\end{align*}
so that $g_{t}^{\prime\prime}(\alpha)$ is positive for $\alpha<t$ and negative
for $\alpha>t.$ Then for $t<1,$ we have a mix of convex and concave behavior
as in Figure \ref{nonconcave.fig}, while for $t\geq1$, we have that $g_{t}$ is convex.%

\begin{figure}[ptb]%
\centering
\includegraphics[
height=2.3091in,
width=3.7775in
]%
{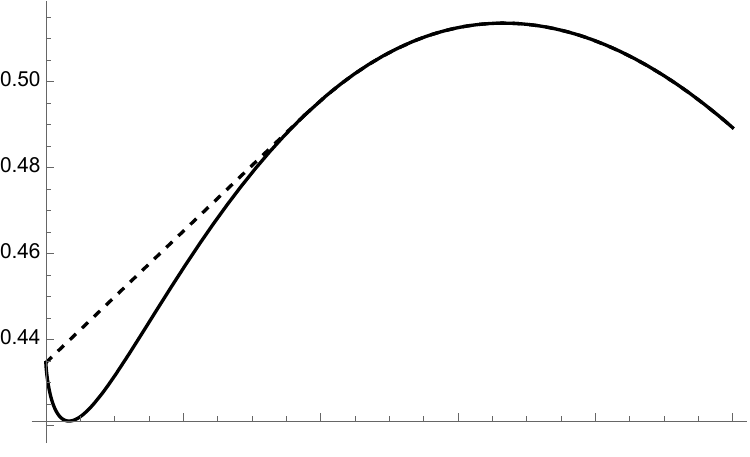}%
\caption{The exponential profile (solid) and its concave majorant (dashed) in
the Weyl case, for $t=0.15$}%
\label{nonconcave.fig}%
\end{figure}

Note that in the Weyl case, the function $\alpha_{0}(r)$ equals $r^{2},$ so
that with $a=0$ and $b=-1,$ the transport map in Definition \ref{Tt.def} takes
the form%
\[
T_{t}(re^{i\theta})=e^{i\theta}\left(  r+\frac{t}{r}\right)  .
\]
The singular behavior observed in this case can be attributed to the fact that
the magnitude of $T_{t}(re^{i\theta})$ is not an increasing function of $r.$
Thus, smaller roots can overtake larger roots and mass can accumulate on a circle.

We now compute the concave majorant $G_{t}$ of $g_{t}.$ If $t$ is small
enough, there will be a number $\alpha_{\mathrm{crit}}(t)\in(0,1)$ such that
$G_{t}$ is linear for $0\leq\alpha\leq\alpha_{\mathrm{crit}}(t)$ and will
agree with $g_{t}(\alpha)$ for $\alpha_{\mathrm{crit}}(t)<\alpha\leq1.$ (See,
again, Figure \ref{nonconcave.fig}.) For larger values of $t$, $G_{t}$ will be
linear on all of $[0,1].$ The precise range of $t$ for which a valid
$\alpha_{\mathrm{crit}}(t)$ exists will emerge from the calculation below. But
we note that if $t\geq1$ then $g_{t}$ is convex and thus the concave majorant
is certainly linear in this case.

The number $\alpha_{\mathrm{crit}}$ (if it exists) should be such that the
tangent line to the graph of $g_{t}$ at $\alpha_{\mathrm{crit}}$ hits the
$y$-axis at a height equal to $g_{t}(0)$:%
\[
\frac{g_{t}(\alpha_{\mathrm{crit}})-g_{t}(0)}{\alpha_{\mathrm{crit}}}%
=g_{t}^{\prime}(\alpha_{\mathrm{crit}})
\]
or, explicitly,%
\begin{equation}
\frac{\alpha_{\mathrm{crit}}+2t\log t-2t\log(\alpha_{\mathrm{crit}}%
+t)}{2\alpha_{\mathrm{crit}}}=0. \label{alphaCritCondition}%
\end{equation}
The condition (\ref{alphaCritCondition}) can be simplified to%
\[
\frac{\alpha_{\mathrm{crit}}}{t}=2\log\left(  1+\frac{\alpha_{\mathrm{crit}}%
}{t}\right)  ,
\]
so that
\[
\alpha_{\mathrm{crit}}(t)=tx,
\]
where $x$ is the unique positive solution to
\[
x=2\log(1+x),
\]
namely%
\[
x\approx2.513.
\]

If
\[
t<t_{\mathrm{crit}}:=\frac{1}{x}\approx0.3979,
\]
the value of $\alpha_{\mathrm{crit}}(t)$ will be less than 1. In this case, we
have may compute that%
\begin{align*}
e^{-G_{t}^{\prime}(\alpha)}  &  =e^{-G_{t}^{\prime}(\alpha_{\mathrm{crit}%
}(t))}\\
&  =\sqrt{t}\frac{1+x}{\sqrt{x}}\\
&  \approx2.216\sqrt{t}%
\end{align*}
for all $\alpha<\alpha_{\mathrm{crit}}(t).$ In this case, the limiting root
distribution $\sigma_{t}$ will have mass $\alpha_{\mathrm{crit}}=tx$ on the
circle of radius$\sqrt{t}(1+x)/\sqrt{x}$ while $\sigma_{t}$ will be absolutely
continuous outside this circle. See Figure \ref{intweyl.fig} in Section
\ref{flowFractional.sec}. (The figure shows the limiting root distribution of
$P_{t}^{N}$ rather than $Q_{t}^{N},$ so that there are roots at the origin.)

For $t\geq t_{\mathrm{crit}},$ no valid $\alpha_{\mathrm{crit}}(t)$ exists and
the concave majorant is simply linear. In this case, all of the mass of
$\sigma_{t}$ is concentrated on a single circle.

\subsection{Nonsingular behavior: the exponential polynomial
case\label{IntegrateExp.sec}}

We now take $g_{0}$ to be the exponential profile of the Littlewood--Offord
polynomials:
\[
g_{0}(\alpha)=-\beta(\alpha\log\alpha-\alpha).
\]
If $g_{t}$ is the exponential profile of $Q_{t}^{N}$ as in Theorem
\ref{QNprofile.thm} with $a=0$ and $b=-1$, we may easily compute that%
\[
g_{t}^{\prime\prime}(\alpha)=\frac{t(1-\beta)-\alpha\beta}{\alpha(t+\alpha)},
\]
with $\alpha_{\min}^{t}=0.$ When $\beta\geq1,$ we see that $g_{t}%
^{\prime\prime}(\alpha)\leq0$ for all $0\leq\alpha\leq1.$ Thus, we get a
concave exponential profile when performing repeated integration of
Littlewood--Offord polynomials with $\beta\geq1.$

We now focus our attention on the case of the exponential polynomials, that
is, the Littlewood--Offord polynomials with $\beta=1.$ The limiting root
distribution $\mu_{0}$ of the exponential polynomials is easily computed using
Theorem \ref{KZ.thm} and $\mu_{0}$ assigns mass $r$ to the disk of radius $r,$
for all $0\leq r\leq1.$ In that case, Example \ref{LOstable.example} with
$a=0$ and $b=-1$ applies. The limiting root distribution $\mu_{t}$ of
$P_{t}^{N}$ will have mass $t$ at the origin. The rest of the mass of $\mu
_{t}$ will be in an annulus having inner and outer radii given by%
\[
r_{\mathrm{in}}(t)=t;\quad r_{\mathrm{out}}(t)=1+t.
\]
In this annulus, $\mu_{t}$ will agree with the limiting root distribution of
the original exponential polynomial, dilated by a factor of $1+t.$ Explicitly,
$\mu_{t}$ has mass~$t/(1+t)$ at the origin and then $\mu_{t}$ assigns mass
$r/(1+t)$ to the disk of radius $r,$ for all $t\leq r\leq1+t.$ See Figure
\ref{integratedexp.fig}.%

\begin{figure}[ptb]%
\centering
\includegraphics[
height=2.5218in,
width=5.0687in
]%
{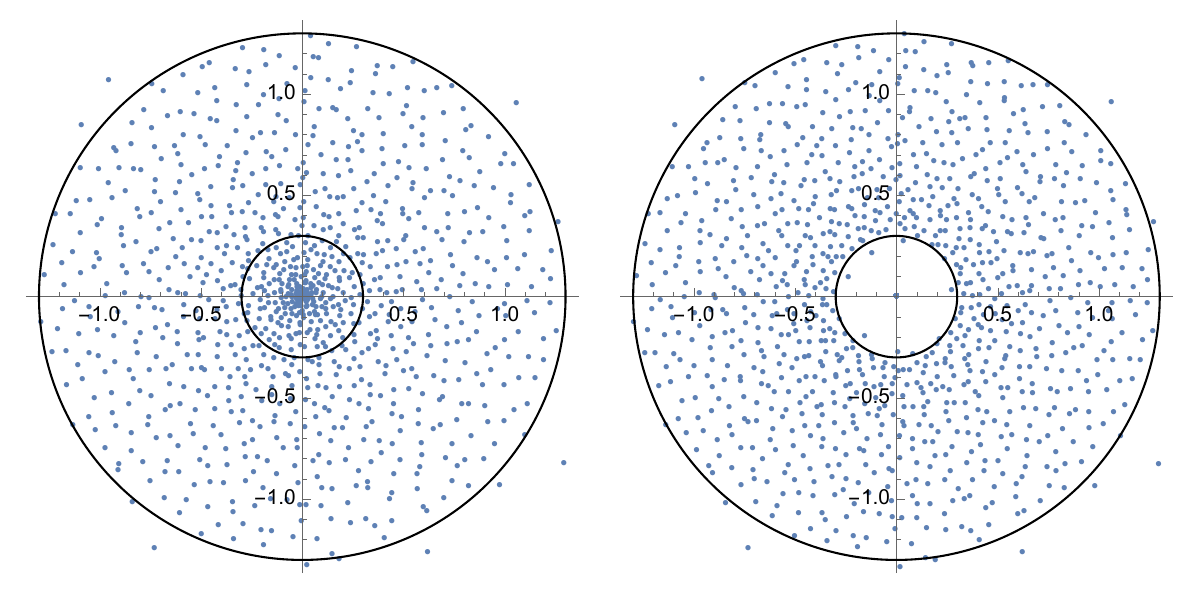}%
\caption{The roots of the original exponential polynomial, dilated by a factor
of $1+t$ (left), and the roots of $P_{t}^{N}$ (right). The inner circle has
radius $t.$ Shown for $t=0.3$ and $N=1,000.$}%
\label{integratedexp.fig}%
\end{figure}

The push-foward result in Theorem \ref{push.thm} applies in this case. Since
$\alpha_{0}(r)=r$ for the exponential polynomials, we get
\[
T_{t}(re^{i\theta})=e^{i\theta}\left(  r+t\right)  .
\]
We interpret the push-forward result to mean that each nonzero root moves
radially outward with constant speed equal to 1, which accounts for the
formulas for the inner and outer radii. Of course, roots of $P_{t}^{N}$ are
also being created at the origin. (By contrast, for repeated
\textit{differentiation} of the exponential polynomials, the roots move
radially \textit{inward} with speed 1, until they hit the origin.)

\appendix{}

\section{Formal derivation Idea \ref{radialMotion.idea}\label{PDEderiv.appendix}}

In this appendix, we give a heuristic argument for Idea \ref{radialMotion.idea}, namely that the roots move with constant velocity along curves $z(t)$ of the form \eqref{ztGen}. As discussed after the statement of Idea \ref{radialMotion.idea}, for this claim to be consistent with Idea \ref{singleDeriv.idea}, the Cauchy transform $m_t$, when evaluated along $z(t)$, must remain constant. 

The first step in the argument will be a heuristic derivation of a PDE for the log potential of the limiting root distribution. This PDE can then 
be analyzed by the method of characteristics and the characteristic curves turn out to be precisely the curves $z(t)$ in Idea \ref{radialMotion.idea}. 
Furthermore, the method of characteristics will tell us that the Cauchy transform (i.e., the $z$-derivative of the log potential) remains constant along these curves. At this point, it will be apparent that Idea \ref{radialMotion.idea} follows from Idea \ref{singleDeriv.idea}. 

The argument does not use the assumption that 
the initial root distribution is radial. Thus, at the heuristic level used in this appendix, it suffices to assume that we start with
a sequence of polynomials $P_0^N$ whose limiting root distribution is a compactly supported (but not necessarily radial) probability 
measure $\mu$. If $\mu_t$ is the limiting root distribution of $P_t^N$, then the argument below is valid except where the Cauchy transform 
of $\mu_t$ is zero. We then postulate that the roots die when they reach such a point. 

We define $P_{t}^{N}(z)$ as the $\lceil Nt\rceil$-th derivative of $P_{0}^{N},$
scaled by a convenient constant:%
\[
P_{t}^{N}(z)=\frac{1}{N^{Nt}}\left(  \frac{d}{dz}\right)  ^{\lceil Nt\rceil
}P_{0}^{N}(z).
\]
The constant is chosen so that the coefficients at time $t$ will have an asymptotic
behavior similar to that of the initial coefficients. 

\textbf{Motion of the zeros}. We expect (as in Idea \ref{singleDeriv.idea}) that a
single derivative will shift each root of $P_{t}^{N}$ by the negative
reciprocal of $m_{t},$ divided by the degree of $P_{t}^{N}$ --- which is
approximately $N(1-t).$ Now, applying a single derivative amounts to a change
in the time variable of $\Delta t=1/N.$ Thus, the zeros of $P_{t}^{N}$ should
be evolving approximately along curves $z(t)$ satisfying%
\[
\frac{\Delta z}{\Delta t}\approx\frac{-\frac{1}{N(1-t)m_{t}(z)}}{\frac{1}{N}}%
\]
or%
\begin{equation}
\frac{dz}{dt}\approx-\frac{1}{(1-t)m_{t}(z(t))}. \label{zeroMotion}%
\end{equation}

\textbf{The PDE\ for the log potential}. We define the log
potential $S^{N}(z,t)$ of $P_{t}^{N}(z)$ as%
\begin{equation}
S^{N}(z,t)=\frac{1}{N}\log\left(\left\vert P_{t}^{N}(z)\right\vert ^{2}\right)
.\label{logPotFromP}
\end{equation}
Here we intentionally divide by $N,$ the degree of the \textit{original} polynomial,
rather than by the degree of the polynomial at time $t.$ If the coefficient of
the highest-degree term in $P_{t}^{N}(z)$ is $a,$ then we have%
\begin{equation}
S^{N}(z,t)=\frac{1}{N}\sum_{j=1}^{(1-t)N}\log\left(\left\vert z-z_{j}(t)\right\vert^2\right)
+\frac{1}{N}\log\left\vert a\right\vert ^{2},\label{SNnorm}%
\end{equation}
where $\{z_{j}(t)\}_{j=1}^{(1-t)N}$ are the roots of $P_{t}^{N}(z).$ 

If $\mu_{t}$ is the limiting root distribution of $P_{t}^{N}(z),$ we expect to
recover $\mu_{t}$ from the large-$N$ limit $S$ of $S^{N}$ by an application of
the Laplace operator $\Delta$, with an extra factor of $1/(1-t)$ to account for the ``incorrect'' scaling in \eqref{logPotFromP}:
\[
\mu_{t}=\frac{1}{1-t}\frac{1}{4\pi}\Delta S^{N}(z,t).
\]
Thus, the limiting Cauchy transform will be%
\begin{equation}
m_{t}=\frac{1}{1-t}\frac{\partial S}{\partial z}.\label{mtFromS}%
\end{equation}

We now argue heuristically for the the following PDE for the large-$N$
limit $S$ of $S^{N}$:%
\begin{equation}
\frac{\partial S}{\partial t}=\log\left(  \left\vert \frac{\partial
S}{\partial z}\right\vert ^{2}\right)  , \label{diffPDE}%
\end{equation}
away from the origin. In the case of polynomials with real roots, a related
PDE (for the Cauchy transform of the limiting root distribution) was obtained
by Shlyakhtenko and Tao \cite[Eq. (1.18)]{STao}. We will verify the PDE
(\ref{diffPDE}) rigorously, but in a less direct way, in Section \ref{PDE.sec}.

We start by computing the (Wirtinger) derivative $\partial/\partial z$ of $S^N$ with respect to $z$ from \eqref{logPotFromP}. Since $\partial/\partial z$ treats $\bar P^N_t$ as a constant, we obtain:
\[
\frac{\partial S^{N}}{\partial z}=\frac{1}{N}\frac{\partial P_{t}^{N}/\partial z}{P_{t}^{N}(z)}
=\frac{1}{N}\frac{D^{\lceil Nt\rceil+1}P_{0}^{N}(z)}{D^{\lceil Nt\rceil}P_{0}^{N}(z)}.
\]
Then to approximate the $t$-derivative, we use a time-interval of $1/N,$ which
amounts to taking one additional derivative:
\begin{align*}
\frac{\partial S^{N}}{\partial t} & \approx\frac{1}{1/N}\frac{1}{N}\left(  \log\left(  \left\vert \frac{1}{N^{Nt+1}
}D^{\lceil Nt\rceil+1}P_{0}^{N}(z)\right\vert ^{2}\right)  -\log\left(
\left\vert \frac{1}{N^{Nt}}D^{\lceil Nt\rceil}P_{0}^{N}(z)\right\vert
^{2}\right)  \right)  \\
& =-2\log N+\log\left(  \left\vert \frac{D^{\lceil Nt\rceil+1}P_{0}^{N}%
(z)}{D^{\lceil Nt\rceil}P_{0}^{N}(z)}\right\vert ^{2}\right)  \\
& =\log\left(  \left\vert \frac{\partial S^{N}}{\partial z} \right\vert ^{2}\right),
\end{align*}
as claimed.

\textbf{Motion along the characteristic curves}.
Now, the PDE (\ref{diffPDE}) can be solved by the Hamilton--Jacobi method,
which is a form of the method of characteristics, as follows. (Details are
given in Section \ref{hj.sec}.) In the case at hand, the characteristic curves
$z_{\mathrm{char}}(t)$ are the solutions to
\begin{equation}
\frac{dz_{\mathrm{char}}}{dt}=-\frac{1}{\frac{\partial S}{\partial z}%
(z_{0},0)}, \label{dz2}%
\end{equation}
namely,%
\begin{equation}
z_{\mathrm{char}}(t)=z_{0}-\frac{t}{\frac{\partial S}{\partial z}(z_{0},0)}.
\label{zOfT}%
\end{equation}
Since (\ref{diffPDE}) is a constant-coefficient equation, the second
Hamilton--Jacobi equation says that $\partial S/\partial z$ is constant along
these curves:%
\begin{equation}
\frac{\partial S}{\partial z}(z_{\mathrm{char}}(t),t)=\frac{\partial
S}{\partial z}(z_{0},0). \label{mEqualsM}%
\end{equation}
Thus, (\ref{dz2}) can be rewritten as%
\begin{equation}
\frac{dz_{\mathrm{char}}}{dt}=-\frac{1}{\frac{\partial S}{\partial
z}(z_{\mathrm{char}}(t),t)}. \label{dz3}%
\end{equation}
But (\ref{dz3}) is, in light of (\ref{mtFromS}), precisely the equation we
proposed for the evolution of the zeros of the polynomial in (\ref{zeroMotion}%
). Thus, the zeros should move along the characteristic curves --- which are
the straight-line curves in (\ref{zOfT}). In the radial case, these curves will move
radially inward with constant speed, as in Idea \ref{radialMotion.idea}.

\subsection*{Acknowledgments}

BH is supported in part by a grant from the Simons Foundation. CH is supported
in part by the MoST grant 111-2115-M-001-011-MY3. JJ and ZK are funded by the
Deutsche Forschungsgemeinschaft (DFG, German Research Foundation) under
Germany's Excellence Strategy EXC 2044 - 390685587, Mathematics M\"unster:
\emph{Dynamics-Geometry-Structure} and have been supported by the DFG priority
program SPP 2265 \emph{Random Geometric Systems}.

The authors are grateful for useful conversations with Hari Bercovici, Andrew Campbell, and
David Renfrew. We also thank the referee for a very careful reading of the paper and for making numerous
helpful suggestions.

\end{document}